\renewcommand{\l}{\left[}
\renewcommand{\r}{\right] }
\theoremstyle{plain}
\newtheorem{teo}{Theorem}[section]
\newtheorem{coro}[teo]{Corollary}
\newtheorem{defin}[teo]{Definition}
\newtheorem{lema}[teo]{Lemma}
\newtheorem{prop}[teo]{Proposition}
\newtheorem{rem}[teo]{Remark}
\newtheorem{nota}[teo]{Notation}
\newtheorem{slema}[teo]{Sub-lemma}
\newtheorem{hyp}[teo]{Hypothesis}
\newcommand{\N}{\mathbb{N}}
\newcommand{\Z}{\mathbb{Z}}
\newcommand{\R}{\mathbb{R}}
\DeclareMathOperator{\Id}{Id}
\newcommand{\Ch}{\mathbf{C(k)}}
\newcommand{\dg}{\mathbf{dg-cat}}
\newcommand{\sS}{\mathbf{sSet}}
\newcommand{\Set}{\mathbf{Set}}
\newcommand{\sC}{\mathbf{sCat}}
\newcommand{\Fact}{\mathbf{Fact}}
\newcommand{\Free}{\mathcal{L}}
\newcommand{\FreeS}{c\mathcal{L}_\S}
\newcommand{\sAb}{\mathbf{sAb(k)}}
\newcommand{\D}{\Delta_k}
\newcommand{\0}{\emptyset}
\newcommand{\dgS}{\mathbf{dg-Segal}}
\newcommand{\dgSc}{\mathbf{dg-Segal_c}}
\renewcommand{\k}{\underline{k}}
\renewcommand{\S}{\mathbb{S}}
\DeclareMathOperator{\Ob}{Obj}
\DeclareMathOperator{\Fun}{Fun}
\DeclareMathOperator{\Map}{Map}
\DeclareMathOperator{\Ho}{Ho}
\newcommand{\op}{^{op}}
\DeclareMathOperator{\Hom}{Hom}
\renewcommand{\O}{\mathcal{O}}
\DeclareMathOperator{\map}{map}
\DeclareMathOperator{\colim}{colim}
\renewcommand{\L}{\mathbb{L}}
\DeclareMathOperator{\hocolim}{hocolim}
\DeclareMathOperator{\holim}{holim}
\DeclareMathOperator{\Alg}{Alg}
\renewcommand{\Re}{\operatorname{Re}}
\DeclareMathOperator{\Sing}{Sing}
\DeclareMathOperator{\sk}{sk}
\DeclareMathOperator{\cosk}{cosk}
\author{Elena Dimitriadis Bermejo}
\email{dimitriadis-bermejo.math@posteo.net}
\urladdr{\url{https://www.math.univ-toulouse.fr/~edimitri/index_en}}
\title{A new model of dg-categories}
\begin{document}

\frontmatter

\begin{abstract}
In this article, we develop a new model for the category of dg-categories. Following Rezk's example in the case of classic Segal spaces, we define dg-Segal spaces: functors between free dg-categories of finite type and simplicial spaces to which we add certain properties. We define also complete dg-Segal spaces, and make their relationship to classic Segal spaces explicit. With the help of two new hypercover constructions, and up to a certain hypothesis, we prove that there exists an equivalence between the homotopy category of dg-categories and the homotopy category of functors defined above with a model structure making the complete dg-Segal spaces into its fibrant objects. 
\end{abstract}

\begin{altabstract}Dans cet article, on développe un nouveau modèle de la catégorie des dg-catégories. En suivant l'exemple de Rezk dans le cas des espaces de Segal classiques, on définit des espaces de dg-Segal: c'est-à-dire, des foncteurs entre les dg-catégories libres de type fini et les espaces simpliciaux, auxquels on ajoute certaines conditions. On définit aussi des espaces de dg-Segal complets, et on explicite leur relation avec les espaces de Segal classiques. Avec l'aide de deux nouvelles constructions d'hyperrecouvrement, et en acceptant une certaine hypothèse, on prouve qu'il existe une équivalence entre la catégorie homotopique des dg-catégories et la catégorie homotopique des foncteurs définis auparavant, avec une structure de modèles qui fait des espaces de dg-Segal complets ses objets fibrants.
\end{altabstract}

\subjclass{18G35, 18A22, 18A25, 18D20, 18N40, 18N50, 18N55, 18N60, 14F08,  55P60}

\keywords{dg-categories, hypercovers, alternate models for dg-categories, complete Segal spaces, model categories, simplicial functor categories, free dg-categories, dg-categories of finite type}
\altkeywords{dg-catégories, hyperrecouvrements, modèles alternatifs pour des dg-catégories, espaces de Segal complets, catégories de modèles, catégories de foncteurs simpliciaux, dg-catégories libres, dg-catégories de type fini}

\date{}

\maketitle
\tableofcontents

\mainmatter

\section{Introduction}

\subsection{A historical perspective}

Dg-categories (i.e. categories enriched over cochain complexes) have been extremely useful over the years in Algebraic Geometry. Indeed, although classical categories are enough for Algebraic Topology, Algebraic Geometry deals with a notion of linearity that does not gel well with those. As such, authors in this field have turned to dg-categories as their generalization of derived categories. But as useful as these are, they are not without issues. In particular, one glaring problem with dg-categories is the fact that dg-categories have a model structure and a monoidal structure, but the two of them are not compatible.

People working on dg-categories are not the first ones to encounter that problem, though. Indeed, authors on $\infty$-categories found themselves dealing with a similar issue in the 80s: simplicial categories, one of the first models for those, also have a model structure that isn't compatible with its monoidal structure. In order to solve that, other models were suggested over the years, each with their own advantages and disadvantages, and were proven to be all Quillen equivalent. Four models, in particular, are the most useful and the most used: simplicial categories, quasi-categories, Segal categories and complete Segal spaces. In recent years, authors have turned repeatedly to those for inspiration for new models for dg-categories.

 In 2013, Cohn proved in \cite{COHN} that there is an equivalence between the underlying $\infty$-category of the model category of dg-categories localized at the Morita equivalences, and the $\infty$-category of small idempotent-complete $k$-linear stable quasi-categories. In 2015, Gepner and Haugseng defined in \cite{Gepner-enriched-quasi} quasi-categories enriched over a "nice" Cartesian category $V$, and a particular case of Haugseng's results states that dg-categories are equivalent (as a quasi-category) to quasi-categories enriched in the derived quasi-category of abelian groups. On another attempt in the same direction, Mertens constructed in 2022  a version of enriched quasi-categories, and used it to construct a linear version of the classical dg-nerve functor (see \cite{Arne-dg-quasi-cat} and \cite{Arne-dg-nerve}). In this case, the model structure and the proof of them being Quillen equivalent to dg-categories are still in progress. On the Segal category side of things, in 2020 Bacard defined in \cite{Bacard-enriched-Segal-cat} a concept of enriched Segal categories, but as with Mertens, the model structure and the equivalence seem to be a future project.

\begin{center}
\begin{tabular}{|c|c|}
\hline
simplicial categories  & dg-categories \\
\hline
quasi-categories \cite{Boardman-Vogt-quasicat} & $k$-linear quasi-categories \cite{COHN} \\
						& enriched quasi-categories \cite{Gepner-enriched-quasi}\\
                 &  dg-quasi-categories \cite{Arne-dg-quasi-cat} \\
\hline
Segal categories \cite{Dwyer-Kan-Smith} & Segal dg-categories \cite{Bacard-enriched-Segal-cat} \\
\hline
complete Segal spaces \cite{ComSegalSpacesREZK} & ??\\
\hline
\end{tabular}
\end{center}

There isn't, though, any enriched version of complete Segal spaces, and also none of the "models" we have presented here have been proven to be Quillen equivalent to dg-categories yet. These are the issues we intend to tackle in this article: indeed, we will define a concept of complete dg-Segal space and prove that the model category of complete dg-Segal spaces is Quillen equivalent to that of dg-categories with Tabuada's model structure. Unfortunately, that proof is conditional to a certain result (hypothesis \ref{Hyp: Re-zk}) that we have been unable to solve for now. We hope to make progress on this in future work.

\subsection{Motivation}

Why complete dg-Segal spaces, though? There are two main reasons.

On one hand, as in the case of complete Segal spaces, our definition of complete dg-Segal spaces will be done as a subcategory of a category of functors, in this case simplicial functors from free dg-categories of finite type into simplicial sets. It is well known that categories of functors are generally relatively well behaved: for example, the model structure that is so complicated to find in Mertens or Bacard's case will be deduced in a semi-straighforward manner from the one on simplicial sets in our case. But that is not all. To quote Dugger in \cite{Dugger}, a subcategory of a diagram category is "a kind of presentation by generators and relations", and having such a presentation makes the task of defining functors from dg-categories much easier. Indeed, instead of needing to define it over all dg-categories, it would now suffice to define it over free dg-categories of finite type and making sure the "relations" are sent to weak equivalences.

This approach to complete Segal spaces as a kind of presentation by generators and relations is not new: indeed, in 2005 Toën used the complete Segal space model for $\infty$-categories in \cite{TOEN-infini} to prove that the group of automorphisms on $\infty$-categories was $\Z/2\Z$. This result would be generalized in 2021 by Barwick and Schommer-Pries in \cite{infinity-n-cat}, where they proved that the group of automorphisms on $(\infty, n)$-categories is $(\Z/2\Z)^n$ for all $n\in\N^*$. In consequence, it is reasonable to expect that the construction of dg-categories as complete dg-Segal spaces would make it easier to compute the automorphisms of dg-categories.

On the other hand, in the last section we have pointed to the fact that dg-categories have a model structure and a monoidal structure, but that the two of them are not compatible. Indeed, the object $\D(1,0,1)$ (i.e., the dg-category with two objects and $k$ as the complex of morphisms between the two) is cofibrant in the model category of dg-categories, but it is easy to prove that $\D(1,0,1)\otimes\D(1,0,1)$ is not. This is a significant hurdle in the use of dg-categories, as important classic category theory theorems, like the Barr-Beck theorem, can not only not be proven, but cannot even be stated properly in the linear case. Unfortunately, our first and most obvious attempt to find a product that is compatible with the model structure in the complete dg-Segal case, the convolution product, gets thwarted by the fact that free dg-categories are not closed for the product of dg-categories; but there is still reason to believe that finding such a product will be easier in the context of complete dg-Segal spaces, as once again functor categories tend to be better behaved.

\subsection{Main results and definitions}

Let $k$ be a commutative ring. We denote by $\Ch$ the category of cochain complexes, by $c\Free$ the category of cofibrant free dg-categories of finite type, and by $\FreeS$ the full subcategory of the simplicial localization of dg-categories by the weak equivalences that consists only of the objects in $c\Free$. Then, in Theorem \ref{Th: construction Sing_w} we can define a chain of Quillen adjunctions of the form
$$\Re:\Fun^\mathbb{S} (\FreeS\op, \sS)\rightleftharpoons \cdots\rightleftharpoons\dg:\Sing,$$
that can be derived into a single adjunction
$$\Ho(\Fun^\mathbb{S} (\FreeS^{op}, \sS))\rightleftharpoons \Ho(\dg). $$

Our main objective in this paper will be to find the subcategory of $\Fun^\mathbb{S} (\FreeS\op, \sS)$ such that $\Sing$ is an equivalence; that subcategory will be given by our definition of a complete dg-Segal space. In order to get that one, though, we will first need to define dg-Segal spaces. We denote by $Gr(\Ch)$ the category of graphs on cochain complexes, i.e. a graph whose edges are cochain complexes.

\begin{defin}[\ref{Def: dg-Segal spaces}] Let $F\in\Fun^\S(\FreeS\op,\sS)$ be a simplicial functor from cofibrant free dg-categories of finite type to simplicial sets. We say that $F$ satisfies the \textbf{dg-Segal conditions} if:
\begin{enumerate}
	\item For all $L, K\in\FreeS$, $F(L\coprod K)\to F(L)\times F(K)$ is a weak equivalence. 
	\item The image of the initial object is a point, i.e. $F(\emptyset)\simeq *$.
	\item Let $G$ be a graph in $Gr(\Ch)$ and $x,y\in\Ob(G)$. For all $\alpha\in Z^n(G(x,y))$, the image of the free dg-category issued from $G[\cancel{\alpha}]$ is a homotopy pullback in $\sS$ of the following form:
	\begin{center}
		\begin{tikzcd}
			F(L(G[\cancel{\alpha}]))\ar[r]\ar[d]\arrow[dr,phantom, "\ulcorner^h", very near start]& F(L(G))\ar[d]\\
			F(\D^c(1,n,1))\ar[r]& F(\D(1,n,1))
		\end{tikzcd}
	\end{center}	
	where $L$ is the free functor from graphs to dg-categories, $\D^c(1,n,1)=L(k^c[n])$ and $\D(1,n,1)=L(k[n])$.
\end{enumerate}
	We denote the full subcategory of $F\in\Fun^\S(\FreeS\op,\sS)$ that satisfy the dg-Segal conditions by $\mathbf{dg-Segal}$ and call its objects \textbf{dg-Segal spaces}.
\end{defin}

But we cannot understand that definition on its own: what is $G[\cancel{\alpha}]$? Intuitively, if $G$ is a graph over cochain complexes and $\alpha$ is a cocycle in degree $n$, $G[\cancel{\alpha}]$ is a graph with the same objects and the same morphisms, except that we add an element $\beta$ such that $d(\beta)=\alpha$.

\begin{defin}[\ref{Def: G alpha}]Let $G\in Gr(\Ch)$ be a graph on the category of complexes, $x,y\in \Ob(G)$ two objects in $G$, and $\alpha\in Z^n(G(x,y))$ a cycle in $G(x,y)$. We define the graph $G[\cancel{\alpha}]$ to be a pushout in the category of graphs over the morphism $k[n]\to k^c[n]$, where $k[n]$ is the graph with two objects and the morphism complex between the two is a complex concentrated in degree $n$, where it is $k$; and $k^c[n]$ is the graph with two objects and the complex between the two which is always zero except for the degrees $n-1$ and $n$, where it is $k$.
\begin{center}
	\begin{tikzcd}
		k[n]\ar[r, "\alpha"]\ar[d]& G\ar[d]\\
		k^c[n]\ar[r]& G[\cancel{\alpha}].
	\end{tikzcd}
\end{center}
\end{defin}

We can prove that every object in the image of $\Sing$ is a dg-Segal space; and we can also construct a model structure for $\Fun^\S(\FreeS\op,\sS)$ where the dg-Segal spaces are its fibrant objects. But as it happened in the case of classical Segal spaces and $\infty$-categories, this object is not enough to completely determine dg-categories. As such, we will define complete dg-Segal spaces. For those, we will use the classical definition of a complete Segal space. 

\begin{prop}[\ref{Prop: delinearisation}] There exists a morphism, called \textbf{the linearisation of $\Delta$}, between the categories $\Delta$ and $\FreeS$, and it defines a Quillen adjunction between the categories $\Fun^\S(\FreeS\op,\sS)$ and $\Fun(\Delta\op, \sS)$ with their respective projective structures. 

  We call the morphism $j^*: \Fun^\S(\FreeS\op,\sS)\to \Fun^\S(\Delta\op, \sS)$ the \textbf{delinearisation morphism}.
\end{prop}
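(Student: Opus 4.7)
The plan is to describe the linearisation functor $j : \Delta \to \FreeS$ explicitly and then to derive the Quillen adjunction from the standard Kan extension / restriction formalism in the enriched setting. On objects, I would send $[n]$ to the free dg-category $L(G_n)$ on the linear graph $G_n$ with vertices $0, 1, \ldots, n$ and one edge $e_i : i \to i+1$ equal to $k$ concentrated in degree $0$ for each $0 \le i < n$. This graph has finitely many edges, each of fixed degree, so $L(G_n)$ is obtained as an iterated pushout of generating cofibrations in $\dg$; in particular it is a cofibrant free dg-category of finite type, hence lies in $c\Free$ and so in $\FreeS$. On morphisms, a monotone map $\sigma : [m] \to [n]$ induces the graph morphism $e_i \mapsto e_{\sigma(i+1)-1} \circ \cdots \circ e_{\sigma(i)}$, with the convention that an empty composition is read as the identity; this gives a dg-functor, i.e.\ a $0$-simplex of $\Map_{\FreeS}(j([m]), j([n]))$. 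Regarding $\Delta$ as an $\S$-enriched category with discrete mapping simplicial sets, this automatically promotes $j$ to a simplicial functor.

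Next, I would set up the adjunction by Kan extension and restriction along $j\op$. The delinearisation $j^*(F) := F \circ j\op$ takes values in simplicial functors $\Delta\op \to \sS$; but since $\Delta$ is discretely enriched, these coincide with ordinary functors, so $j^*$ really lands in $\Fun(\Delta\op, \sS)$ as asserted. Its left adjoint is the enriched left Kan extension along $j\op$, given explicitly by the coend
$$ j_!(G)(K) \;=\; \int^{[n] \in \Delta} \Map_{\FreeS}(K, j([n])) \times G([n]), $$
which exists because $\sS$ is bicomplete and $\FreeS$ is small as an $\S$-enriched category.

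To conclude that $(j_!, j^*)$ is a Quillen pair, I would use that both sides carry their projective model structure, in which fibrations and weak equivalences are detected objectwise. Because $j^*$ is precomposition, it sends an objectwise fibration (resp.\ weak equivalence) of $\Fun^\S(\FreeS\op, \sS)$ to the family of its values at the $j([n])$, which is exactly an objectwise fibration (resp.\ weak equivalence) in $\Fun(\Delta\op, \sS)$. Hence $j^*$ is right Quillen.

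The argument is essentially formal once $j$ has been written down, and I do not expect a real obstacle. The one point that deserves explicit verification is that each $j([n])$ genuinely belongs to $c\Free$; this should follow from the iterated-pushout description above together with the explicit form of Tabuada's generating cofibrations in $\dg$.
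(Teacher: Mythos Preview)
Your proposal is correct and follows essentially the same approach as the paper: define $j([n])$ as the free dg-category on the linear graph with edges $k$ in degree $0$, observe that it lies in $c\Free$, and then invoke the standard $(j_!,j^*)$ Kan extension/restriction adjunction together with the fact that $j^*$ preserves objectwise fibrations and weak equivalences in the projective structures. The paper's own proof is considerably terser---it simply writes down $j([n])$, asserts cofibrancy, and states the Quillen adjunction---so your additional detail on the morphism assignment, the coend formula for $j_!$, and the explicit verification that $j^*$ is right Quillen is welcome elaboration rather than a different strategy.
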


As we now have a direct way of comparing dg-Segal spaces and classical Segal spaces, we can prove that the image by the delinearisation functor of a dg-Segal space is always a Segal space; and using that, we will define the complete dg-Segal spaces as being the dg-Segal spaces such that their delinearised version is a complete Segal space.

\begin{defin}[\ref{Def: complete dg-Segal space}]Let $F$ be an object in $\Fun^\S(\FreeS\op,\sS)$. We say that $F$ is \textbf{a complete dg-Segal space} if it is a dg-Segal space and the morphism
$$F(k)\to F_{hoequiv} $$
is a weak equivalence, where $F_{hoequiv}$ is the subset of $F(\Delta_k^1)$ whose $0$-simplexes are homotopy equivalences. 
\end{defin}

As in the case of dg-Segal spaces, we can prove that the image of $\Sing$ is also contained in complete dg-Segal spaces, and also we can define a model structure such that the complete dg-Segal spaces are its fibrant objects.

%\begin{prop}(\ref{complete dg-Segal in Sing}) Let $T\in\dg$ be a dg-category. Then $\Sing(T)$ is a complete dg-Segal space.
%\end{prop}

\begin{teo}[\ref{Th: complee dg-Segal model structure}]There exists a class of morphisms $C$ and a simplicial closed model structure on $\Fun^\S(\FreeS\op,\sS)$ such that
\begin{enumerate}
	\item The cofibrations are the same as in the projective model structure.
	\item The fibrant objects are the complete dg-Segal spaces that are fibrant for the projective model structure.
	\item The weak equivalences are the $C$-local equivalences.
\end{enumerate}
We call such a model structure \textbf{the complete dg-Segal model structure}, and we denote it by $\dgSc$.
\end{teo}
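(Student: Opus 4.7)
The plan is to realise the claimed structure as the left Bousfield localisation of the projective model structure on $\Fun^\S(\FreeS\op,\sS)$ at a well-chosen set $C$ of morphisms. As a preliminary step I would check that this projective structure is simplicial, left proper, and combinatorial, all three inheriting from $\sS$ by standard arguments on simplicial functor categories (after restricting $\FreeS$ to a small skeleton); this places us within the hypotheses of Hirschhorn's existence theorem for left Bousfield localisations.

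The substantive step is the choice of $C$. Using the simplicial co-Yoneda formula $\Map(h_X, F) \simeq F(X)$ for the representables $h_X = \Map_{\FreeS}(-, X)$, each condition in the definitions of dg-Segal space and of completeness translates into the requirement that $\Map(f, F)$ be a weak equivalence for a specific morphism $f$. I would therefore take $C$ to be the union of four families: the coproduct comparison maps $h_L \sqcup h_K \to h_{L \coprod K}$ for $L, K$ in a small skeleton of $c\Free$ (capturing dg-Segal condition (1)); the map $\emptyset \to h_\emptyset$ from the initial functor to the representable on the initial dg-category (capturing dg-Segal condition (2)); the Segal comparison maps
$$h_{\D^c(1,n,1)} \sqcup^{h}_{h_{\D(1,n,1)}} h_{L(G)} \longrightarrow h_{L(G[\cancel{\alpha}])}$$
from the projective homotopy pushout to the representable, indexed over finite graphs $G \in Gr(\Ch)$ with $L(G[\cancel{\alpha}]) \in c\Free$, objects $x,y\in\Ob(G)$, degrees $n\in\Z$, and cocycles $\alpha\in Z^n(G(x,y))$ (capturing dg-Segal condition (3)); and finally a single completeness map $h_k \to h_E$, where $E$ is an appropriate cofibrant free dg-category of finite type modelling a universal dg-homotopy equivalence.

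Once $C$ is fixed, Hirschhorn's theorem produces a simplicial left proper combinatorial model structure on $\Fun^\S(\FreeS\op,\sS)$ whose cofibrations are the projective ones, whose fibrant objects are the projectively fibrant $C$-local objects, and whose weak equivalences are the $C$-local equivalences. By construction of $C$, these fibrant objects coincide with the projectively fibrant complete dg-Segal spaces, so the three statements of the theorem follow.

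The main obstacle is the explicit identification of the universal dg-homotopy equivalence object $E$, and the verification that $C$-locality at $h_k \to h_E$ is exactly equivalent to the completeness condition $F(k) \to F_{hoequiv}$ being a weak equivalence, rather than some neighbouring condition: one must exhibit a finite-type cofibrant free dg-category such that $F(E) \simeq F_{hoequiv}$ naturally in $F$, mirroring Rezk's walking isomorphism in a linear context where one must distinguish $0$-cocycles of the morphism complex from genuine dg-homotopy equivalences. A secondary technical point is to ensure that $C$ is set-sized rather than a proper class; this is handled by restricting all indexing data, in particular graphs and cocycles, to small generating families before invoking the localisation machinery.
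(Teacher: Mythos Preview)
Your overall strategy---left Bousfield localisation of the projective structure at a carefully chosen set of maps---is exactly what the paper does, and your treatment of the three dg-Segal conditions via representables and co-Yoneda matches the paper's earlier construction of the dg-Segal model structure almost verbatim. The divergence, and the real issue, is in the completeness map.

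You propose to localise at a map $h_k \to h_E$ where $E$ is a cofibrant free dg-category of finite type playing the role of a ``walking dg-homotopy equivalence'', and you correctly flag the identification of such an $E$ as the main obstacle. The paper sidesteps this obstacle entirely: it does \emph{not} use a representable for the completeness condition. Instead it defines $E_k = \L j_!(E(1))$, the derived left Kan extension along the linearisation $j:\Delta\to\FreeS$ of Rezk's classical walking-isomorphism object $E(1)$, and localises at the single map $j_!(E(1)) \to j_!(*)$. The paper explicitly warns that $E_k$ is not in the image of $\Sing$ and is not even a dg-Segal space, so in particular it is not of the form $h_E$ for any $E\in c\Free$. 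The payoff of this choice is that, by the adjunction $j_!\dashv j^*$, locality at this map unwinds to $\Map(*,j^*F)\to\Map(E(1),j^*F)$ being an equivalence, which is precisely Rezk's completeness condition for the Segal space $j^*F$; since $j^*F_0=F(k)$ and $(j^*F)_{hoequiv}=F_{hoequiv}$, this is exactly the condition you want, with no need to build a linear walking equivalence by hand.

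So the gap in your proposal is not the localisation machinery but the existence of the object $E$: there is no reason to expect a \emph{free} dg-category of finite type whose representable detects $F_{hoequiv}$ on the nose, and the paper's route shows that you do not need one. Replacing your fourth family by the single non-representable map $j_!(E(1))\to j_!(*)$ closes the argument immediately.
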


Unfortunately for us, this construction gives us the weak equivalences in terms of $C$-local equivalences only: in order to prove the equivalence we will need an explicit definition of such a morphism, which we will call a DK-equivalence. That description turned out to be more complicated than expected, and it is left here as a hypothesis: suffice here to say that in the case where $F$ and $G$ are both in the image of $\Sing$, then a DK-equivalence $F\to G$ is given by an equivalence of dg-categories. The simplicial version of this result is a well-known proof by Rezk in \cite{ComSegalSpacesREZK}, but unfortunately it is not easy to generalize, and we find obstacles that were not there in Rezk's case. We briefly discuss the problems and possible solutions of this in Section \ref{sec: hypothesis-solving}.

\begin{hyp}[\ref{Hyp: Re-zk}]Let $f:F\to G$ be a morphism between two functors satisfying the dg-Segal conditions. Then $f$ is a DK-equivalence if and only if it is a weak equivalence in the complete dg-Segal model structure.  
\end{hyp}

Accepting that hypothesis to be true, though, we can prove that complete dg-Segal spaces are in fact a model of dg-categories. Indeed, we have the following result:

\begin{teo}[\ref{Th: fully faithfulness}, \ref{Th: equivalence}]Assume Hypothesis \ref{Hyp: Re-zk} is true. Then, there exists an equivalence of categories of the form
$$\Ho(\dg)\to \Ho(\dgSc). $$
\end{teo}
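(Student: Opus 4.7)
The plan is to promote the Quillen adjunction $\Re \dashv \Sing$ from the projective model structure on $\Fun^\S(\FreeS\op,\sS)$ to the complete dg-Segal model structure $\dgSc$, and then to show that the resulting adjunction is a Quillen equivalence with $\dg$, so that its derived functors give the claimed equivalence of homotopy categories. Since $\dgSc$ has the same cofibrations as the projective structure, its fibrant objects are precisely the complete dg-Segal spaces, and since the image of $\Sing$ was already shown to land among such fibrant objects, passing through the left Bousfield localization requires only that $\Sing$ send Tabuada weak equivalences between fibrant dg-categories to $C$-local equivalences; this follows from the explicit description of $\Sing$ provided by Theorem \ref{Th: construction Sing_w} together with the fact that its image lies in $\dgSc$-fibrant objects.

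For fully faithfulness (Theorem \ref{Th: fully faithfulness}), the crux is showing that the derived counit $\Re\,\Sing(T) \to T$ is a Tabuada weak equivalence for every dg-category $T$. By construction, $\Sing(T)$ evaluated on a generator $\D(1,n,1)$ computes the simplicial mapping space of degree-$n$ morphisms of $T$, and its values on an arbitrary free dg-category of finite type are recovered from these via the dg-Segal pullback squares of Definition \ref{Def: dg-Segal spaces}. Writing $T$ as a homotopy colimit in $\dg$ of objects of $c\Free$, for instance through one of the two hypercover constructions alluded to in the abstract, and observing that this presentation is preserved by $\Re\,\Sing$, one concludes that the counit is a weak equivalence and hence that $\Ho(\Sing)$ is fully faithful.

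For essential surjectivity (Theorem \ref{Th: equivalence}), one must show the derived unit $\eta_F \colon F \to \Sing\,\Re(F)$ is a weak equivalence in $\dgSc$ for every cofibrant $F$. Both source and target satisfy the complete dg-Segal conditions (the target because it lies in the image of $\Sing$), so Hypothesis \ref{Hyp: Re-zk} reduces the task to checking that $\eta_F$ is a DK-equivalence. Object-wise, this amounts to comparing, for each free dg-category of finite type $L$, the space $F(L)$ with $\Map_{\dg}(L,\Re(F))$; the Segal pullback squares reduce the comparison to its restriction to the generators $\D(1,n,1)$ and $\D^c(1,n,1)$, where it holds essentially by adjointness of $\Re$ and $\Sing$. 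The completeness condition on $F$ then upgrades this fully-faithful-plus-essentially-surjective pointwise comparison to a genuine DK-equivalence.

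The main obstacle is of course concentrated in Hypothesis \ref{Hyp: Re-zk}: without a workable description of the $\dgSc$-weak equivalences in terms of DK-equivalences, verifying directly that $\eta_F$ is a $C$-local equivalence appears intractable, since the generating set $C$ is produced by the small-object argument rather than explicitly. A secondary technical difficulty is ensuring that $\Re$ preserves the homotopy colimit presentations arising from the hypercover constructions sufficiently well that the reconstruction of a generic $T$ from the generators-and-relations data stored in $\Sing(T)$ takes place up to Tabuada weak equivalence; this is where it is essential that $\FreeS$ be built from cofibrant free dg-categories of finite type rather than arbitrary dg-categories, and where the failure of closure of $c\Free$ under tensor products (mentioned in the motivation) could still cause bookkeeping trouble that one must absorb into the choice of cofibrant replacement.
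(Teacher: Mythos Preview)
Your overall architecture is right, and for fully faithfulness you are essentially following the paper: present $T$ as a homotopy colimit of a $\Free$-hypercover $T_*\to T$, use that $\Re$ commutes with homotopy colimits and that $\Re\,\Sing$ is the identity on $c\Free$ (Proposition~\ref{Prop: fully faithful for free}), and conclude. One point you gloss over: the step ``this presentation is preserved by $\Re\,\Sing$'' hides the fact that one must know $\hocolim\Sing(T_i)\to\Sing(T)$ is a $\dgSc$-weak equivalence. The paper only proves it is a \emph{DK-equivalence} (Lemma~\ref{Lem: hocolim DK}), and then invokes Hypothesis~\ref{Hyp: Re-zk} to upgrade this to a weak equivalence in $\dgSc$ before applying $\Re$. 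So the hypothesis is used in \emph{both} halves of the argument, not only in essential surjectivity.

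For essential surjectivity your approach diverges from the paper and contains a genuine gap. The paper does \emph{not} compute $\Re(F)$ or analyse the unit $F\to\Sing\,\Re(F)$ directly. Instead, given a dg-Segal space $F$, it builds a \emph{dg-Segal hypercover} $\Sing(T_*)\to F$ by free dg-categories with fixed object set $\O=\pi_0(F(k))$ (Theorem~\ref{Th: dg-Segal hypercover}), sets $T=\hocolim T_i$, and shows both $\hocolim\Sing(T_i)\to F$ and $\hocolim\Sing(T_i)\to\Sing(T)$ are DK-equivalences; two-out-of-three then gives $\Sing(T)\to F$ a DK-equivalence, and the hypothesis finishes. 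Your proposed reduction ``to the generators $\D(1,n,1)$ and $\D^c(1,n,1)$, where it holds essentially by adjointness'' does not go through: adjointness gives $\Map(\Re(F),-)\simeq\Map(F,\Sing(-))$, but this does not compute $\Sing(\Re(F))(L)=\Map(L,\Re(F))$ in terms of $F(L)$ without already knowing the unit is an equivalence on those $L$, which is what you are trying to prove. The paper's hypercover construction is precisely the device that circumvents the need to identify $\Re(F)$ explicitly.
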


Even assuming the hypothesis, the proof of the theorem is already an elaborate one. Indeed, it involves a new notion of hypercovers on model categories and another in the context of dg-Segal spaces that we think could be of independent interest.

\subsection{Plan}

This paper is divided into five sections.

In section $2$, we remind the reader of basic concepts around dg-categories and simplicial localizations, and we construct Dugger's Universal Model Category. Basic knowledge of model categories and enriched categories is required to follow it.

Section $3$ contains all the most important definitions. It starts with the construction of the adjunction between dg-categories and simplicial functors between free dg-categories of finite type and simplicial sets. A proof of the adjunction will be given. It then goes on to give the definition of dg-Segal spaces, complete dg-Segal spaces and DK-equivalences, as well as the proof of the model structure of both the dg-Segal spaces and the complete dg-Segal spaces.

Section $4$ is the most technical one, developing a theory of hypercovers in a model category, and in particular hypercovers of dg-categories by free dg-categories. These are the tools that will be needed in the next section to prove the equivalence of the homotopy categories. It can be safely skipped by a first-time reader who is interested in the result but not its proofs.

Section $5$ contains the proofs of the fully faithfullness and essential surjectivity of the functor Sing from dg-categories to dg-Segal spaces constructed in section $3$. 

Lastly, in section $6$ we will briefly discuss  possible avenues for solving the hypothesis, offer a possible improvement on the definition of complete dg-Segal spaces, and say a few words about the automorphisms of dg-categories.

\subsection{Acknowledgments}

I would like to warmly thank Bertrand Toën for supervising the thesis work that made this article possible, and helping me along the way, always pointing in the right direction and providing useful ideas and commentary. I also wanted to extend my thanks to Niels Feld, Wendy Lowen and Arne Mertens for reading through different versions of this text and making comments that helped make it significantly better written and structured. Lastly, I would want to emphatically thank Violeta Borges Marques and Arne Mertens for getting interested in my results and pushing our reflections far enough to make us realize an error in an early version of this paper.

\subsection{Notation}

\begin{itemize}
	\item We denote an adjunction between two categories by $F:M\rightleftharpoons N:G$, with the left adjoint always being the arrow on top.
	\item We denote the category of simplicial sets by $\sS$, the category of cochain complexes over $k$ by $\Ch$ and the category of dg-categories as $\dg$. 
	\item We fix $k$ to be a commutative ring. We denote the tensor product on $k$ in $\Ch$ as $-\otimes -$, and the shift on a cochain complex $A$ as $A[-]$.
	\item We denote by $k[s]$ the cochain complex concentrated in degree $s$, where it is $k$, and $k^c[s]$ to be the complex concentrated in degrees $s$ and $s-1$, where it is $k$.
	\item We denote by $\Fun(A,B)$ the category of functors between categories $A$ and $B$, and $\Fun^\S(A,B)$ the category of simplicial functors between simplicial categories $A$ and $B$.
	\item We denote by $Gr(A)$ the category of graphs enriched over a category $A$, and $Gr(\Ch)^{tf}$ the full subcategory of graphs enriched over complexes of finite type. 
	\item We denote by $\mathcal{L}$ the category of free dg-categories (see Definition \ref{Def: def free}), and $c\Free$ the full subcategory of cofibrant free dg-categories of finite type (see Definition \ref{Def: finite type}).
\end{itemize} 

\section{Background notions}

Let us start with some background we will be using in the next sections. We fix $k$ a commutative ring.

\subsection{Differential graded categories}

In this section, we will state some well-known results on dg-categories. In particular, we will define what a dg-category and a dg-functor are, thus forming the category of dg-categories, $\dg$. Then, we will define Tabuada's model structure on $\dg$ and its moniodal structure, and say a word about how these two structures relate to each other. 

For the purposes of dg-Segal spaces, though, dg-categories are still too big; we will need a smaller subcategory of generators, that we will get with free dg-categories of finite type. As such, we will define a graph on $\Ch$, we see how to construct a free dg-category from a graph, and add a condition on graphs to make them of finite type. 

Unless stated otherwise, the results from this section are taken from \cite{Toen-dg}, but \cite{Keller} is also  a very good reference for them.

\begin{defin}We define $T$ a \textbf{dg-category} (differential graded category) to be a category enriched over $\Ch$ the category of cochain complexes. Equivalently, a dg-category consists of the following data: 
\begin{itemize}
	\item A set of objects $\Ob(T)$.
	\item For every pair of objects in $T$, $(x,y)\in\Ob(T)^2$, a cochain complex $\Hom(x,y)\in \Ch$. 
	\item For every triple of objects in $T$, $(x,y,z)\in\Ob(T)^3$ a composition morphism in $\Ch$
	$$\mu: \Hom(x,y)\otimes\Hom(y,z)\to \Hom(x,z) $$
	with the usual associativity condition.
	\item For every object in $T$, $x\in T$, a morphism $e_x: k\to \Hom(x,x)$ that satisfies the usual unit condition with respect to the composition stated above, where $k$ is the dg-category with a single object and $k$ as its complex of morphisms.
\end{itemize}
\end{defin} 

\begin{defin} Let $T$ and $T'$ be two dg-categories. A \textbf{dg-functor} (also called a\textbf{ morphism of dg-categories}) is a functor $f: T\to T'$ enriched over the category of cochain complexes. Equivalently, it consists of the following data: 
\begin{itemize}
	\item A map of sets $\Ob(T)\to \Ob(T')$.
	\item For every pair of objects in $T$, $(x,y)\in \Ob(T)^2$, a morphism of cochain complexes
	$$\Hom(x,y)\to \Hom(f(x), f(y)). $$
	satisfying the usual associativity and unit conditions.
\end{itemize}  
\end{defin}

\begin{nota} We denote $\dg$ the category of dg-categories and dg-functors.
\end{nota}

For any dg-category $T$, we can define an associated category: the homotopy category of $T$. 

\begin{defin}Let $T$ be a dg-category. We call the \textbf{homotopy category of $T$}, and we denote it by $\l T\r$, a category which has the same objects as $T$ and whose morphisms are given by 
$$\l T \r(x,y)=H^0(\Hom_T(x,y))\ \ \forall (x,y)\in \Ob(T)^2, $$
i.e. the cohomology group of degree 0 of the complex of morphisms.

The composition in this category is given for all $(x,y,z)\in \Ob(T)^3$ by the composition of morphisms
$$H^0(\Hom_T(x,y))\otimes H^0(\Hom_T(y,z))\to H^0(\Hom_T(x,y)\otimes\Hom_T(y,z))\to H^0(\Hom_T(x,z)). $$
\end{defin}

It has been proven that $\dg$ has a model structure, and even a cofibrantly generated model structure. It is defined as follows: 

\begin{defin}Let $f:T\to T'$ be a morphism of dg-categories. 
\begin{itemize}
	\item We say that $f$ is \textbf{quasi-essentially surjective} if the induced morphism of homotopy categories, $\l f\r: \l T\r\to \l T'\r$ is essentially surjective.
	\item We say that $f$ is \textbf{quasi-fully faithful} if for any two objects $(x,y)\in \Ob(T)^2$ the corresponding morphism of complexes $T(x,y)\to T(f(x),f(y))$ is a weak equivalence of complexes.
	\item We say that $f$ is a \textbf{quasi-equivalence} if it is quasi-essentially surjective and quasi-fully faithful. 
\end{itemize}
\end{defin}

\begin{defin}\cite[Not. 2.5]{TAB} Let $T$ be a dg-category. We say that a morphism in $T$, $f\in Z^0(\Hom_T(x,y))$, is a \textbf{homotopy equivalence} if it becomes an isomorphism $H^0(f)$ in $\l T\r$. 
\end{defin}

\begin{defin}\cite[Def. 2.12]{TAB} Let $f:T\to T'$ be a morphism of dg-categories. We say that $f$ is a \textbf{fibration} if 
\begin{itemize}
	\item For every two objects $(x,y)\in \Ob(T)^2$, the corresponding morphism of complexes $T(x,y)\to T'(f(x),f(y))$ is a fibration of complexes, i.e. is surjective.
	\item For all $x\in \Ob(T)$,  $y'\in \Ob(T')$, and all $h:f(x)\to y'$ homotopy equivalences in $T'$, there exists an object $y\in \Ob(T)$ and a homotopy equivalence $g:x\to y$ in $T$ such that $f(y)=y'$ and $f(g)=h$. 
\end{itemize}
\end{defin}

\begin{nota}We denote by $k^c[s]$ the cochain complex concentrated in degrees $s$ and $s-1$, where it is $k$.
\end{nota}

\begin{teo}\label{ch. 1:Tab-model} (\cite[Def. 2.14]{TAB}, see \cite[Th. 2.1]{TAB_Fr} for a proof, in French) The category $\dg$ admits a model structure with the quasi-equivalences as weak equivalences and the fibrations as defined above. It is a cofibrantly generated model category, and the generating cofibrations $\{I, P(s)/\ s\in\Z\}$ are the following:
\begin{itemize}
	\item The functor $I$ is the unique dg-functor $\0\to k$, where $\0$ is the initial object in $\dg$. 
	\item For all $s\in\Z$, let $\D(1,s,1)$ be the dg-category with two objects, $0$ and $1$, where $\Hom(0,0)=\Hom(1,1)=k$, $\Hom(1,0)=0$ and $\Hom(0,1)=k[s]$; and let $\D^c(1,s,1)$ be the dg-category with two objects, $0$ and $1$, where $\Hom(0,0)=\Hom(1,1)=k$, $\Hom(1,0)=0$ and $\Hom(0,1)=k^c\l s\r$. The $P(s):\D(1,s,1)\to\D^c(1,s,1)$ are, for all $s\in\Z$, the dg-functors that send $0$ to $0$, $1$ to $1$, and $\Hom_{\D(1,s,1)}(0,1)$ to $\Hom_{\D^c(1,s,1)}(0,1)$ by the following morphism:
	\begin{center}
		\begin{tikzcd}
			\cdots\ar[r]&0\ar[r]\ar[d]&0\ar[r]\ar[d]& k\ar[r]\ar[d, "id"]& 0\ar[r]\ar[d]&\cdots\\
			\cdots\ar[r]&0\ar[r]      &k\ar[r]      &k\ar[r]             &0\ar[r]       &\cdots
		\end{tikzcd}
	\end{center}
\end{itemize}
\end{teo}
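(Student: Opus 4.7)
The plan is to apply Quillen's small object argument via Kan's recognition theorem for cofibrantly generated model categories (see, e.g., Hovey's monograph). Since $\dg$ is the category of algebras for a colored operad in $\Ch$, it is bicomplete, and the class of quasi-equivalences manifestly satisfies the two-out-of-three property and is closed under retracts (as cohomology and equivalence of categories both are). It therefore suffices to (i) produce a set $J$ of generating trivial cofibrations, (ii) verify that the domains of both $I=\{I\}\cup\{P(s)\}_{s\in\Z}$ and $J$ are small relative to the respective cell complexes, (iii) identify $I$-inj with the ``trivial fibrations'' (quasi-equivalences that are also fibrations in the sense above) and $J$-inj with the fibrations, and (iv) check $J$-cell $\subseteq$ weak equivalences.

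For the generating trivial cofibrations, the first defining condition of a fibration (surjectivity on Hom complexes) is straightforward: I would include the maps $0\to\mathcal{C}(s)$ for each $s\in\Z$, where $\mathcal{C}(s)$ is a dg-category with one object and a contractible endomorphism complex (for instance, $k^c[s]$ with the obvious algebra structure), coming from the standard generating trivial cofibrations of $\Ch$. The second condition (lifting of homotopy equivalences through $f$) is the novel and more delicate part: for this I would introduce an interval-type dg-category $\mathcal{K}$ with two objects $0,1$ such that $\Hom_\mathcal{K}(0,1)$ contains a universal homotopy equivalence, together with the necessary homotopies and higher data to trivialise it; concretely, $\mathcal{K}$ is cofibrant, freely generated modulo relations, and the inclusion $k\to\mathcal{K}$ picking either object is a quasi-equivalence. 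Then $J=\{0\to\mathcal{C}(s)\}_{s\in\Z}\cup\{k\to\mathcal{K}\}$.

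With these generators fixed, the verifications proceed as follows. Smallness of the domains holds because $0$, $k$, and $\D(1,s,1)$ are finitely presentable in $\dg$. Elementary adjunction arguments identify $I$-inj with the class of dg-functors that are surjective on Hom complexes \emph{and} surjective on objects up to strict isomorphism with acyclic mapping complexes of the chosen trivialisations, which, combined with Theorem 2.1 of the French exposition, unfolds into the trivial fibrations. Similarly, the lifting property against $0\to\mathcal{C}(s)$ gives surjectivity of $f$ on Hom complexes, and the lifting against $k\to\mathcal{K}$ gives precisely the second clause defining a fibration. Finally, since every trivial fibration is a fibration and a quasi-equivalence, one must show that every element of $J$-cell is a quasi-equivalence and that $I$-cell maps whose underlying map is a weak equivalence are also in $J$-cell (automatic in the presence of the factorisation).

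The main obstacle is the last verification: closure of weak equivalences under $J$-cell. Unlike in $\sS$ or $\Ch$, pushouts in $\dg$ do not commute with taking $H^0$, because freely adjoining a homotopy equivalence changes the underlying graph nontrivially. The argument will therefore require an explicit understanding of the pushout $T\coprod_k\mathcal{K}$, showing that adjoining the universal homotopy equivalence along an object only enlarges the Hom complexes by acyclic summands and adds one new object quasi-isomorphic to an existing one, hence preserves both quasi-essential surjectivity and quasi-full-faithfulness. This step, together with an induction on transfinite compositions using the fact that filtered colimits in $\dg$ compute $H^0$ correctly on morphism complexes, is the technical heart of the argument.
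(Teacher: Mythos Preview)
The paper gives no proof of this theorem: it is recorded as background, with the argument deferred entirely to Tabuada's original articles. Your outline is, in fact, exactly Tabuada's strategy there---apply the recognition theorem for cofibrantly generated model categories with an explicit set $J$ of generating trivial cofibrations, and isolate the real work in the verification that $J$-cell consists of quasi-equivalences, specifically the analysis of pushouts along the interval inclusion $k\to\mathcal{K}$. Your identification of this last step as the technical heart is on the mark.

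One point does need repair. Your proposed maps $0\to\mathcal{C}(s)$ with $\mathcal{C}(s)$ a dg-category on a \emph{single} object and contractible endomorphism complex do not detect surjectivity on Hom-complexes: the right lifting property of $f:T\to T'$ against $\emptyset\to\mathcal{C}(s)$ only asks that $T$ contain an object with the prescribed endomorphism structure, and says nothing about morphisms between two given objects of $T$. (Separately, $k^c[s]$ is not a unital dg-algebra for $s\neq 0$, so the parenthetical example is ill-posed.) What one actually needs are dg-categories on \emph{two} objects: the generating trivial cofibrations $0\to k^c[s]$ of $\Ch$ transport along the free/forgetful adjunction on a fixed pair of objects to maps of the form $k\coprod k\to\D^c(1,s,1)$, and it is the right lifting property against these that is equivalent to surjectivity of $T(x,y)\to T'(f(x),f(y))$ in every degree. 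With that correction your description of $J$ agrees with Tabuada's, and the rest of the outline goes through as you indicate.
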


\begin{coro}\label{ch. 1:hom-cofib}With the model structure described above, all dg-categories are fibrant. Also, if a dg-category $T$ is cofibrant, for all $(x,y)\in\Ob(T)^2$ the complex $\Hom_T(x,y)$ is cofibrant for the model structure on $\Ch$.
\end{coro}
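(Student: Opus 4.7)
The plan is to address the two claims separately. For the first, I would identify the terminal object of $\dg$ as the one-object dg-category $*$ whose endomorphism complex is the zero complex. The unique morphism $p:T\to *$ from any dg-category $T$ then satisfies both conditions of a fibration automatically: each $\Hom_T(x,y)\to 0$ is surjective, and since the only morphism in $*$ is zero (which coincides with the identity, as $0=1$ in the zero ring), lifting a homotopy equivalence in $*$ reduces to lifting the identity, which is achieved by taking $y=x$ and $g=\mathrm{id}_x$.

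For the second claim, the strategy is to exploit the cellular description of cofibrant dg-categories provided by Quillen's small object argument applied to the generating cofibrations of Theorem \ref{ch. 1:Tab-model}. Every cofibrant $T$ is a retract of a cell complex, namely the colimit of a transfinite sequence $\emptyset=T_0\to T_1\to\cdots$ in which each $T_\alpha\to T_{\alpha+1}$ is a pushout of a coproduct of maps in $\{I\}\cup\{P(s)\}_{s\in\Z}$. Since cofibrant objects in $\Ch$ are closed under retracts and cofibrations in $\Ch$ are closed under transfinite compositions, it suffices to prove by transfinite induction that each intermediate $T_\alpha$ has cofibrant Hom complexes.

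The inductive step splits into two cases. Attaching a cell along $I$ gives the coproduct $T_\alpha\coprod\underline{k}$, which adjoins a single new object with endomorphism complex $k$ and zero complexes for cross-morphisms with old objects; all of these are cofibrant, and the old Hom complexes are unchanged. Attaching a cell along $P(s)$ is more delicate: it amounts to freely adjoining a morphism $\beta:a\to b$ of degree $s-1$ satisfying $d\beta=\alpha$ for a fixed cocycle $\alpha\in Z^s(\Hom_{T_\alpha}(a,b))$. The Hom complex $\Hom_{T_{\alpha+1}}(x,y)$ then admits a natural filtration $F_0\subset F_1\subset\cdots$ indexed by the number of occurrences of $\beta$ in a zigzag path from $x$ to $y$, with $F_0=\Hom_{T_\alpha}(x,y)$ and successive quotients that are tensor products of copies of $k^c[s]$ with Hom complexes of $T_\alpha$. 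By the inductive hypothesis and the cofibrancy of $k^c[s]$, each quotient is cofibrant, and the filtration inclusions are degree-wise split, hence cofibrations in $\Ch$.

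The main obstacle will be the explicit description of $\Hom_{T_{\alpha+1}}(x,y)$ as a cochain complex, since the relation $d\beta=\alpha$ forces cross-terms between filtration levels in the differential. Once the zigzag decomposition is set up correctly and one verifies that the differential respects the filtration up to absorbed contributions in lower levels, the cofibrancy of each $F_n$ and of the colimit $\Hom_{T_{\alpha+1}}(x,y)$ follows from the standard stability properties of cofibrations in the projective model structure on $\Ch$.
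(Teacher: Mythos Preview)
The paper states this corollary without proof, treating both assertions as standard facts about Tabuada's model structure. Your direct argument is the usual way to establish them and is essentially correct.

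One small inaccuracy in the second claim: in your filtration of $\Hom_{T_{\alpha+1}}(x,y)$ by the number of occurrences of $\beta$, the successive quotients $F_n/F_{n-1}$ are direct sums of tensor products of Hom complexes of $T_\alpha$ with copies of $k[s-1]$, not $k^c[s]$. On the associated graded the relation $d\beta=\alpha$ collapses, since $\alpha$ lies in strictly lower filtration, so each occurrence of $\beta$ contributes a one-term complex concentrated in degree $s-1$. This does not affect your conclusion: $k[s-1]$ is cofibrant in $\Ch$, the inclusions $F_{n-1}\hookrightarrow F_n$ are still degree-wise split with cofibrant cokernel, and hence each $F_n$ and the colimit remain cofibrant.
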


\begin{prop}\cite[Th. 1.10]{general-model-cat} With the model structure described above, the model category $\dg$ is right proper.
\end{prop}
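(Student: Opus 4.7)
The plan is to verify right properness of $\dg$ directly by unpacking the definitions on an explicit pullback. Recall that $\dg$ is right proper if for every pullback square
\begin{center}
\begin{tikzcd}
P \ar[r, "\tilde g"] \ar[d, "\tilde f"'] & B \ar[d, "f"] \\
A \ar[r, "g"'] & C
\end{tikzcd}
\end{center}
in which $f$ is a fibration and $g$ is a quasi-equivalence, the map $\tilde g$ is itself a quasi-equivalence. Pullbacks in $\dg$ are computed objectwise: the objects of $P=A\times_C B$ are pairs $(a,b)$ with $g(a)=f(b)$, and the Hom-complexes are the pullbacks
$$\Hom_P((a,b),(a',b'))=\Hom_A(a,a')\times_{\Hom_C(g(a),g(a'))}\Hom_B(b,b')$$
taken in $\Ch$. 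The work is then to verify that $\tilde g$ is quasi-fully faithful and quasi-essentially surjective.

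For quasi-full faithfulness, I would observe that the projection $\Hom_P((a,b),(a',b'))\to\Hom_B(b,b')$ is the base change, along the surjection $\Hom_B(b,b')\to\Hom_C(f(b),f(b'))$ (a fibration in $\Ch$ since $f$ is a fibration in $\dg$), of the quasi-isomorphism $\Hom_A(a,a')\to\Hom_C(g(a),g(a'))$ (a quasi-iso because $g$ is quasi-fully faithful). Since $\Ch$ with its projective structure is proper—the fibers of the two vertical maps agree, so the five-lemma on the long exact sequences of the fibers does the job—the projection is again a quasi-isomorphism.

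For quasi-essential surjectivity, I would take $b\in\Ob(B)$, put $c=f(b)$, and use quasi-essential surjectivity of $g$ to produce $a\in\Ob(A)$ together with a homotopy equivalence between $g(a)$ and $c$ in $\l C\r$. I would then lift the \emph{inverse} class to a cocycle $h\in Z^0(\Hom_C(c,g(a)))$ (which is automatically a homotopy equivalence of $C$, going from $f(b)=c$ to $g(a)$) and apply the second defining condition of a fibration to $h\colon f(b)\to g(a)$: this yields $b'\in\Ob(B)$ with $f(b')=g(a)$, together with a homotopy equivalence $\tilde h\colon b\to b'$ in $B$ lifting $h$. The pair $(a,b')$ lies in $P$ and $\tilde g(a,b')=b'$ is homotopy equivalent to $b$ in $\l B\r$, as required.

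The only subtlety is bookkeeping about directions: one must first pass to the inverse class before invoking the path-lifting axiom, because that axiom lifts maps whose source is $f(b)$. Aside from that, no substantial obstacle is expected; right properness of $\dg$ here is essentially inherited from properness of $\Ch$ together with the strong form of path lifting hardwired into Tabuada's definition of fibrations.
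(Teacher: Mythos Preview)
Your argument is correct. The paper does not supply its own proof of this proposition; it simply cites an external reference \cite[Th.~1.10]{general-model-cat} and moves on. So there is no in-paper proof to compare against.

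That said, there is a shorter route you may wish to note. The paper records immediately before this proposition (Corollary~\ref{ch. 1:hom-cofib}) that every object of $\dg$ is fibrant. It is a standard general fact that any model category in which all objects are fibrant is automatically right proper: a pullback along a fibration between fibrant objects is already a homotopy pullback, and homotopy pullbacks preserve weak equivalences. This is almost certainly what the cited reference provides, and it bypasses the explicit computation entirely. Your hands-on verification is essentially an unwinding of this general principle in the particular case of $\dg$: the quasi-full-faithfulness step is exactly right properness of $\Ch$ (where again all objects are fibrant), and the quasi-essential-surjectivity step is the isofibration half of Tabuada's fibration axiom. Both approaches are fine; yours is more explicit, the general one more portable.
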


We know that $\Ch$ has a tensor product, that we have denoted $-\otimes -$: using that, we can easily define a tensor product over the whole category $\dg$. 

\begin{defin}Let $T$ and $T'$ be two dg-categories. We define the tensor product of $T$ and $T'$ as a category $T\otimes T'$ such that
\begin{itemize}
	\item The objects in $T\otimes T'$ are the objects in $\Ob(T)\times \Ob(T')$. 
	\item For every pair of objects $(x,y), (x',y')$ in $T\otimes T'$, we have a cochain complex of the form
	$$T'\otimes T'((x,y), (x',y'))=T(x,x')\otimes T'(y,y'). $$
\end{itemize}
\end{defin}

\begin{prop}The tensor product defined above gives $\dg$ a closed symmetric monoidal structure. The unit for the monoidal structure is the dg-category with one object and $k$ as its complex of morphisms, and we will denote it by $\D(0)$ or $k$.
\end{prop}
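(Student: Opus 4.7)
The plan is to lift the closed symmetric monoidal structure from $\Ch$ to $\dg$ levelwise on hom-complexes. This is the standard strategy used in enriched category theory: whenever $V$ is a complete cocomplete closed symmetric monoidal category, $V$-$\mathbf{Cat}$ inherits such a structure, and here we specialise to $V = \Ch$.

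First, I would verify the symmetric monoidal axioms. On hom-complexes, the formula $(T_1 \otimes T_2)((x_1,x_2),(y_1,y_2)) = T_1(x_1,y_1) \otimes T_2(x_2,y_2)$ makes the associator, unitor and braiding of $\otimes$ in $\dg$ arise directly from the corresponding structural isomorphisms of the tensor product in $\Ch$, extended by the identity on the object sets. The pentagon, triangle and hexagon axioms in $\dg$ then reduce, hom-complex by hom-complex, to the corresponding coherence axioms in $\Ch$, which hold by hypothesis. The unit law $k \otimes T \cong T$ similarly comes from the unit isomorphism $k \otimes C \cong C$ in $\Ch$ applied to each $T(x,y)$.

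For closedness, I would construct the internal Hom dg-category $\underline{\Hom}(T, T')$ explicitly. Its objects are dg-functors $F : T \to T'$. For two such functors $F, G$, an element of degree $n$ in $\underline{\Hom}(T,T')(F,G)$ is a family $\eta = (\eta_x)_{x \in \Ob(T)}$ with $\eta_x \in T'(F(x), G(x))^n$ satisfying the graded naturality condition
$$\mu\bigl(G(f) \otimes \eta_x\bigr) = (-1)^{np}\, \mu\bigl(\eta_y \otimes F(f)\bigr)$$
for every $f \in T(x,y)^p$. The differential $d\eta$ is defined componentwise by $(d\eta)_x = d(\eta_x)$; a short computation using the Leibniz rule shows the naturality condition is preserved by $d$ and by composition of natural transformations, so this really defines a dg-category. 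The adjunction isomorphism
$$\Hom_\dg(T \otimes T', T'') \;\cong\; \Hom_\dg\bigl(T,\, \underline{\Hom}(T', T'')\bigr)$$
is then set up as follows: given $F : T \otimes T' \to T''$, each $x \in T$ yields a dg-functor $F(x,-) : T' \to T''$ by restriction along $\{x\} \otimes T' \hookrightarrow T \otimes T'$, and the assignment $x \mapsto F(x,-)$ assembles into a dg-functor $\tilde F : T \to \underline{\Hom}(T',T'')$ whose action on hom-complexes is obtained from the tensor-hom adjunction in $\Ch$. The inverse is evaluation. Naturality in $T$ and $T''$ is immediate.

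None of the above is a genuine obstacle; the only mildly delicate point is the bookkeeping of Koszul signs when checking that graded naturality is stable under $d$ and under composition, and that the hom-complex-level adjunction in $\Ch$ assembles consistently into a global dg-functor bijection. All such verifications are purely formal and rely ultimately on the closedness of $(\Ch, \otimes, k)$ already established.
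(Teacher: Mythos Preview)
Your proposal is correct and follows the standard route: lift the symmetric monoidal structure from $\Ch$ hom-complex by hom-complex, and exhibit the internal $\underline{\Hom}$ of dg-functors with graded natural transformations to obtain closedness. There is nothing wrong with the argument, and the sign bookkeeping you flag is indeed the only point requiring care.

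However, the paper does not prove this proposition at all. It appears in the background section on dg-categories, where the author states at the outset that ``unless stated otherwise, the results from this section are taken from \cite{Toen-dg}, but \cite{Keller} is also a very good reference for them.'' The proposition is simply recorded as a known fact from the literature, with no proof or sketch. So there is no comparison to make: your write-up supplies a proof where the paper gives only a citation.
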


\begin{rem}This symmetric monoidal structure is unfortunately not compatible with the model structure, which means we don't have a symmetric monoidal model category: indeed, the tensor product of two cofibrant objects isn't necessarily cofibrant itself. The tensor product does have enough good properties, though, so that it can be derived. 
\end{rem}

And now, to finish this section, we will give a couple of definitions for subcategories which will be useful later.

\begin{defin}We define $G$ a \textbf{graph on $\Ch$} to be a graph enriched over $\Ch$ the category of cochain complexes. Equivalently, it consists of the following data:
\begin{itemize}
	\item A set of objects $\Ob(G)$.
	\item For every pair of objects in $G$, $(x,y)\in\Ob(G)^2$, a cochain complex.
\end{itemize}
We denote the category of graphs on $\Ch$ by $Gr(\Ch)$.
\end{defin}

\begin{defin}\label{Def: def free}There exists a Quillen adjunction $L:Gr(\Ch) \rightleftharpoons \dg:U$ where $U$ is the forgetful functor. We call a \textbf{free dg-category} $T$ a dg-category such that there exists a $T'\in Gr(\Ch)$ with $T=L(T')$. We denote the full subcategory of free dg-categories by $\mathcal{L}$.

In particular, if $X$ is a graph on $\Ch$, then we have that $L(X)$ has the same objects as $X$ and that for all objects $x,y\in \Ob(X)=\mathcal{O}$,
$$L(X)(x,y)=\bigoplus_{m\in\N}\bigoplus_{(x_1,\ldots, x_m)\in\mathcal{O}^m}(X(x,x_1)\otimes \cdots \otimes X(x_m,y)). $$
\end{defin}

\begin{defin}\label{Def: finite type}Let $G\in Gr(\Ch)$ be a graph on $\Ch$. We say that $G$ is \textbf{a graph of finite type} if it has a finite number of vertices and the edges between two vertices are always perfect complexes. We denote the full subcategory of graphs of finite type by $Gr(\Ch)^{tf}$.
\end{defin}

\begin{defin}Let $L=L(G)$ be a free dg-category. We say that $L$ is a \textbf{free dg-category of finite type} if the underlying graph $G$ is a graph of finite type.
\end{defin}

\subsection{Simplicial localizations}

In this section, we will define simplicial localizations, i.e. simplicial categories that in some sense localize a category. Of course, it could happen that such a simplicial category was always trivial, in which case we would have gained nothing; but it is luckily not the case. There is a close relationship between them and the Gabriel-Zisman localization, which we state. We will define simplicial localizations for a general model category $M$; in the case where $M$ is the category of simplicial sets $\sS$, there is an equivalent construction that we denote by $\sS^{C,W}$. That construction will allow us to construct and state a fully faithful functor that we will use in later chapters.

All results in this section come from \cite[2.2 and 2.3]{HAG1} unless stated otherwise.

\begin{defin}\label{Ch. 1: def localization} Let $C$ be a category and $W$ a subset of its morphisms. We call a \textbf{simplicial localization of $C$ with respect to $W$} a pair $(L_WC, l)$ where 
\begin{itemize}
	\item $L_WC$ is a simplicial category
	\item $l:C\to L_WC$ is a morphism of simplicial categories (considering $C$ a simplicial category via the natural inclusion $\Set\to \sS$), called \textbf{the localization morphism},
\end{itemize} 
 such that for every simplicial category $T$, $l$ induces a equivalence of simplicial categories
$$l^*:\R\Fun^\S(L_WC,T)\simeq\R\Fun^\S_W(C,T) $$
where $\R \Fun^\S(L_W, T)$ is seen as an object of $\Ho(\sC)$, and $\R\Fun^\S_W(C,T)$ denotes the full subcategory of $\R\Fun^\S(C,T)$ in $\Ho(\sC)$ consisting of all simplicial morphisms that send $W$ to equivalences in $T$. In other words, the localization morphism is such that, for every simplicial category $T$, $l$ induces a morphism of simplicial categories
$$l^*:\R\Fun^\S(L_WC,T)\to\R\Fun^\S(C,T) $$
which is fully faithful and whose essential image consists of the functors sending the morphisms in $W$ to equivalences in $T$.
\end{defin}

\begin{rem}It has been proven that a simplicial localization as defined above always exists, and that it is equivalent to the Dwyer-Kan simplicial localization from \cite{DK-local}. 
\end{rem}

\begin{prop}\label{ch.1: simplicial loc. and pi_0}Let $C$ be a category, $W$ a subset of its morphisms. We take the simplicial localization of $C$ with respect to $W$, $(L_WC,l)$. The localization morphism induces an equivalence between the Gabriel-Zisman localization $C\l  W^{-1}\r$ and the homotopy category of $L_WC$, $\pi_0(L_WC)$.
\end{prop}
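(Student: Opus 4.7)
The plan is to derive the result from the universal property of the simplicial localization (Definition \ref{Ch. 1: def localization}) by testing it against ordinary categories, viewed as discrete simplicial categories, and then to invoke the Yoneda lemma in $\Cat$.

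First I would fix an ordinary category $T\in\Cat$ and view it as a simplicial category whose mapping objects are discrete simplicial sets. The key observation is that for such a discrete $T$, simplicial functors out of any simplicial category $S$ factor uniquely through the component category $\pi_0(S)$: indeed, a simplicial functor $S\to T$ must send each $\Map_S(x,y)$ into the discrete set $\Hom_T(f(x),f(y))$, hence factors through $\pi_0(\Map_S(x,y))$. Moreover, equivalences of simplicial categories between discrete objects reduce to isomorphisms of categories, so that $\R\Fun^\S(S,T)$ (as an object of $\Ho(\sC)$) is represented by the discrete category $\Fun(\pi_0(S),T)$. Applied to $S=L_WC$, this yields a natural equivalence
$$\R\Fun^\S(L_WC,T)\simeq \Fun(\pi_0(L_WC),T).$$

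Next I would analyse the right hand side of the universal property. Since $T$ is discrete, equivalences in $T$ are exactly isomorphisms, and a simplicial functor $C\to T$ from the discrete category $C$ is just an ordinary functor. Therefore
$$\R\Fun^\S_W(C,T)\simeq \Fun_W(C,T),$$
where $\Fun_W(C,T)$ denotes the full subcategory of functors inverting every arrow of $W$. By the universal property of the Gabriel--Zisman localization, this category is in turn naturally equivalent to $\Fun(C[W^{-1}],T)$. Combining with the universal property of $L_WC$ from Definition \ref{Ch. 1: def localization} produces a natural equivalence
$$\Fun(\pi_0(L_WC),T)\simeq \Fun(C[W^{-1}],T),$$
induced by the localization morphism $l$, for every $T\in\Cat$. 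The Yoneda lemma in $\Cat$ (or rather its $2$-categorical refinement, since we only have equivalences and not strict isomorphisms) then gives the desired equivalence of categories $\pi_0(L_WC)\simeq C[W^{-1}]$.

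The only delicate step is the first one: one must verify that mapping spaces in $\Ho(\sC)$ out of $L_WC$ into a discrete target are genuinely computed by the set of functors out of $\pi_0(L_WC)$, rather than by some more refined derived data. This relies on the fact that a discrete simplicial category is its own (fibrant and) cofibrant replacement up to equivalence for the Bergner-style structure underlying $\sC$, so that the left-derived functor $\R\Fun^\S(-,T)$ agrees with the strict one when the target is discrete. Once this point is carefully settled, the remaining steps are formal applications of the two universal properties and Yoneda.
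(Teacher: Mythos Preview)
The paper does not actually prove this proposition: it is stated as background material, with the section header indicating that ``All results in this section come from \cite[2.2 and 2.3]{HAG1} unless stated otherwise.'' There is therefore no proof in the paper to compare against.

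That said, your approach is the standard one and is essentially correct. You compare the universal property of $L_WC$ from Definition~\ref{Ch. 1: def localization} (tested against discrete targets) with the universal property of the Gabriel--Zisman localization, and conclude by Yoneda. The one point you flag as delicate is indeed the crux: that for a discrete simplicial category $T$, the derived internal hom $\R\Fun^\S(S,T)$ in $\Ho(\sC)$ is equivalent to the ordinary functor category $\Fun(\pi_0(S),T)$. This follows once one observes that the adjunction $\pi_0:\sC\rightleftharpoons\Cat:\text{disc}$ is a Quillen adjunction for the Bergner model structure (where $\pi_0$ preserves all weak equivalences, so $\L\pi_0=\pi_0$), and that discrete simplicial categories are fibrant. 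Your parenthetical remark about discrete categories being ``cofibrant'' is not what is needed here and is not in general true; fibrancy of the target is the relevant condition, and that holds since every object in the Bergner model structure is fibrant. With that correction, the argument goes through.
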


In the case where the category $C$ is a model category, we have a useful result on top of this. We will take the result not from \cite{HAG1}, but from \cite[Localization and model categories]{DAG}.

\begin{prop} Let $M$ be a model category, and $C$ a small category. Then there exists a natural equivalence of simplicial categories 
$$L_{W_C}(\Fun(C,M))\simeq \R\Fun^\S(C,L_WM), $$
where $W$ are the weak equivalences in $M$ and $W_C$ are the weak equivalences on $\Fun(C,M)$ using the projective model structure.
\end{prop}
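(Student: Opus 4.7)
My plan is to construct an explicit comparison map $L_{W_C}(\Fun(C,M)) \to \R\Fun^\S(C, L_WM)$ in $\Ho(\sC)$ and then to prove it is an equivalence by appealing to rectification of homotopy coherent diagrams. The comparison itself is straightforward: post-composing any strict diagram $F:C \to M$ with the localization morphism $M \to L_WM$ produces a simplicial functor $C \to L_WM$, and this assignment sends pointwise weak equivalences in $\Fun(C,M)$ to simplicial equivalences in $L_WM$. By the universal property of Definition \ref{Ch. 1: def localization} applied to the projective structure, the induced map $\Fun(C,M) \to \R\Fun^\S(C, L_WM)$ therefore factors through a canonical comparison
$$\Phi: L_{W_C}(\Fun(C,M)) \longrightarrow \R\Fun^\S(C, L_WM).$$

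To prove that $\Phi$ is an equivalence in $\Ho(\sC)$, I would verify essential surjectivity and fully faithfulness separately. For essential surjectivity, the central input is a rectification theorem for homotopy coherent diagrams: every coherent $C$-indexed diagram in $L_WM$ is equivalent to one arising from a strict $F \in \Fun(C,M)$. I would prove this by Kan extension along the Dwyer-Kan standard simplicial resolution $F_\bullet C \to C$ by free categories; on each $F_n C$ the strictification is immediate because a functor out of a free category is determined by its values on vertices and edges, and the coherence data can then be assembled level-by-level by the lifting properties of the projective model structure on $\Fun(F_n C, M)$. For fully faithfulness, taking $F, G$ projectively cofibrant-fibrant, the mapping space $\Map_{L_{W_C}(\Fun(C,M))}(F, G)$ is represented by the Dwyer-Kan hammock localization of $\Fun(C,M)$, while $\Map_{\R\Fun^\S(C, L_WM)}(F, G)$ is computed as a derived end indexed by a cofibrant simplicial resolution of $C$; using the same simplicial resolution $F_\bullet C$ on both sides, together with the fact that projective weak equivalences are detected pointwise, the two expressions can be identified.

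The main obstacle I expect is the rectification step. Although well-known, it is technically delicate: one must show that the natural comparison from strict to coherent $C$-diagrams becomes an equivalence after localization, which requires manufacturing explicit strict lifts of coherent data. Under mild hypotheses (for instance $M$ combinatorial, or simplicial and cofibrantly generated with enough lifting data) this is a theorem of Dwyer-Kan, later streamlined by Lurie in the reference \cite{DAG} cited here. In full generality one bootstraps by presenting $M$ as a left Bousfield localization of a Dugger universal model category, which is the construction reviewed in Section 2 of this paper; naturality in $C$ and $M$ is then inherited from the construction.
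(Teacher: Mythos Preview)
The paper does not actually prove this proposition: it is stated as a background result and attributed to \cite[Localization and model categories]{DAG}, with no argument given beyond the citation. So there is no ``paper's own proof'' to compare against.

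That said, your outline is essentially the standard route by which this result is established in the cited literature: build the comparison map from the universal property of the simplicial localization, then invoke rectification of homotopy-coherent diagrams (Dwyer--Kan / Lurie) for essential surjectivity, and compare mapping spaces via an end computation for fully faithfulness. Your identification of rectification as the genuine technical core is correct, and your suggestion to reduce to the combinatorial case via Dugger's presentation theorem is exactly how one handles the general model category $M$. The sketch is sound as a plan, though of course the rectification step you flag would require real work to execute in detail; since the paper treats this as a black box from the literature, your level of detail already exceeds what the paper provides.
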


\begin{rem}One consequence of this result is that if $M$ is a model category, $L_WM$ has all limits and colimits, and those limits and colimits can be computed using homotopy limits and colimits.
\end{rem}

We can ask what happens when we take $M=\sS$. In that case, we have a Quillen equivalent construction. We take $\sS$ as a simplicial model category where for all $x,y\in \sS$ we take $\Hom_n(x,y)=\Hom(x\times \Delta_n,y)$.

\begin{defin}Let $C$ be a simplicial category and $W$ a subset of its morphisms. We call the \textbf{model category of restricted diagrams from  $(C,W)$ to $\sS$} the left Bousfield localization of $\Fun^\S(C,\sS)$ along the set of morphisms of the form $\underline{h}_x\to \underline{h}_y$ for all $x\to y\in W$, where $\underline{h}_x:C\op\to \sS$ is given by $\underline{h}_x(z)=\Hom_C(y,z)$ as simplicial sets.
 We denote it by $\sS^{C,W}$.
\end{defin}

\begin{rem}By the general theory of Bousfield localizations, the fibrant objects of $\sS^{C,W}$ are those functors $f:C\to \sS$ that satisfy the following conditions:
\begin{itemize}
	\item The functor $f$ is a fibrant object for the projective model structure on $\Fun^\S(C,\sS)$.
	\item For all $x\to y$ morphism in $W$, the induced morphism $f(x)\to f(y)$ is an equivalence in $\sS$.
\end{itemize}
\end{rem}

\begin{teo}\label{Th: equivalence for functor categories}Let $C$ be a simplicial category, $W$ a subset of its morphisms. Let $(F_*C, F_*W)$ be the canonical free resolution of $(C,W)$ as simplicial categories. There exist two natural functors 
$$(C,W)\xleftarrow{p} (F_*C,F_*W)\xrightarrow{l} L_{F_*W}F_*C=L_WC $$
which induce two right Quillen functors
$$\sS^{C,W}\xrightarrow{p^*}\sS^{F_*C,F_*W}\xleftarrow{l^*}\Fun^\S(L_WC,\sS).$$
Those Quillen functors $p^*, l^*$ are Quillen equivalences. In particular, there exists a chain of Quillen equivalences between $\sS^{C,W}$ and $\Fun^\S(L_WC,\sS)$.
\end{teo}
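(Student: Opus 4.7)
The plan is to analyze each Quillen pair separately. In both cases, the strategy is to first establish the projective version of the Quillen equivalence, and then descend to the Bousfield localizations.

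For $p^*$, I would use that the canonical free resolution $p: F_*C \to C$ is a Dwyer--Kan equivalence of simplicial categories, which implies (by a standard transfer argument for enriched diagram categories) that $(p_!, p^*)$ is a Quillen equivalence between the projective model structures on $\Fun^\S(F_*C, \sS)$ and $\Fun^\S(C, \sS)$. To descend to a Quillen pair between $\sS^{F_*C, F_*W}$ and $\sS^{C,W}$, I would check the behavior of $p_!$ on generating localizing morphisms: since $p_!$ sends the representable $\underline{h}_x$ over $F_*C$ to $\underline{h}_{p(x)}$ over $C$, the localizing morphism associated to $(x \to y) \in F_*W$ is mapped to the one associated to $(p(x) \to p(y)) \in W$, and is therefore inverted in $\sS^{C,W}$ by definition. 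The resulting localized pair remains a Quillen equivalence since, in a left Bousfield localization, the weak equivalences between fibrant objects coincide with the ambient ones and are detected pointwise on simplicial sets.

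For $l^*$, the same initial argument applies: $l: F_*C \to L_W C$ inverts $F_*W$ by construction, so the image under $l_!$ of a localizing morphism becomes a projective weak equivalence in $\Fun^\S(L_W C, \sS)$ (every simplicial functor automatically preserves equivalences of its source), giving a Quillen pair between $\sS^{F_*C, F_*W}$ and $\Fun^\S(L_W C, \sS)$ with its projective structure. The harder step is to show this pair is a Quillen equivalence. For this, I would invoke directly the universal property of the Dwyer--Kan simplicial localization from Definition \ref{Ch. 1: def localization} applied to $T = \sS$: it yields an equivalence between $\R\Fun^\S(L_W C, \sS)$ and $\R\Fun^\S_{F_*W}(F_*C, \sS)$, the latter being precisely the homotopy category of fibrant objects of $\sS^{F_*C, F_*W}$.

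The main obstacle is promoting the abstract equivalence coming from the universal property of the simplicial localization, which a priori lives at the level of homotopy categories of simplicial categories, into an honest Quillen equivalence of model categories. This requires identifying the derived mapping spaces on both sides and checking compatibility with the simplicial enrichments, using the key observation that on fibrant objects the weak equivalences in $\sS^{F_*C, F_*W}$ are detected pointwise, and thus behave well under restriction along $l$. The rest of the verification (cofibrancy, lifting of trivial cofibrations) is routine once this comparison is secured.
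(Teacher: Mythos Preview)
This theorem is not proved in the paper: it is stated as a background result and attributed to \cite[2.2 and 2.3]{HAG1}, with no proof given here. So there is no ``paper's own proof'' to compare your proposal against.

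That said, your outline is essentially the strategy used in the original reference. Both halves proceed by first establishing a projective-level Quillen equivalence and then descending to the Bousfield localizations. For $p^*$, the key input is indeed that the free resolution $p:F_*C\to C$ is a Dwyer--Kan equivalence, and your observation that $p_!$ sends representables to representables (hence localizing morphisms to localizing morphisms) is the right mechanism for the descent. For $l^*$, you have correctly located the substantive step: the universal property of the simplicial localization gives an equivalence at the level of $\Ho(\sC)$, and one must then verify that this equivalence is realized by the specific Quillen pair $(l_!,l^*)$. In \cite{HAG1} this is handled by a direct comparison of derived mapping spaces, together with the fact that fibrant objects in $\sS^{F_*C,F_*W}$ are precisely the projectively fibrant diagrams sending $F_*W$ to equivalences; your sketch gestures at exactly this, though the details (in particular the identification of the derived unit and counit) are where the real work lies and you have left them at the level of a plan rather than an argument.

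One small inaccuracy: you write that the localized pair for $p$ ``remains a Quillen equivalence since, in a left Bousfield localization, the weak equivalences between fibrant objects coincide with the ambient ones.'' That is true but not sufficient on its own; you also need that the derived right adjoint $\R p^*$ identifies local objects on both sides, which follows here because $p$ is a DK-equivalence and hence induces a bijection on homotopy classes of localizing maps. This is implicit in what you wrote but worth making explicit.
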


\begin{rem}We bring to your attention that we have said "there exists a chain of Quillen equivalences" and not "there exists a Quillen equivalence". Indeed, we cannot compose $p_*$ and the left Quillen adjoint of $l_*$ and get a Quillen adjunction, as they aren't both right adjoints.
\end{rem}

\begin{coro}Let $C, D$ be two simplicial categories, $W$ a subset of morphisms of $C$ and $V$ a subset of morphisms of $D$. Let $f:C\to D$ be a morphism of simplicial categories such that $f(W)\subset V$. If the induced functor $Lf:L_WC\to L_VD$ is an equivalence of simplicial categories, then the Quillen adjunction 
$$f_!:\sS^{C,W}\rightleftharpoons \sS^{D,V}:f^*$$
 is a Quillen equivalence.
\end{coro}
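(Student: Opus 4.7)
The plan is to reduce the claim to the analogous statement on the side of simplicial functor categories into $\sS$, where it is a standard general fact, and then transfer the conclusion back through the chain of Quillen equivalences provided by Theorem \ref{Th: equivalence for functor categories}. The key observation is that the canonical free resolution $(C,W)\mapsto(F_*C,F_*W)$ together with the structure morphisms $p_C:F_*C\to C$ and $l_C:F_*C\to L_WC$ are all natural in the pair $(C,W)$, so the hypothesis $f(W)\subset V$ promotes $f$ to morphisms of simplicial categories $F_*f:F_*C\to F_*D$ and $Lf:L_WC\to L_VD$ making the evident squares commute. Applying restriction therefore yields a commutative diagram of right Quillen functors
\begin{center}
\begin{tikzcd}
\sS^{C,W}\arrow[r, "p_C^*"] & \sS^{F_*C, F_*W} & \Fun^\S(L_WC,\sS)\arrow[l, "l_C^*"'] \\
\sS^{D,V}\arrow[u, "f^*"]\arrow[r, "p_D^*"'] & \sS^{F_*D, F_*V}\arrow[u, "(F_*f)^*"] & \Fun^\S(L_VD,\sS)\arrow[u, "(Lf)^*"']\arrow[l, "l_D^*"']
\end{tikzcd}
\end{center}
in which the four horizontal arrows are Quillen equivalences by Theorem \ref{Th: equivalence for functor categories}.

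Next, I would observe that the right-hand vertical adjunction $(Lf)_!\dashv(Lf)^*$ is a Quillen equivalence for the projective model structures: this is the standard fact that restriction along an equivalence of simplicial categories is always a Quillen equivalence, and it is precisely where the hypothesis that $Lf$ is an equivalence is used. With this in hand, I would conclude by applying twice, in succession, the ``two-out-of-three'' property for Quillen equivalences in a commutative square of right Quillen functors: first to the right-hand square, to deduce that $(F_*f)^*$ is a Quillen equivalence; then to the left-hand square, to obtain the same for $f^*$. The ``two-out-of-three'' principle itself follows upon passing to homotopy categories, since a commutative square whose three sides are equivalences of categories has an equivalence for its fourth side, combined with the characterisation of Quillen equivalences as those Quillen adjunctions whose derived adjunction is an equivalence on homotopy categories.

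The only delicate point is the strict naturality of the structure maps $p_C$ and $l_C$ in $(C,W)$, so that the ladder above commutes strictly rather than merely up to coherent natural isomorphism. This naturality is built into the Dwyer--Kan construction of $L_WC$ via the functorial free simplicial resolution $F_*C$; alternatively, one may run the entire argument after passing to homotopy categories from the outset, where commutativity up to isomorphism is all that is needed.
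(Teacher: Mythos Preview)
Your proposal is correct. Note, however, that the paper does not actually supply a proof of this corollary: it is stated as a background result drawn from \cite[2.2 and 2.3]{HAG1}, so there is no ``paper's own proof'' to compare against directly. Your argument is precisely the standard one that underlies the cited reference: use the naturality of the free resolution and of the localization to build the ladder of right Quillen functors, invoke Theorem~\ref{Th: equivalence for functor categories} for the horizontal equivalences, use the hypothesis on $Lf$ for the rightmost vertical, and conclude by two-out-of-three for Quillen equivalences. Your identification of the only subtle point---strict naturality of $p$ and $l$, handled either by the functoriality of the Dwyer--Kan construction or by working at the homotopy-category level---is apt.
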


As usual we can construct a Yoneda lemma for this type of localization too. This result comes from \cite[Theorem 4.2.3]{HAG1}. It is given there for a pseudo-model category.

\begin{teo}\label{Th: fully-faith}Let $C$ a model category that is also a simplicial category and $W$ the set of weak equivalences, then the functor 
$$\R\Sing(-):\Ho(C)\to \Ho(\sS^{C,W}) $$
is fully faithful. 
\end{teo}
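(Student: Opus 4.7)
The plan is to reduce the statement to an enriched Yoneda lemma in the simplicial functor category $\Fun^\S(L_WC,\sS)$, via the Quillen equivalences of Theorem \ref{Th: equivalence for functor categories}. After choosing cofibrant--fibrant replacements of $x,y\in C$, one models $\R\Sing(x)$ by the representable simplicial presheaf $\underline{h}_x(z)=\Map_C(z,x)$. When $x$ is fibrant, Ken Brown's lemma ensures that $\underline{h}_x$ sends weak equivalences of $C$ to equivalences in $\sS$, so $\underline{h}_x$ is both projectively fibrant and local with respect to the morphisms $\underline{h}_a\to\underline{h}_b$ for $a\to b\in W$; hence it is already fibrant in the Bousfield-localized model structure $\sS^{C,W}$, and no further replacement is needed to compute the derived mapping space out of it.

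Next I would transport the computation through the zig-zag
$$\sS^{C,W}\xrightarrow{p^*}\sS^{F_*C,F_*W}\xleftarrow{l^*}\Fun^\S(L_WC,\sS)$$
of Theorem \ref{Th: equivalence for functor categories}. A direct check, using that $F_*C\to C$ is a cofibrant replacement of simplicial categories and that $F_*C\to L_WC$ is the localization morphism, shows that both $p^*$ and the derived inverse of $l^*$ carry the representable $\underline{h}_x$ (up to canonical weak equivalence) to the simplicial representable at the object $x$ in $L_WC$. Once $\R\Sing(x)$ and $\R\Sing(y)$ are both identified with genuine representables in $\Fun^\S(L_WC,\sS)$, the simplicial Yoneda lemma supplies
$$\Map_{\sS^{C,W}}(\R\Sing(x),\R\Sing(y))\simeq \Map_{\Fun^\S(L_WC,\sS)}(\underline{h}_x,\underline{h}_y)\simeq \Map_{L_WC}(x,y).$$
Taking $\pi_0$ and invoking Proposition \ref{ch.1: simplicial loc. and pi_0}, which identifies $\pi_0(\Map_{L_WC}(x,y))$ with $\Hom_{\Ho(C)}(x,y)$, yields the fully faithfulness of $\R\Sing:\Ho(C)\to \Ho(\sS^{C,W})$.

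The main obstacle will be the bookkeeping across the zig-zag of Quillen equivalences: one must verify that $p^*$ and the derived $(l^*)^{-1}$ really do send representables to representables, and that the left Bousfield localization producing $\sS^{C,W}$ does not distort mapping spaces between two fibrant objects of the form $\underline{h}_x$ and $\underline{h}_y$. Both facts are essentially formal consequences of the universal property of simplicial localization (Definition \ref{Ch. 1: def localization}) together with standard properties of left Bousfield localizations (namely that mapping spaces out of local objects into fibrant objects are unchanged), but they must be tracked carefully since $\R\Sing$ is a right derived functor and one is tempted to freely conflate $\Map_C(x,y)$, $\Map_{L_WC}(x,y)$ and $\Map_{\sS^{C,W}}(\underline{h}_x,\underline{h}_y)$ without explicit justification.
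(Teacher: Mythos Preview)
The paper does not actually supply a proof of this theorem: it is stated in the background section and immediately attributed to \cite[Theorem 4.2.3]{HAG1} (for pseudo-model categories), with no argument given. So there is nothing in the paper to compare your proposal against.

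That said, your outline is the standard route to this result and is essentially how the cited reference proceeds: identify $\R\Sing(x)$ with the (co)representable $\underline{h}_x$, observe that for fibrant $x$ this presheaf is already local and fibrant in $\sS^{C,W}$, and then invoke a Yoneda-type computation after passing through the Quillen equivalences of Theorem~\ref{Th: equivalence for functor categories}. One small caution: the step you flag yourself, namely that $p^*$ and the derived inverse of $l^*$ carry $\underline{h}_x$ to the representable at $x$ in $\Fun^\S(L_WC,\sS)$, is precisely where the content lies. It is not quite a ``direct check'' but rather uses that the localization morphism $l$ induces, for fibrant $x$, an equivalence $\Map_C(z,x)\simeq \Map_{L_WC}(l(z),l(x))$; this is a nontrivial property of the Dwyer--Kan simplicial localization (essentially that mapping spaces in $L_WC$ compute derived mapping spaces in $C$). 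Once that is in hand, your argument goes through. Also note that you only need $\pi_0$ of the mapping-space equivalence for the statement as written, so invoking Proposition~\ref{ch.1: simplicial loc. and pi_0} at the end is the right move.
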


\subsection{The universal model category}

Lastly in this section, we will construct a "universal model category" $UC$ for every category $C$, in the sense that for all model category $M$ and all functor $\gamma:C\to M$ there exists a factorization of $\gamma$ by $UC$ which is, in some sense, unique. From here on all results come from \cite{Dugger}.

\begin{defin}Let $C$ be a category, let $M$ and $N$ be two model categories. We fix a functor $f:C\to M$. For all $g:C\to N$, we define a \textbf{factorization of $g$ through $M$} to be a triple $(L,R,\eta)$ such that 
\begin{itemize}
	\item The functors $L:M\rightleftharpoons N:R$ form a Quillen adjunction.
	\item We have a weak equivalence $\eta:L\circ f\simeq g$ in the projective model structure on $\Fun(C,N)$.
\end{itemize}
\end{defin}

\begin{defin}Let $C$ be a category, let $M$ and $N$ be two model categories. We fix a functor $f:C\to M$, and take $g:C\to N$. If we have $(L,R,\eta)$ and $(L',R',\eta')$ two factorizations of $g$ through $M$, we define a \textbf{morphism of factorizations} to be a natural transformation $F:L\to L'$ such that for all $x\in C$ the diagram
\begin{center}
	\begin{tikzcd}
		L\circ f(x)\ar[rr,"F\circ \Id_f"]\ar[rd,"\eta"]& &L'\circ f(x)\ar[ld,"\eta'"]\\
		 & g(x) &
	\end{tikzcd}
\end{center}
commutes.
\end{defin}

\begin{nota}With the above conditions, we denote $\Fact_f(g)$ the category of factorizations of $g$ through $M$ and morphisms between them.
\end{nota}

\begin{prop}\label{ch. 1: Dugger}\cite[Prop. 2.3]{Dugger} Let $C$ be a category and $M$ be a model category. There exists a closed model category $UC$ and a functor $r:C\to UC$ such that the following is true: for every functor $\gamma:C\to M$ there exists a factorization of $\gamma$ through $UC$, $(\Re,\Sing,\eta)$, 
\begin{center}
	\begin{tikzcd}
		C\ar[rr,"r"]\ar[rrdd,"\gamma" {name=A}]&&UC\ar[dd,"\Re"', shift right]\ar[Rightarrow, to=A, "\eta" above]\\
		&  &\\
		 &&M\ar[uu, "\Sing"', shift right]
	\end{tikzcd}
\end{center}
and the category of factorizations $\Fact_r(\gamma)$ is contractible.
\end{prop}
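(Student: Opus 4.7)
The plan is to set $UC := \Fun(C\op, \sS)$ with its projective model structure, and to take $r: C \to UC$ to be the Yoneda embedding (with $C$ viewed as discrete simplicial). The projective structure is cofibrantly generated, with generating (trivial) cofibrations of the form $r(c) \otimes (\partial\Delta^n \hookrightarrow \Delta^n)$ and their horn analogues; in particular every simplicial presheaf is canonically a homotopy colimit of representables.

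Given $\gamma: C \to M$, I would first replace $\gamma$ by an objectwise equivalent functor $\tilde\gamma: C \to M$ with cofibrant values, using cofibrant replacement in the projective model structure on $\Fun(C, M)$. I would then define $\Sing: M \to UC$ by $\Sing(m)(c) := \Map_M(\tilde\gamma(c), m)$ via a simplicial framing of $M$, and let $\Re$ be its left adjoint, which is the left Kan extension of $\tilde\gamma$ along $r$; in particular $\Re(r(c)) = \tilde\gamma(c)$ and $\eta: \Re \circ r \simeq \gamma$ is the chosen replacement. Applied to a generating (trivial) cofibration, $\Re$ yields $\tilde\gamma(c) \otimes (\partial\Delta^n \hookrightarrow \Delta^n)$ (or its horn variant), a (trivial) cofibration in $M$ because $\tilde\gamma(c)$ is cofibrant; this shows $(\Re, \Sing)$ is a Quillen pair.

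The main obstacle is the contractibility of $\Fact_r(\gamma)$. My strategy would be to identify this category, up to equivalence of nerves, with the category of projective-cofibrant resolutions of $\gamma$ in $\Fun(C, M)$ together with their augmentations to $\gamma$: a factorization $(L, R, \eta)$ restricts along $r$ to the cofibrant resolution $c \mapsto L(r(c))$, and conversely every such resolution produces a Quillen pair via left Kan extension, with morphisms matching because two cocontinuous functors out of $UC$ agreeing on representables agree up to canonical natural isomorphism. Contractibility of the category of cofibrant resolutions of a fixed diagram is classical: functorial cofibrant replacement provides, for any resolution $Q$, a natural zigzag from $Q$ to $\tilde\gamma$ over $\gamma$, yielding a contraction of the nerve onto the single point $\tilde\gamma$. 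The hard part, and the technical heart of Dugger's argument, is making this contraction coherent in all simplicial dimensions — equivalently, showing that the zigzags assemble into a deformation of the whole classifying space, not merely connect path components; this requires either a Reedy-type induction on the generating (trivial) cofibrations or a direct Quillen-theorem-A style argument applied to the comma category $\Fact_r(\gamma)$.
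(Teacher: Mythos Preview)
Your construction matches the paper's exactly: $UC=\Fun(C\op,\sS)$ with the projective model structure, $r$ the Yoneda embedding into discrete simplicial presheaves, $\Sing(x)=\Map(\gamma(-),x)$, and $\Re$ its left adjoint. In fact the paper gives only a ``Sketch of construction'' and explicitly declines to verify either that $(\Re,\Sing,\eta)$ is a factorization or that $\Fact_r(\gamma)$ is contractible, deferring both to Dugger's original; your outline therefore goes further than the paper does, and your identification of $\Fact_r(\gamma)$ with a category of cofibrant resolutions is precisely Dugger's strategy.
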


\begin{proof}[Sketch of construction]\cite[9.5 Section 3]{Dugger}
The universal model category $UC$ is no other than $UC=\Fun(C\op,\sS)$, the category of simplicial presheaves over $C$. The inclusion $i:\Set\to \sS$ induces an obvious inclusion $j:\Fun(C\op,\Set)\to \Fun(C\op,\sS)$,  which composed with the Yoneda embedding gives us the needed functor from $C$ to $\Fun(C\op,\sS)$, $r=j\circ h:C\to \Fun(C\op,\Set)\to \Fun(C\op, \sS)$. 

Now that we have the closed model category $\Fun(C\op, \sS)$ and the functor $r:C\to \Fun(C\op, \sS)$, we take a functor $\gamma:C\to M$ to another model category. We are going to give the factorization, but we won't prove that it is in fact one. The easier adjoint to define is $\Sing$. We define $\Sing$ as follows:
$$\Sing(x)=\Map( \gamma(-),x)\in \Fun(C\op, \sS). $$
The functor $\Re$ is just the left adjoint of $\Sing$.
\end{proof}

\section{The Sing functor and dg-Segal spaces }\label{sec: section 1}

Now that we have the necessary background notions, we can start on our results. In this chapter, we will first construct the necessary adjunction between dg-categories and our new functor category; in the next section we will go on to define dg-Segal spaces and their model structure, and then give the relationship between dg-Segal spaces and classical Segal spaces. Then in a third section we will use that relationship to define complete dg-Segal spaces and their model structure. Lastly, we will define DK-equivalences and end by stating the hypothesis we will use in the last section.

\subsection{Constructing the adjunction}

Let us work on defining the adjunction between dg-categories and the functor category that will function as our ambient category for our complete dg-Segal spaces. To be precise, we will not get an adjunction between the model categories: instead, we will construct a \textit{chain} of Quillen adjunctions from $\dg$ to $\Fun(\FreeS\op,\sS)$. But we cannot compose right Quillen functors to left Quillen functors and still get a Quillen adjunction. And in this case, one of the terms of the chain goes in the "wrong direction", which means this chain cannot be composed at the level of model categories. There is a way to avoid this issue on the level of homotopy categories, though.

\begin{nota}We denote the full subcategory of cofibrant free dg-categories of finite type by $c\Free\subset\mathcal{L}$. As we won't consider any other type in this text, we will most of the time omit the term "cofibrant" from our explanations.
\end{nota}

\begin{defin}Let $W$ be the set of weak equivalences in Tabuada's model structure on $\dg$ the category of dg-categories. We construct the simplicial localization $L_W\dg$ of dg-categories with respect to $W$. We define the \textbf{simplicial $c\Free$}, and we denote it by $\FreeS$, the full simplicial subcategory of $L_W\dg$ whose objects are the ones in $c\Free$.
\end{defin}

\begin{rem}We must be careful with the definition of $\FreeS$. It is tempting to just define it as $L_Wc\Free$, but the two categories $L_Wc\Free$ and $\FreeS$ do not coincide. 
\end{rem}

\begin{teo}\label{Th: construction Sing_w}There exists a chain of Quillen adjunctions of the form 
$$\Re:\Fun^\mathbb{S} (\FreeS\op, \sS)\rightleftharpoons \cdots\rightleftharpoons\dg:\Sing,$$
and it can be derived into a single adjunction
$$\Ho(\Fun^\mathbb{S} (\FreeS^{op}, \sS))\rightleftharpoons \Ho(\dg). $$
\end{teo}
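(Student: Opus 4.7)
The plan is to assemble the chain by combining Dugger's universal model category construction (Proposition \ref{ch. 1: Dugger}) with a left Bousfield localization and then with the Dwyer--Kan comparison from Theorem \ref{Th: equivalence for functor categories}. First I would apply Dugger's construction to the inclusion $\gamma:c\Free\hookrightarrow\dg$, producing a Quillen adjunction
$$\Re_0:\Fun(c\Free\op,\sS)\rightleftharpoons\dg:\Sing_0, \qquad \Sing_0(T)(L)=\Map(\gamma(L),T).$$
Because every dg-category is fibrant (Corollary \ref{ch. 1:hom-cofib}) and every $L\in c\Free$ is cofibrant, the derived mapping space $\Map(-,T)$ carries quasi-equivalences in $c\Free$ to weak equivalences of simplicial sets; hence each $\Sing_0(T)$ is already fibrant in the left Bousfield localization $\sS^{c\Free\op,W}$ obtained by inverting the morphisms $\underline{h}_x\to\underline{h}_y$ attached to quasi-equivalences. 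A standard Bousfield localization argument then shows that the Dugger adjunction descends to a Quillen adjunction
$$\Re_1:\sS^{c\Free\op,W}\rightleftharpoons\dg:\Sing_1.$$

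Next, I would apply Theorem \ref{Th: equivalence for functor categories} to $(c\Free\op,W)$, which yields a zigzag of right Quillen equivalences
$$\sS^{c\Free\op,W}\xrightarrow{p^*}\sS^{F_*c\Free\op,F_*W}\xleftarrow{l^*}\Fun^\S((L_Wc\Free)\op,\sS).$$
The inclusion $c\Free\hookrightarrow\dg$ induces a canonical simplicial functor $L_Wc\Free\to\FreeS$, and provided that this is a Dwyer--Kan equivalence, the corollary of that theorem produces a further Quillen equivalence $\Fun^\S(\FreeS\op,\sS)\rightleftharpoons\Fun^\S((L_Wc\Free)\op,\sS)$. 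Splicing all of these together yields the claimed chain
$$\Re:\Fun^\S(\FreeS\op,\sS)\rightleftharpoons\cdots\rightleftharpoons\dg:\Sing.$$

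Composing this chain at the model-category level is obstructed by the fact that $p^*$ and $l^*$ both point into $\sS^{F_*c\Free\op,F_*W}$ as right Quillen functors, so they cannot be concatenated into one Quillen adjunction. On homotopy categories, however, both are equivalences and therefore invertible; inverting the "wrong-direction" equivalence and composing along the zigzag produces the single derived adjunction $\Ho(\Fun^\S(\FreeS\op,\sS))\rightleftharpoons\Ho(\dg)$. The main technical obstacle is verifying the Dwyer--Kan equivalence $L_Wc\Free\simeq\FreeS$: as the remark following the definition of $\FreeS$ stresses, these two simplicial categories are genuinely distinct, and the comparison requires showing that mapping spaces in the localization of $c\Free$ agree up to weak equivalence with those computed inside the larger category $L_W\dg$. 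This should reduce to the fact that cofibrant-fibrant replacements for pairs of objects in $c\Free$ can be taken within $c\Free$ itself, so that both mapping space constructions are represented by the same models, after which the remainder of the proof is essentially formal.
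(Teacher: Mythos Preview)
Your approach diverges from the paper's in a way that introduces a real gap. The paper applies Dugger's construction to the identity $\dg\to\dg$, not to $c\Free\hookrightarrow\dg$, obtaining $\Re_W:\Fun(\dg\op,\sS)\rightleftharpoons\dg:\Sing_W$; it then uses the localization morphism $l:\dg\to L_W\dg$ to get $l_!:\Fun(\dg\op,\sS)\rightleftharpoons\Fun^\S(L_W\dg\op,\sS):l^*$, and finally restricts along the full inclusion $j:\FreeS\hookrightarrow L_W\dg$. The wrong-direction problem is handled not by inverting a Quillen equivalence, but by observing that $\Sing_W(X)=\Map(-,X)$ sends all quasi-equivalences to weak equivalences (since every dg-category is fibrant), so on homotopy categories it factors through $l^*$ as $\Sing_W=l^*\circ\Sing'$. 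The derived adjunction is then checked by hand using that $l^*$ is fully faithful on homotopy categories.

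Your route instead hinges on the comparison $L_Wc\Free\simeq\FreeS$, and this is exactly what the paper's remark after the definition of $\FreeS$ is warning against. Your proposed justification---that cofibrant-fibrant replacements for objects of $c\Free$ can be taken within $c\Free$---does not suffice: the mapping spaces in $L_Wc\Free$ are computed via hammocks (zigzags of weak equivalences) entirely inside $c\Free$, and there is no reason a zigzag in $\dg$ between two free dg-categories of finite type can be rectified to one inside $c\Free$. Cylinder or path objects for $L\in c\Free$ are not obviously free of finite type, so the standard model-categorical identifications of mapping spaces are unavailable in the subcategory. The paper's construction sidesteps this completely by never localizing $c\Free$ on its own; it only ever uses $\FreeS$ as a full subcategory of $L_W\dg$, where the mapping spaces are correct by definition. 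Unless you can supply an independent proof that $L_Wc\Free\to\FreeS$ is a Dwyer--Kan equivalence, your chain does not reach $\Fun^\S(\FreeS\op,\sS)$.
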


\begin{proof}
Let us start from the right. As we have defined $\FreeS$ to have the simplicial structure induced by the simplicial structure of $L_W\dg$, there exists a natural simplicial inclusion functor $j:\FreeS\hookrightarrow L_W\dg$. If we take the projective model structure on categories of simplicial functors, we have a Quillen adjunction of the form 
$$j_!:\Fun^\S(\FreeS\op, \sS)\rightleftharpoons \Fun^\S(L_W\dg\op,\sS):j^* $$
Now, by definition of the simplicial localization, there exists a functor $l:\dg\to L_W\dg$ that gives us a Quillen adjunction of the form 
$$l_!:\Fun(\dg\op, \sS)\rightleftharpoons \Fun^\S(L_W\dg\op, \sS):l^* $$
And lastly, using Dugger's Universal Model Category from \cite{Dugger}, if we take both the category $C$ and the model category $M$ to be $\dg$, and the functor $\gamma$ to be the identity, we have a factorization $(\Re_W, \Sing_W,\eta)$,
$$\Re_W:\Fun(\dg\op, \sS)\rightleftharpoons \dg:\Sing_W $$
where we know that the right adjoint is given by $\Sing_W(X)=\Map(-,X)$.

 These Quillen adjunctions cannot be composed. Indeed, if we write the chain of Quillen adjunctions and we write the left Quillen functor always on top, we get the following diagram:
\begin{center}
	\begin{tikzcd}
		\Fun^\S(\FreeS\op,\sS)\ar[r, "j_!", shift left]& \ar[l, "j^*", shift left]\Fun^\S(L_W\dg\op,\sS)\ar[r,  "l^*" below, shift right]& \ar[l, "l_!" above, shift right]\Fun(\dg\op,\sS)						\ar[r,"\Re_W", shift left]& \ar[l,"\Sing_W", shift left] \dg.
	\end{tikzcd}
\end{center}
At this level there is nothing we can do to fix this: but on the homotopy categories there is. Indeed, in the homotopy categories we can construct a factorization of the functor $\Sing_W$, which would bypass the need for the adjoint $l_!$ altogether. In other words, we are going to try and find a functor $\Sing'$ such that the diagram 
\begin{center}
	\begin{tikzcd}
		\Ho(\Fun^\S(\FreeS\op,\sS))\ar[r, "j_!", shift left]& \ar[l, "j^*", shift left]\Ho(\Fun^\S(L_W\dg\op,\sS))\ar[r,  "l^*" below, shift right]& \ar[l, "l_!" above, shift right]\Ho(\Fun(\dg\op,\sS))						\ar[d,"\Re_W", shift left]\\
		&& \ar[u,"\Sing_W", shift left] \Ho(\dg)\ar[ul, "\Sing'", dashed].
	\end{tikzcd}
\end{center}
 commutes. 

By definition of the simplicial localization category, if we have a functor $F:\dg\to \sS$ that sends all morphisms in $W$ to weak equivalences in $\sS$, then it can be factorized through $l^*$ on the homotopy categories. But $\Sing_W(X)=\Map(-,X)$ is a right Quillen adjoint, so it sends weak equivalences between fibrant objects to weak equivalences, and all objects in $\dg$ are fibrant. So for all $X\in\dg$, the image $\Sing_W(X)$ can be factorized through $L_W\dg$. That gives us a functorial factorization of the form
$$\Sing_W(X)=l^*\circ \Sing'(X) $$
in the homotopy categories.

Now we only need to prove that the functors $\Sing=j^*\circ\Sing'$ and $\Re=\Re_W\circ l^*\circ j_!$ are truly adjoints, i.e. that for all $X\in\Fun^\S(\FreeS\op,\sS)$ and for all $Y\in\dg$ there exists an functorial isomorphism of the form
$$\l X,j^*\circ \Sing'(Y) \r_{\Fun^\S(\FreeS\op,\sS)}\simeq \l\Re_W\circ l^*\circ j_!(X),Y\r_{\dg}. $$
Let us start on the left side and work our way through. We start with $\l X,j^*\circ \Sing'(Y) \r_{\Fun^\S(\FreeS\op,\sS)}$. As the functors $j_!$ and $j^*$ are adjoints, we have that 
$$\l X,j^*\circ \Sing'(Y) \r_{\Fun^\S(\FreeS\op,\sS)}\simeq \l j_!(X),\Sing'(Y)\r_{\Fun^\S(L_W\dg\op,\sS)}. $$
Because we are working on the homotopy categories, by the definition of simplicial localizations the functor $l^*$ is fully faithful. That means, in particular, that we get the following isomorphism:
$$\l j_!(X),\Sing'(Y)\r_{\Fun^\S(L_W\dg\op,\sS)}\simeq  \l l^*\circ j_!(X),\Sing_W(Y)\r_{\Fun(\dg\op,\sS)}.$$
And finally, using the definition of an adjunction again on $\Re_W$ and $\Sing_W$, we get that
$$\l l^*\circ j_!(X),\Sing_W(Y)\r_{\Fun(\dg\op,\sS)}\simeq \l\Re_W\circ l^*\circ j_!(X),Y \r_\dg. $$
We have the isomorphism
$$\l X,\Sing(Y) \r_{\Fun^\S(\FreeS\op,\sS)}\simeq \l\Re(X),Y \r_{\dg}$$
and the pair $\Re:\Ho(\Fun^\S(\FreeS\op,\sS))\rightleftharpoons \Ho(\dg):\Sing$ is an adjunction on the homotopy categories. We have finished the proof.
\end{proof}

We have now an adjunction between the categories we wanted it for. The next step is proving that this functor is an equivalence.  We have to define the image of $\Sing$ first, and that will take some work.
%We have then, an adjunction between the categories we wanted. In order to use it to characterize all dg-categories, though, we still need to prove that the functor $\Sing$ is fully faithful, and characterize $\Sing(\dg)$. Let's get to doing that. 

\subsection{dg-Segal spaces and the delinearization functor}

In this section, we will give a definition of dg-Segal spaces, and prove that every element in the image of the $\Sing$ functor is a dg-Segal space. Next, we will define a model structure for them. In \cite{ComSegalSpacesREZK}, Rezk takes his model structure over $\Fun(\Delta\op,\sS)$ and does a Bousfield localization that makes the Segal spaces into its fibrant objects; he calls that \textbf{the Segal space model category structure}. Following his footsteps, we will get a new model structure for $\Fun^\S(\FreeS\op,\sS)$ where the fibrant objects are the dg-Segal spaces that are fibrant for the projective structure. 

Considering how we have followed Rezk's method pretty closely, it won't be surprising to our readers to see that there is a close relationship between our dg-Segal spaces and classic Segal spaces. Indeed, we will define a Quillen adjunction between $\Fun^\S(\FreeS\op,\sS)$ and the model category of simplicial spaces, called the delinearization functor. We will then go on to prove that the image of every dg-Segal space by the delinearization functor is a Segal space in the classical sense.

On top of our own results, we will also recall some results on the classical case to the reader as they are used. Even though the results about Segal and complete Segal spaces in this section are due to Rezk, we will take inspiration in Rasekh's lecture notes in \cite{Nima} for their presentation.

 As we said in the introduction, we take our inspiration for this section from complete Segal spaces. We remind the reader that the definition of said spaces is the following.

\begin{defin}\cite[Def. 4.1]{ComSegalSpacesREZK} Let $W$ be a Reedy fibrant simplicial space. We say that $W$ is a \textbf{Segal space} if the maps 
$$W_k\to \overbrace{W_1\times_{W_0}\cdots\times_{W_0}W_1}^{\text{$k$ times}}$$
are weak equivalences for all $k\geq 2$.
\end{defin}

Here, though, we aren't just working with simplicial sets: we have a linear structure to worry about. Consequently, we are going to define the action of adding a module to a complex of modules.

\begin{defin}\label{Def: G alpha}Let $G\in Gr(\Ch)$ be a graph in the category of complexes, $x,y\in \Ob(G)$ two objects in $G$, and $\alpha\in Z^n(G(x,y))$ a cycle in $G(x,y)$. We define the graph $G[\cancel{\alpha}]$ to be a graph of complexes such that 
\begin{itemize}
	\item The graph $G[\cancel{\alpha}]$ has the same objects as $G$.
	\item The graph $G[\cancel{\alpha}]$ has the same morphisms as $G$ between $x',y'\in\Ob(G)$ if $(x',y')\neq (x,y)$, i.e. $G[\cancel{\alpha}](x',y')=G(x',y')$.
	\item We define $G[\cancel{\alpha}](x,y)$ to be the complex of morphisms $G(x,y)\oplus_k \beta$ where $d\beta=\alpha$.
\end{itemize}
In other words, we have that $G[\cancel{\alpha}]$ is a pushout in the graphs over the morphism $k[n]\to k^c[n]$, where $k[n]$ is the graph with two objects, $0,1$, and $k[n]$ as $\Hom(0,1)$ the morphism between the two, and $k^c[n]$ is the graph with two objects, $0,1$, and the complex $\Hom(0,1)$ which is always zero except for the degrees $n-1$ and $n$, where it is $k$.
\begin{center}
	\begin{tikzcd}
		k[n]\ar[r, "\alpha"]\ar[d]& G\ar[d]\\
		k^c[n]\ar[r]& G[\cancel{\alpha}].
	\end{tikzcd}
\end{center}

\end{defin}

\begin{rem}It isn't hard to see that all we have done here has been adding a term in degree $n-1$ to the complex of modules $G(x,y)$, effectively killing $\alpha$ in the homotopy category.
\end{rem}

Now that we have this definition, we can apply it to finally define the conditions of our image.

\begin{defin}\label{Def: dg-Segal spaces} Let $F\in\Fun^\S(\FreeS\op,\sS)$ be a simplicial functor from cofibrant free dg-categories to simplicial sets. We say that $F$ satisfies the \textbf{dg-Segal conditions} if:
\begin{enumerate}
	\item For all $L, K\in\FreeS$, $F(L\coprod K)\to F(L)\times F(K)$ is a weak equivalence. 
	\item The image of the initial object is a point, i.e. $F(\emptyset)\simeq *$.
	\item Let $G$ be a graph in $Gr(\Ch)$ and $x,y\in\Ob(G)$. For all $\alpha\in Z^n(G(x,y))$, the image of the free dg-category issued from $G[\cancel{\alpha}]$ is a homotopy pullback in $\sS$ of the following form:
	\begin{center}
		\begin{tikzcd}
			F(L(G[\cancel{\alpha}]))\ar[r]\ar[d]\arrow[dr,phantom, "\ulcorner^h", very near start]& F(L(G))\ar[d]\\
			F(\D^c(1,n,1))\ar[r]& F(\D(1,n,1))
		\end{tikzcd}
	\end{center}	
	where $\D^c(1,n,1)=L(k^c[n])$ and $\D(1,n,1)=L(k[n])$; i.e. $F$ sends the homotopy pushouts of the previous definition to homotopy pullbacks.
\end{enumerate}
	We denote the full subcategory of $F\in\Fun^\S(\FreeS\op,\sS)$ that satisfies the dg-Segal conditions by $\mathbf{dg-Segal}$ and call its objects \textbf{dg-Segal spaces}.
\end{defin}

\begin{rem}We will see in Sublemma \ref{SLem: equivalent condition on dg-Segal} that we could probably give an equivalent definition of dg-Segal spaces which is much closer to the classical definition of a Segal space. We have decided not to do so because we think this definition emphasizes better the linear sturcture. In any case, this is not a minimal construction: see Section \ref{sec: Delta_k} for a suggestion of a minimal construction for dg-Segal spaces.
\end{rem}

We have constructed this under the assumption that the image of the functor $\Sing$ we defined in Theorem \ref{Th: construction Sing_w} is formed up to weak equivalence of the functors that satisfy the dg-Segal conditions. In order to prove that, first we need to prove that every object in the image is of this form.

\begin{prop}\label{Prop: Sing is dg-Segal}Let $T\in \dg$ be a dg-category. Then the functor $\Sing(T)$ satisfies the dg-Segal conditions.
\end{prop}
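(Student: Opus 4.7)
The strategy is to unwind the description of $\Sing$ from Theorem \ref{Th: construction Sing_w} so as to reduce each of the three dg-Segal conditions to the single formal fact that a derived mapping space $\Map(-,T)$ converts homotopy colimits in its first argument into homotopy limits in $\sS$. Concretely, for $L\in c\Free$ the chain of Quillen adjunctions built in that theorem produces a natural equivalence $\Sing(T)(L)\simeq\Map(L,T)$, where $\Map$ denotes the mapping space in Tabuada's model structure on $\dg$; this identification is the engine of the whole proof, and once it is in place the three conditions will each reduce to a standard property of derived representables.

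Condition $(2)$ is then immediate, because $\emptyset$ is the initial object of $\dg$ and so $\Map(\emptyset,T)$ is contractible. For Condition $(1)$, the coproduct $L\coprod K$ of two cofibrant free dg-categories of finite type is still of the same kind and coincides with the categorical coproduct in $\dg$; promoting the universal property to mapping spaces by means of a cosimplicial resolution in the first variable yields the required weak equivalence $\Map(L\coprod K,T)\simeq \Map(L,T)\times\Map(K,T)$.

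The bulk of the work is in Condition $(3)$. First I would use that the free functor $L\colon Gr(\Ch)\to\dg$ is a left adjoint, and so preserves pushouts; applied to the defining square of Definition \ref{Def: G alpha} this turns it into a pushout square in $\dg$ whose left vertical map is the generating cofibration $P(n)\colon\D(1,n,1)\to\D^c(1,n,1)$ of Theorem \ref{ch. 1:Tab-model} and whose top-right corner is $L(G)$. Since $P(n)$ is a cofibration and the three corners of the span are all cofibrant (free dg-categories being cofibrant in Tabuada's structure), this pushout is in fact a homotopy pushout in $\dg$; applying the derived functor $\Map(-,T)$ then produces precisely the homotopy pullback square required by Definition \ref{Def: dg-Segal spaces}(3).

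The main obstacle I anticipate is of a bookkeeping nature: one has to verify carefully that the chain $j^*\circ\Sing'\circ l^*$ of Theorem \ref{Th: construction Sing_w} really does represent $L\mapsto\Map(L,T)$ up to weak equivalence, i.e.\ that passing through the simplicial localization $L_W\dg$ and through Dugger's universal model category construction does not destroy the derived representable character of $\Sing_W(T)$. Once this identification is pinned down, and once one confirms that "pushout of cofibrant objects along the generating cofibration $P(n)$" genuinely computes a homotopy pushout in $\dg$, the three conditions fall out uniformly from the principle that derived representables send homotopy colimits to homotopy limits.
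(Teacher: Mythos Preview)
Your proposal is correct and follows essentially the same path as the paper: identify $\Sing(T)(L)\simeq\Map(L,T)$ and then read off each of the three dg-Segal conditions from formal properties of derived mapping spaces. The only difference is in the packaging of Condition~(3): you argue that the square in $\dg$ is a \emph{homotopy pushout} (cofibrant corners, cofibration $P(n)$) and then invoke that $\Map(-,T)$ sends homotopy colimits to homotopy limits; the paper instead checks directly that the induced map $\Map(\D^c(1,n,1),T)\to\Map(\D(1,n,1),T)$ is a \emph{fibration} (since $P(n)$ is a generating cofibration) and uses properness of $\sS$ to conclude that the resulting pullback square is a homotopy pullback. These are two sides of the same coin, and your phrasing is arguably a bit more transparent.

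One small caution: the parenthetical ``free dg-categories being cofibrant in Tabuada's structure'' is not true in general; a free dg-category $L(G)$ is cofibrant only when the underlying graph $G$ has suitable (e.g.\ cofibrant) morphism complexes. This does not break your argument, since the objects under consideration lie in $c\Free$ by hypothesis, but you should remove or qualify that claim.
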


\begin{proof}
 We have to see that $T$ fulfills the three conditions of the definition.  

1. Let $L,K\in\FreeS$ be two cofibrant free dg-categories. As $L,K$ are cofibrant, we have that 
$$\Sing(T)(L\coprod K)=\Map(L\coprod K,T)=\Hom(L\coprod K, C_*(T)).$$
But by definition of coproduct, the condition 1. holds in this case:
$$\Hom(L\coprod K, C_*(T))=\Hom(L,C_*(T))\times \Hom(K,C_*(T))=\Map(L,T)\times \Map(K,T) $$
and we have finished.

2. This condition is obvious: $\Sing(\emptyset)=\Map(-,\emptyset)=*$ by definition of final object.

3. We need to prove that the following diagram is a homotopy pullback
\begin{center}
		\begin{tikzcd}
			\Sing(T)(L(G[\cancel{\alpha}]))=\Map(L(G[\cancel{\alpha}]), T)\ar[r]\ar[d]& \Sing(T)(L(G))=\Map(L(G),T)\ar[d]\\
			\Sing(T)(\D^c(1,n,1))=\Map(\D^c(1,n,1), T)\ar[r]& \Sing(T)(\D(1,n,1))=\Map(\D(1,n,1),T).
		\end{tikzcd}
	\end{center}
	
All these objects are in $\sS$, which is a proper category, which means that if one of these arrows is a fibration, then we have a homotopy pullback. By Tabuada's model structure, we have that $\D(1,s,1)\to \D^c(1,s,1)$ is a generating cofibration in $\dg$, and by construction $\Sing(T)(\D^c(1,s,1))\to \Sing(T)(\D(1,s,1))$ is a fibration. We then have that the previous diagram is a homotopy pullback.

The functor $\Sing(T)$ satisfies the dg-Segal conditions and we have finished our proof.
\end{proof}

\begin{rem}We draw the attention of our readers to the fact that, even though we haven't asked for dg-Segal spaces to be fibrant, by adjointness every $\Sing(T)$ is actually fibrant.
\end{rem}

We have proven that every element in the image of $\Sing$ is a dg-Segal space. We ask the reader to keep that in mind for when we have to prove the essential surjectivity. But for now, let us focus on the model structure we can get for these dg-Segal spaces. 

\begin{teo}There exists a simplicial closed model structure on $\Fun^\S(\FreeS\op,\sS)$ and a class of morphisms $C$ such that
\begin{enumerate}
	\item The cofibrations are the same as in the projective model structure.
	\item The fibrant objects are the dg-Segal spaces which are fibrant for the projective model structure.
	\item The weak equivalences are the $C$-local equivalences with respect to the class $C$.
\end{enumerate}
We call such a model structure \textbf{the dg-Segal model structure.}
\end{teo}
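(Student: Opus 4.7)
The plan is to obtain the dg-Segal model structure as a left Bousfield localization of the projective model structure on $\Fun^\S(\FreeS\op,\sS)$. First I would collect the prerequisites for such a localization. The projective structure exists and is cofibrantly generated since $\sS$ is; it is simplicial by pointwise tensoring; it is left proper because pushouts are computed pointwise and $\sS$ is left proper; and it is combinatorial because $\FreeS$ is essentially small, the finite type condition ensuring that $c\Free$ admits a small skeleton. These are exactly the hypotheses of Smith's (or equivalently Hirschhorn's) existence theorem for left Bousfield localizations of simplicial model categories.

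Next, I would exhibit the set $C$ of localizing morphisms so that its local objects among projectively fibrant functors are precisely the dg-Segal spaces. Let $y : \FreeS \to \Fun^\S(\FreeS\op,\sS)$ denote the enriched Yoneda embedding, and let $\emptyset_{\Fun}$ be the initial object of the functor category. The set $C$ will consist of three families: for each pair $L,K$ in a small skeleton of $c\Free$, the canonical map $y(L) \coprod y(K) \to y(L \coprod K)$; the unique map $\emptyset_{\Fun} \to y(\emptyset)$; and for each pair $(G,\alpha)$ consisting of a finite type graph $G$ and a cocycle $\alpha \in Z^n(G(x,y))$ (again drawn from a small skeleton), the canonical map
$$y(\D^c(1,n,1)) \coprod^{h}_{y(\D(1,n,1))} y(L(G)) \longrightarrow y(L(G[\cancel{\alpha}]))$$
arising from the pushout that defines $G[\cancel{\alpha}]$. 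Passing to small skeleta makes $C$ a bona fide set, which is all that Smith's theorem requires.

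Applying the left Bousfield localization theorem then yields a new simplicial model structure with exactly the same cofibrations as the projective one, whose weak equivalences are the $C$-local equivalences and whose fibrant objects are the projectively fibrant, $C$-local functors. By the enriched Yoneda lemma, $\Map(y(T),F) \simeq F(T)$ for every projectively fibrant $F$, so $C$-locality against the three families above translates, term by term, into the three dg-Segal conditions of Definition \ref{Def: dg-Segal spaces}: conditions (1) and (2) follow immediately, and for (3) the mapping space out of the homotopy pushout in the functor category computes precisely the homotopy pullback in $\sS$ appearing in that condition.

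The main obstacle I anticipate is the set-theoretic bookkeeping: one must verify that restricting to small skeleta loses none of the dg-Segal conditions, which follows because all three conditions are invariant under isomorphism of their arguments, so any finite type graph and any cocycle can be replaced by an isomorphic one inside the chosen skeleton. A secondary check is that the projective model structure is genuinely a $\sS$-enriched model structure; this holds because $\FreeS$, arising as a full simplicial subcategory of the Dwyer--Kan localization $L_W\dg$, is a well-behaved simplicial category and the pointwise tensoring and cotensoring over $\sS$ are compatible with the projective weak equivalences and fibrations.
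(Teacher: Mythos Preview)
Your proposal is correct and follows essentially the same route as the paper: both construct the dg-Segal model structure as a left Bousfield localization of the projective structure at a set $C$ consisting of three families of maps (one for each dg-Segal condition), and both verify via the enriched Yoneda lemma that the $C$-local fibrant objects are exactly the projectively fibrant dg-Segal spaces. The only cosmetic differences are that the paper writes the representables as $\Sing(L)$ rather than $y(L)$ (these agree on $\FreeS$), and that the paper invokes Hirschhorn's left-proper-cellular hypothesis for the existence of the localization whereas you invoke Smith's combinatorial hypothesis; your additional care with small skeleta and with the simplicial enrichment is more than the paper provides, but not a different argument.
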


\begin{proof}
In order to prove this, we are going to use the left Bousfield localization. The first thing we need to do is find a class of morphisms $C$ such that the dg-Segal spaces are exactly the $C$-local objects, i.e. such that for every dg-Segal space $F$ and for every morphism $f:A\to B$ in $C$, the morphism 
$$\Map(B,F)\to\Map(A,F) $$
is a weak equivalence. For that, we define three classes of morphisms, one for each condition in the definition of a dg-Segal space.

1. Our first class of morphisms is $C_1=\{\Sing(L)\coprod\Sing(L')\to \Sing(L\coprod L')/\ L,L'\in \FreeS\}$. Let us check that the $C_1$-local objects are exactly the functors that satisfy the first condition of the definition of dg-Segal spaces. Let $F$ be a functor in $\Fun^\S(\FreeS\op,\sS)$ that is a $C_1$-local object. Then, by the Yoneda Lemma, we have that 
$$\Map(\Sing(L\coprod L'),F)\simeq F(L\coprod L')\to \Map(\Sing(L)\coprod\Sing(L'),F)\simeq F(L)\times F(L') $$
is a weak equivalence. By construction, $F$ satisfies the first dg-Segal condition. 

2. As the second condition of the definition of a dg-Segal space is one single weak equivalence, the class of morphisms associated to it will also have just one morphism. We consider $C_2=\{\emptyset\to \Sing(\emptyset\}$. Let $F$ be a $C_2$-local object. Then, also by the Yoneda lemma, we have that
$$\Map(\Sing(\emptyset),F)\simeq F(\emptyset)\to \Map(\emptyset,F)\simeq * $$
is a weak equivalence, and $F$ satisfies the second dg-Segal condition.

3. Lastly, we take the class 
$$C_3=\{\Sing(L(G))\coprod_{\Sing(\D(1,s,1))}\Sing(\D^c(1,s,1))\to \Sing(L(G[\cancel{\alpha}])/$$ $$G\in Gr(\Ch),s\in\Z, x,y\in\Ob(G), \alpha\in Z^n(x,y)\}.$$ Let $F$ be a $C_3$-local object. Then, by the Yoneda lemma, we have that the morphism 
$$\Map(\Sing(L(G[\cancel{\alpha}])),F)\simeq F(L(G[\cancel{\alpha}]))\to $$
$$\Map(\Sing(L(G))\coprod_{\Sing(\D(1,n,1))}\Sing(\D^c(1,n,1)), F)\simeq F(L(G))\times_{\Sing(\D(1,n,1))}F(\D^c(1,n,1)) $$
is a weak equivalence. Alternatively, that means that the diagram in condition 3 of the dg-Segal condition is a homotopy pullback and $F$ satisfies the third dg-Segal condition.

We take the class of morphisms $C=C_1\cup C_2\cup C_3$ to be our $C$ in the Bousfield localization. If such a localization exists, its fibrant objects will be exactly the dg-Segal spaces which are fibrant for the projective model structure.

We know that if the category we are trying to localize is a left proper cellular model category, then the left Bousfield localization exists. But the category of simplicial sets $\sS$ is left proper and cellular, and the functors on it are also left proper and cellular. So this localization exists and we have finished.
\end{proof}

Let us construct a Quillen adjoint between the model category $\Fun^\S(\FreeS\op,\sS)$ and the model category $\Fun(\Delta\op, \sS)$.

\begin{prop}\label{Prop: delinearisation}There exists a morphism, called \textbf{the linearisation of $\Delta$}, between the categories $\Delta$ and $\FreeS$, and it defines a Quillen adjunction between the categories $\Fun^\S(\FreeS\op,\sS)$ and $\Fun(\Delta\op, \sS)$ with their respective projective structures.
\end{prop}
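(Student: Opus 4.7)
The plan is to define the \emph{linearisation} as an ordinary functor $j:\Delta\to c\Free$ sending the ordinal $[n]$ to the free dg-category on the linear graph with objects $0,1,\ldots,n$ and a single generator $\tau_i:i\to i+1$ (one copy of $k$ concentrated in degree $0$) for each $0\le i<n$. A monotone map $\phi:[n]\to[m]$ is sent to the dg-functor that acts as $\phi$ on objects and sends $\tau_i$ to the composite $\tau_{\phi(i+1)-1}\cdots\tau_{\phi(i)}$, understood as the identity when $\phi(i)=\phi(i+1)$. Each $j([n])$ has finitely many objects and hom-complexes of the form $k$ in degree $0$ (hence perfect), so it lies in $c\Free$ in the sense of Definition~\ref{Def: finite type}.

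Composing with the canonical simplicial functor $c\Free\hookrightarrow\FreeS$ induced by the localization $\dg\to L_W\dg$, and viewing $\Delta$ as a simplicially discrete category, yields a simplicial functor $j:\Delta\to\FreeS$. Since $\Delta$ is simplicially discrete we have $\Fun^\S(\Delta\op,\sS)=\Fun(\Delta\op,\sS)$, so the standard enriched Kan extension machinery applied to $j$ produces an adjunction
$$j_!:\Fun(\Delta\op,\sS)\rightleftharpoons\Fun^\S(\FreeS\op,\sS):j^*,$$
in which $j^*$ is precomposition with $j$ (this is the delinearisation) and $j_!$ is the simplicial left Kan extension; the existence of $j_!$ follows from the cocompleteness of $\sS$ and the smallness of $\Delta$.

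To upgrade this to a Quillen adjunction for the two projective model structures, I would simply observe that in both $\Fun(\Delta\op,\sS)$ and $\Fun^\S(\FreeS\op,\sS)$ fibrations and weak equivalences are detected objectwise in $\sS$. Restriction along any (simplicial) functor trivially preserves objectwise fibrations and objectwise weak equivalences, so $j^*$ is a right Quillen functor, and hence $j_!\dashv j^*$ is a Quillen adjunction. No real obstacle arises: the whole statement is formal once the explicit combinatorial definition of $j$ on morphisms has been written down; the only mild bookkeeping is checking that the formula $\tau_{\phi(i+1)-1}\cdots\tau_{\phi(i)}$ respects composition of monotone maps, which is an immediate consequence of associativity in the free dg-categories $j([m])$.
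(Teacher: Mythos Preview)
Your proof is correct and follows the same approach as the paper: define $j([n])$ as the free dg-category on the linear quiver with objects $0,\ldots,n$ and a copy of $k$ in degree~$0$ between consecutive objects, pass to $\FreeS$, and form the enriched Kan adjunction $j_!\dashv j^*$. The paper's own argument is terser---it writes down $j([n])$, notes it is cofibrant and of finite type, and simply asserts the resulting Quillen adjunction---so your version supplies the action on monotone maps and the right-Quillen verification that the paper leaves implicit.
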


\begin{proof}
Let $[n]\in\Delta$ be an object in $\Delta$. We define $j([n])=[n]\times k$ to be a free dg-category such that 
\begin{center}
	\begin{tikzcd}
		j([n])=0\ar[r,"k"]& 1\ar[r,"k"]&\cdots\ar[r,"k"]&n.
	\end{tikzcd}
\end{center}
This is a free dg-category of finite type, and it is also cofibrant, so this morphism $j$ is well-defined as $\Delta\to \FreeS$. We construct then the following Quillen adjunction:
$$j_!:\Fun(\Delta\op,\sS)\simeq \Fun^\S(\Delta\op, \sS)\rightleftharpoons \Fun^\S(\FreeS\op,\sS):j^* $$
and we have finished our proof.
\end{proof}

\begin{nota}
    We denote the images by the linearization morphism by $j(\l n\r)=\D^n$. 
\end{nota}

\begin{defin}
    We call the morphism $j^*: \Fun^\S(\FreeS\op,\sS)\to \Fun^\S(\Delta\op, \sS)$ the \textbf{delinearlisation morphism}.
\end{defin}

Now, we have calculated that Quillen adjunction for the projective model structure. But we have two localizations here: let us prove that this stays a Quillen adjunction in the localizations.

\begin{prop}\label{Prop: dg-Segal are Segal}Let $F\in\dgS$ be a dg-Segal space. Then its image by the delinearisation morphism $j^*$ is a Segal space.
\end{prop}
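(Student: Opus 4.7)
The plan is to verify that $j^*F$ satisfies the two defining conditions of a Segal space: Reedy fibrancy as a simplicial space, and the Segal equivalence condition on its Segal maps. First, I unpack the functor: $(j^*F)_n = F(j([n])) = F(\D^n)$, where $\D^n$ is the free dg-category on the linear chain $0 \to 1 \to \cdots \to n$ with each edge the complex $k$ concentrated in degree zero; in particular $\D^0 = k$ and $\D^1 = \D(1,0,1)$.

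Reedy fibrancy is the more routine piece. Since $F$ is fibrant in $\dgS$, in particular it is projectively fibrant, so each $F(\D^n)$ is a Kan complex. The matching object $M_n(j^*F)$ is a finite limit of $F(\D^k)$ for $k<n$ indexed by face inclusions; on the source side this corresponds to a colimit in $\FreeS$ built from pushouts of cofibrations between cofibrant objects, which $F$ sends to a homotopy limit by the first and third dg-Segal conditions. It follows that the matching map is a Kan fibration. If preferred, one can instead replace $j^*F$ by its Reedy fibrant replacement, which is a levelwise equivalence and thus preserves every relevant condition.

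The main content will be the Segal condition: for each $n\geq 2$,
\[
F(\D^n) \longrightarrow F(\D^1) \times^h_{F(\D^0)} F(\D^1) \times^h_{F(\D^0)} \cdots \times^h_{F(\D^0)} F(\D^1)
\]
must be a weak equivalence. The key idea is to present $\D^n$ as an iterated cell attachment of the kind controlled by the dg-Segal conditions. I start from $G_0 = k^{\sqcup(n+1)}$, the disjoint union of $(n+1)$ one-object dg-categories, and for each $i = 1,\ldots,n$ attach a closed degree-$0$ morphism from object $i-1$ to object $i$. Each such attachment is realised as a pushout of the generating cofibration $P(1):\D(1,1,1)\to\D^c(1,1,1)$ along the zero map, i.e.\ it is exactly the graph construction $G \mapsto G[\cancel{\alpha}]$ with $\alpha = 0 \in Z^1$; thus the third dg-Segal condition applies, turning each step into a homotopy pullback after $F$, while the first dg-Segal condition supplies $F(G_0) \simeq F(\D^0)^{n+1}$.

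Pasting the resulting chain of homotopy pullback squares and using that $F(\D^1)$ itself admits the analogous description
\[
F(\D^1) \simeq F(\D^0)^{\times 2} \times^h_{F(\D(1,1,1))} F(\D^c(1,1,1)),
\]
I will identify $F(\D^n)$ with the Segal iterated pullback displayed above. The main obstacle I anticipate is the bookkeeping: one must verify that the source and target structure maps arising from the successive cell attachments line up with the face-map identifications used to form the Segal pullback. This is conceptually straightforward but notationally heavy, and it is where most of the work of writing out the full proof will go.
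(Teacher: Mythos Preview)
Your approach is essentially the same as the paper's. Both build $\D^n$ by attaching edges one at a time via the construction $G\mapsto G[\cancel{\alpha}]$ with $\alpha=0$, invoke the third dg-Segal condition to turn each attachment into a homotopy pullback after applying $F$, and use the first condition to split the initial coproduct. The only organizational difference is that the paper packages this as an induction on $n$, attaching the single edge from $n-1$ to $n$ to $\D^{n-1}\coprod k$ and deducing $F(\D^n)\simeq F(\D^{n-1})\times^h_{F(k)}F(\D^1)$, whereas you attach all $n$ edges to $k^{\sqcup(n+1)}$ and paste the resulting squares. The inductive version is lighter on bookkeeping and sidesteps exactly the face-map matching you flag as the main obstacle, so you may prefer to adopt it when writing things out.

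One small correction: membership in $\dgS$ does not by itself make $F$ projectively fibrant (the paper is explicit about this in the remark following Proposition~\ref{Prop: Sing is dg-Segal}), so your first argument for Reedy fibrancy starts from an unjustified premise. Your fallback of passing to a Reedy fibrant replacement is the right move; in fact the paper's own proof is silent on Reedy fibrancy and verifies only the Segal maps.
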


\begin{proof}
Let $F$ be a dg-Segal space. Then, in order to prove that its image by $j^*$ is a Segal space, by definition of Segal space we need to prove that for all $n\geq 1$, the morphism
$$j^*(F)_n\to  \overbrace{j^*(F)_1\times_{j^*(F)_0}\cdots\times_{j^*(F)_0}j^*(F)_1}^{\text{$n$ times}}$$
is a weak equivalence. If we unravel that definition, we have that for every $i\in\N$, $j^*(F)_i=j^*(F)([i])=F(j([i]))=F(\D^i)$. So proving that $j^*(F)$ is a Segal space can be rewritten as asking that for all $n\geq 1$,
$$\Phi_n:F(\D^n)\to F(\D^1)\times_{F(\D^0)}\cdots\times_{F(\D^0)}F(\D^1)$$
is a weak equivalence of simplicial spaces.

 We remark that $F(\D^0)=F(k)$.

We will prove the proposition by induction.

\begin{itemize}
	\item $n=1$. This is obvious, since $F(\D^1)\simeq F(\D^1)$. There is nothing to prove.
	\item $n\geq 2$. We assume that the morphism 
	$$\Phi_{n-1}: F(\D^{n-1})\to  \overbrace{F(\D^1)\times_{F(k)}\cdots\times_{F(k)}F(\D^1)}^{\text{$n-1$ times}} $$
	is a weak equivalence. Let us prove that the morphism $\Phi_n$ is also a weak equivalence.
	
	As usual for inductions, we have to decompose $F(\D^n)$ in a way that makes $F(\D^{n-1})$ appear. In this case, we will use the properties of a dg-Segal category to do so. We define $G^0$ to be a graph of the following form:
	\begin{center}
		\begin{tikzcd}
		G^0=j([n-1])\coprod *=0\ar[r,"k"]& 1\ar[r,"k"]&\cdots\ar[r,"k"]& n-1\ar[r,"0"]&n
		\end{tikzcd}
	\end{center}
	Then, we can construct $\D^n$ as the following pushout:
	\begin{center}
		\begin{tikzcd}
		\D(1,-1,1)\ar[r]\ar[d, "\alpha"]&\D^c(1,-1,1)\ar[d]\\
		L(G^0)\ar[r]& \D^n
		\end{tikzcd}
	\end{center}
	By using the third dg-Segal condition, we can write $F(\D^n)$ in the following way:
	$$F(\D^n)\simeq F(G^0)\times_{F(\D(1,-1,1))}F(\D^c(1,-1,1))\simeq F(\D^{n-1}\coprod k)\times_{F(\D(1,-1,1))}F(k\coprod k)). $$
	By the first dg-Segal condition, we can make those coproducts commute with $F$ in the following way:
	$$F(\D^n)\simeq F(\D^{n-1}\coprod k)\times_{F(\D(1,-1,1))}F(k\coprod k))\simeq (F(\D^{n-1})\times F(k))\times_{F(\D(1,-1,1))}(F(k)\times F(k)). $$
	Now, in particular, if we take $n=1$, we get the following formula: 
	$$F(\D^1)\simeq (F(\D^0)\times F(k))\times_{F(\D(1,-1,1))}F(k)^2=F(k)^2\times_{F(\D(1,-1,1))} F(k)^2.$$ 
	We are almost there. If we add and subtract one $F(k)$ to the formula of $F(\D^n)$ we will be done.
	$$F(\D^n)\simeq (F(\D^{n-1})\times F(k))\times_{F(\D(1,-1,1))}F(k)^2\simeq F(\D^{n-1})\times_{F(k)}(F(k)\times F(k))\times_{F(\D(1,-1,1))}F(k)^2 $$
	and we have that $F(\D^n)\simeq F(\D^{n-1})\times_{F(k)}F(\D^1)$. By the induction hypothesis, we have 
	$$F(\D^n)\simeq \overbrace{F(\D^1)\times_{F(\D^0)}\cdots\times_ {F(\D^0)}F(\D^1)}^{\text{$n-1$ times}}\times_{F(\D^0)} F(\D^1).$$
\end{itemize}
	The image $j^*(F)$ is a Segal space and we have finished our proof.
\end{proof}

\begin{coro}The adjunction $j_!:\Fun(\Delta\op,\sS)\rightleftharpoons \Fun^\S(\FreeS\op,\sS):j^*$ is a Quillen adjunction for the Segal and dg-Segal model structures, respectively.
\end{coro}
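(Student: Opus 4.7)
The plan is to apply the standard criterion for a Quillen adjunction between projective model structures to descend to their left Bousfield localizations. Recall that if $F : M \rightleftharpoons N : G$ is a Quillen adjunction and $M$, $N$ carry left Bousfield localizations at classes $S$, $T$ respectively, then the adjunction remains Quillen for the localized structures as soon as the right adjoint $G$ sends fibrant objects of $L_T N$ to $S$-local (projectively fibrant) objects of $M$. Equivalently, the left derived functor $\L F$ must send $S$ to weak equivalences in $L_T N$. Since we already have Proposition \ref{Prop: delinearisation} giving the Quillen adjunction $(j_!,j^*)$ for the projective model structures, it is enough to verify this single local criterion.

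Concretely, I would proceed as follows. First, note that because $j^*$ is right Quillen for the projective structures, it preserves projective fibrancy; so it is enough to check that $j^*$ sends dg-Segal spaces to Segal spaces. But this is exactly the content of Proposition \ref{Prop: dg-Segal are Segal}: for $F$ a dg-Segal space, the simplicial space $j^*(F)$ satisfies the Segal condition $j^*(F)_n \simeq j^*(F)_1 \times_{j^*(F)_0} \cdots \times_{j^*(F)_0} j^*(F)_1$. Consequently, $j^*$ takes fibrant objects of the dg-Segal model structure (projectively fibrant dg-Segal spaces) to fibrant objects of the Segal model structure (projectively fibrant Segal spaces), and the Quillen adjunction descends.

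Alternatively, one could phrase the verification on the left-adjoint side: the localizing class defining the Segal model structure on $\Fun(\Delta\op,\sS)$ consists of the spine inclusions $\mathrm{sp}[n] \hookrightarrow \Delta[n]$. By the explicit description of $j_!$ on representables, $j_!(\Delta[n]) \simeq \Sing(\D^n)$ and $j_!(\mathrm{sp}[n])$ is computed as the iterated homotopy pushout $\Sing(\D^1) \coprod^h_{\Sing(k)} \cdots \coprod^h_{\Sing(k)} \Sing(\D^1)$; using the description of the classes $C_1, C_2, C_3$ defining the dg-Segal localization, one sees directly that $\L j_!$ sends each spine inclusion to a $C$-local equivalence.

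The argument is essentially a formality once Proposition \ref{Prop: dg-Segal are Segal} is in hand, so I do not anticipate a genuine obstacle; the only point that would require care is ensuring the compatibility of the projective simplicial model structures used on both sides with the respective Bousfield localizations (left properness and cellularity are inherited from $\sS$, so the localizations exist and behave as expected). No further computation beyond what is already in Proposition \ref{Prop: dg-Segal are Segal} is needed.
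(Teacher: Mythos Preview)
Your proposal is correct and matches the paper's own proof essentially verbatim: the paper invokes the criterion (cited there as Proposition~7.15 of Joyal--Tierney) that it suffices for $j_!$ to preserve cofibrations and $j^*$ to preserve fibrant objects, observes that cofibrations are unchanged by left Bousfield localization, and then uses Proposition~\ref{Prop: dg-Segal are Segal} to conclude that $j^*$ sends dg-Segal spaces to Segal spaces. Your alternative left-adjoint argument via spine inclusions is extra and not in the paper, but the primary argument is the same.
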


\begin{proof}
This result is a direct consequence of the last proposition. Indeed, by Proposition 7.15 in  \cite{JTquasi-Segal}, we know that if we have an adjunction between two model categories $F:M\rightleftharpoons N:G$ and we want to prove it is a Quillen adjunction, then we only need to prove that $F$ preserves cofibrations and $G$ preserves fibrant objects. 

 Now, the left Bousfield localization doesn't change cofibrations, and our adjunction was already a Quillen adjunction on the projective model structure; $j_!$ preserves cofibrations. We only have left to prove that $j^*$ preserves fibrant objects. But the fibrant objects in the dg-Segal model structure are the dg-Segal spaces that were fibrant in the projective model structure. By the last proposition, $j^*$ sends dg-Segal spaces to Segal spaces, and because it is already a Quillen adjunction in the projective model structure, it preserves fibrant objects in that structure. So it preserves fibrant objects in the dg-Segal model structure.

The adjunction  is a Quillen adjunction on the Segal and dg-Segal model structures, respectively, and we have finished our proof.
\end{proof}

\subsection{Complete dg-Segal spaces}

When trying to characterize $\infty$-categories using Segal spaces, we realize quickly that the definition of Segal spaces that has been already given is not enough. Indeed, there is a class of morphisms, called Dwyer-Kan morphisms, that should be equivalences but aren't. In \cite{ComSegalSpacesREZK}, in his quest to invert those, the author gets to the following result: 

\begin{defin}We define $E(1)$ to be the discrete space given at level $n\in\N$ by
$$E(1)_n=\{x,y\}^{[n]}, $$
i.e. by two non-degenerate cells on each level. Those are given by $(xy)^{n/2}$ and $(yx)^{n/2}$ if $n$ is odd and $(xy)^{(n-1)/2}$ and $(yx)^{(n-1)/2}$ if $n$ is odd.
\end{defin}

\begin{prop}\cite[Prop. 7.6]{ComSegalSpacesREZK}, \cite[Prop. 4.5]{Nima} Let $f:V\to W$ be a map between two Segal spaces. We assume that the morphism
$\Map(E(1),X)\to \Map(*,X)$
is a weak equivalence, with both $X=V$and $W$, for a certain morphism $ *\to E(1)$. Then $f$ is a Dwyer-Kan equivalence if and only if it is a weak equivalence.
\end{prop}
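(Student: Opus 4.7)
The plan is to prove the two implications separately. The ``only if'' direction (weak equivalence implies Dwyer--Kan equivalence) is straightforward and does not use the completeness hypothesis: since $V$ and $W$ are both Reedy fibrant, a weak equivalence between them is levelwise, which immediately gives an equivalence on $V_0$ (hence a bijection on $\pi_0(V_0)$, and hence essential surjectivity of $\Ho(f)$), together with equivalences on the homotopy fibers of $V_1 \to V_0 \times V_0$, which are exactly the mapping spaces.

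For the harder ``if'' direction, suppose $f$ is a Dwyer--Kan equivalence. First I would reduce the problem: by the Segal condition,
\[
V_n \simeq \overbrace{V_1 \times^h_{V_0} \cdots \times^h_{V_0} V_1}^{n \text{ times}},
\]
and the analogous formula holds for $W_n$. Hence it suffices to show that $f_0 \colon V_0 \to W_0$ and $f_1 \colon V_1 \to W_1$ are weak equivalences of simplicial sets.

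The main step is $f_0$, and this is where completeness enters. Let $X_{\mathrm{heq}} \subseteq X_1$ denote the union of path components whose $0$-simplices are homotopy equivalences in the Segal space $X$. The map $\Map(E(1), X) \to \Map(*, X) = X_0$ factors through $X_{\mathrm{heq}}$, and the completeness hypothesis for $X$ is equivalent to the statement that the degeneracy $s_0 \colon X_0 \to X_{\mathrm{heq}}$ is a weak equivalence. So to prove that $f_0$ is a weak equivalence it is enough to show that the induced map $V_{\mathrm{heq}} \to W_{\mathrm{heq}}$ is. Essential surjectivity of $\Ho(f)$ gives the surjection on $\pi_0$. For higher homotopy and injectivity on $\pi_0$, I would use that the fibers of $X_{\mathrm{heq}} \to X_0 \times X_0$ over $(x, y)$ are unions of path components of the mapping space $\mathrm{map}_X(x, y)$, which are identified for $V$ and $W$ by full faithfulness; then a long exact sequence argument on the relevant fibration, together with the $\pi_0$-surjectivity above, yields the levelwise equivalence $V_{\mathrm{heq}} \to W_{\mathrm{heq}}$, and hence $V_0 \simeq W_0$.

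Once $f_0$ is known to be a weak equivalence, $f_1$ is straightforward: in the commuting square with $f_1$ on top and $f_0 \times f_0$ on the bottom of the natural projections $X_1 \to X_0 \times X_0$, the homotopy fibers are the mapping spaces, equivalent by full faithfulness, so $f_1$ is a weak equivalence; the Segal reduction then concludes. The main obstacle will be the argument for $f_0$: disentangling the apparent circularity by which the Dwyer--Kan hypothesis only gives information about $\pi_0$ of $X_0$ and about mapping spaces, whereas completeness is precisely what allows this to be upgraded to a full levelwise equivalence on $X_0$.
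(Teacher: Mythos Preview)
The paper does not give its own proof of this proposition: it is stated purely as background, with citations to \cite[Prop.~7.6]{ComSegalSpacesREZK} and \cite[Prop.~4.5]{Nima}, and the text moves on immediately to the definition of complete Segal spaces. So there is nothing in the paper to compare your proposal against.

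That said, your outline is the standard Rezk argument and is essentially correct. One comment on the step you yourself flag as the obstacle: your proposed route to $f_0$ via the fibration $X_{\mathrm{heq}}\to X_0\times X_0$ and a long exact sequence is indeed circular as written, since the base map is $f_0\times f_0$. The clean way to break the circle is to compute homotopy groups of $V_0$ directly through completeness: $\pi_0(V_0)\cong\pi_0(V_{\mathrm{heq}})$ is the set of isomorphism classes in $\Ho(V)$, on which a Dwyer--Kan equivalence is bijective; and for $n\geq 1$ and a basepoint $x$, completeness identifies $\pi_n(V_0,x)$ with $\pi_n$ of the identity component of $\mathrm{map}_V(x,x)$, which full faithfulness matches with the corresponding group for $W$. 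This gives $f_0$ a weak equivalence without ever invoking the fibration over $V_0\times V_0$, and your argument for $f_1$ then goes through unchanged.
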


But that just means that the Dwyer-Kan equivalences are equivalences if they are between two $C$-local objects in $\sS$, where $C$ is a class with one object, $*\to E(1)$. So they define a new concept, that of complete Segal spaces, which are just the objects we have just defined.

\begin{defin}\cite[Def. 4.1, Section 6]{ComSegalSpacesREZK} \cite[Prop. 4.5]{Nima} Let $W$ be a Reedy fibrant simplicial space. We say that $W$ is a \textbf{complete Segal space} if the maps 
$$W_k\to \overbrace{W_1\times_{W_0}\cdots\times_{W_0}W_1}^{\text{$k$ times}} \text{  and  }\Map(E(1),W)\to\Map(*,W)$$
are weak equivalences for all $k\geq 1$. Or, equivalently, 
$$W_k\to \overbrace{W_1\times_{W_0}\cdots\times_{W_0}W_1}^{\text{$k$ times}} \text{  and  } W_0\to W_{hoequiv}$$
are weak equivalences for all $k\geq 1$ and $W_{hoequiv}$ the space of homotopy equivalences.
\end{defin}

And now Rezk uses the left Bousfield localization with $\{*\to E(1)\}$ as his $C$ and the complete Segal spaces as his $C$-local objects in order to have a model category that works for him.

\begin{teo}\cite{ComSegalSpacesREZK} There exists a simplicial closed model structure on the category of simplicial spaces, with the following properties:
\begin{enumerate}
	\item The cofibrations are precisely the monomorphisms.
	\item The fibrant objects are precisely the complete Segal spaces.
	\item The weak equivalences are precisely the Dwyer-Kan equivalences between complete Segal spaces.
\end{enumerate}
\end{teo}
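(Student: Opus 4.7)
The plan is to obtain this model structure as a two-step left Bousfield localization of the Reedy model structure on $\Fun(\Delta\op,\sS)$, and then separately identify the weak equivalences between fibrant objects with Dwyer-Kan equivalences. First I would recall that the Reedy model structure on bisimplicial sets exists, is cellular and left proper, and has as its cofibrations exactly the monomorphisms (this is essentially because the Reedy cofibrations in $\Fun(\Delta\op,\sS)$ for $\sS$ with its standard monomorphism cofibrations coincide with levelwise monomorphisms, which in this bisimplicial context are just monomorphisms of bisimplicial sets). Since left Bousfield localization preserves the class of cofibrations, property (1) will then hold automatically in whatever localization I end up taking.

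Next I would localize twice. For the Segal condition, I would localize at the set of ``spine inclusions'' $G(n) \hookrightarrow F(n)$ (where $F(n)=\Delta^n\times\Delta^0$ and $G(n)$ is the iterated pushout $\Delta^1\cup_{\Delta^0}\cdots\cup_{\Delta^0}\Delta^1$), and verify by the standard characterization of fibrant objects in a left Bousfield localization that the fibrant objects are exactly the Reedy fibrant Segal spaces. Then I would perform a further localization at the single map $*\to E(1)$; the fibrant objects now must be Segal spaces $W$ for which $\Map(E(1),W)\to\Map(*,W)$ is a weak equivalence, and by the reformulation recalled just before the theorem this is exactly the completeness condition $W_0\to W_{hoequiv}$. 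This verifies property (2). Since $\Fun(\Delta\op,\sS)$ with its Reedy structure is left proper and cellular, both localizations exist and the intermediate properties (simplicial structure, properness) survive.

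For property (3), the weak equivalences of the final model structure are by definition the maps $f\colon V\to W$ which become weak equivalences after fibrant replacement, equivalently the maps such that $\Map(W,Z)\to\Map(V,Z)$ is a weak equivalence for every complete Segal space $Z$. Between complete Segal spaces $V,W$ (both already fibrant), this reduces to asking that $f$ be a Reedy weak equivalence. So the task is to show that, between complete Segal spaces, Reedy weak equivalences coincide with Dwyer-Kan equivalences. The ``Reedy $\Rightarrow$ DK'' direction is straightforward since one can read off the homotopy category and mapping spaces from the levels $W_0,W_1$. The hard direction is the converse: given a DK-equivalence $f\colon V\to W$ between complete Segal spaces, one must show it is a levelwise equivalence. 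I would argue this level by level, using the Segal condition to reduce $V_n, W_n$ to iterated homotopy fiber products over $V_0, W_0$ of $V_1, W_1$, thereby reducing to levels $0$ and $1$. Fully faithfulness of $f$ handles level $1$ fiberwise over $V_0\times V_0$; the core difficulty is level $0$, where one must exploit completeness to go from ``essentially surjective on homotopy categories'' (a $\pi_0$-statement) to an actual weak equivalence $V_0\to W_0$ of spaces of objects. This is the Rezk argument using the ``path object'' $E(1)$: completeness identifies $V_0$ with the space of homotopy equivalences inside $V_1$, and DK-equivalence of $f$ implies a bijection on equivalence classes together with fiberwise equivalences, which combined with the identification furnished by completeness forces $V_0\to W_0$ to be a weak equivalence. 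This last step, transferring a $\pi_0$-level essential surjectivity into a space-level equivalence via the completeness axiom, is the real obstacle and the whole reason the class $E(1)$ had to be inverted in the first place.
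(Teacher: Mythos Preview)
The paper does not actually prove this theorem: it is quoted verbatim from \cite{ComSegalSpacesREZK} as a background result, with no argument given. Your proposal is a correct and faithful sketch of Rezk's original two-step Bousfield localization argument (Reedy $\leadsto$ Segal $\leadsto$ complete Segal), including the identification of weak equivalences between fibrant objects with Dwyer-Kan equivalences via the completeness condition; there is nothing to compare against in the present paper, and nothing substantive to correct in your outline.
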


In our case we will see that, mirroring the classic situation, the dg-Segal spaces we have defined are also not enough to completely characterize our dg-categories. Indeed, we can't prove that the functor $\Sing$ is fully faithful; we will be able to do so up to a certain morphism, that we will call a DK-equivalence. As such, we will need to do another left Bousfield localization in order to invert those, and then define its local objects to be our complete dg-Segal spaces.

Instead of doing it in that order, though, we will start with the definition of the complete dg-Segal model structure. For that, we will use the linearisation functor that we defined on the last section. 

\begin{defin}Let $E(1)$ be as defined above. We define
$$E_k=\L(j_!(E(1))). $$
\end{defin}

\begin{rem}We warn our readers of the fact that $E_k$ is not the image of a dg-category through our functor $\Sing$; in fact, it isn't even a dg-Segal space!
\end{rem}

\begin{teo}\label{Th: complee dg-Segal model structure}There exists a class of morphisms $C$ and a simplicial closed model structure on $\Fun^\S(\FreeS\op,\sS)$ such that
\begin{enumerate}
	\item The cofibrations are the same as in the projective model structure.
	\item The fibrant objects are the dg-Segal spaces which satisfy that $F(k)\to j^*(F)_{hoequiv}$ is a weak equivalence of simplicial spaces and that are fibrant for the projective model structure.
	\item The weak equivalences are the $C$-local equivalences.
\end{enumerate}
We call such a model structure \textbf{the complete dg-Segal model structure.}
\end{teo}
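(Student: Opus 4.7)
The plan is to obtain the complete dg-Segal model structure as a further left Bousfield localization of the dg-Segal model structure built in the previous theorem. Let $C_0 = C_1 \cup C_2 \cup C_3$ denote the class used there, and introduce a new class $C_4$ consisting of the single morphism $\Sing(k) \to E_k$ (replaced cofibrantly if necessary so that it fits the cellular-localization setup). I then take $C = C_0 \cup C_4$ and show that localizing the dg-Segal model structure at $C_4$ produces the desired structure.

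First I would check that the dg-Segal model structure is left proper and cellular. The projective structure on $\Fun^\S(\FreeS\op,\sS)$ inherits both properties from $\sS$, and Hirschhorn's theorem on left Bousfield localizations guarantees that these properties descend to the dg-Segal structure. Consequently, the further left Bousfield localization at $C_4$ exists. By the general theory of left Bousfield localizations, this localization has the same cofibrations as the projective structure (condition 1), its weak equivalences are precisely the $C$-local equivalences (condition 3), and its fibrant objects are exactly the fibrant objects of the dg-Segal structure that are additionally $C_4$-local.

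The remaining work is to identify the $C_4$-local dg-Segal fibrant objects with the class described in condition 2. For such an $F$, the Yoneda lemma gives $\Map(\Sing(k), F) \simeq F(k)$, while the $(j_!, j^*)$ Quillen adjunction of Proposition \ref{Prop: delinearisation} yields a homotopy equivalence $\Map(E_k, F) \simeq \Map(E(1), j^*(F))$, since $E_k = \L j_!(E(1))$ by definition and $F$ is fibrant. Hence $C_4$-locality of $F$ translates into $\Map(E(1), j^*(F)) \to \Map(*, j^*(F))$ being a weak equivalence. By Proposition \ref{Prop: dg-Segal are Segal}, $j^*(F)$ is a Segal space, so Rezk's classical criterion (the proposition quoted from \cite{ComSegalSpacesREZK}) reformulates this as the map $j^*(F)_0 \to j^*(F)_{hoequiv}$ being a weak equivalence. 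Since $j^*(F)_0 = F(j([0])) = F(k)$, this is precisely the completeness condition in the statement.

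The main obstacle I expect is the derived-adjunction identity $\Map(E_k, F) \simeq \Map(E(1), j^*(F))$. Because $E_k$ is defined via the left derived functor $\L j_!$, one must be careful with cofibrant replacement of $E(1)$ in $\Fun(\Delta\op,\sS)$ and use that $F$ is fibrant in the dg-Segal structure to ensure that the derived mapping-space identity holds on the nose; the reduction to the classical Rezk criterion then requires that the cofibrant replacement of $E(1)$ remains well-behaved after passing through $j_!$. Once this technical point is settled, the three conclusions follow directly from the cellular-localization machinery together with the bookkeeping just described.
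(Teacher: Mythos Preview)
Your proposal is correct and follows essentially the same route as the paper: a further left Bousfield localization of the dg-Segal model structure at the single map between $E_k$ and $\Sing(k)$, followed by the adjunction identification $\Map(E_k,F)\simeq\Map(E(1),j^*(F))$ and $\Map(\Sing(k),F)\simeq\Map(*,j^*(F))$ to reduce to Rezk's completeness criterion for $j^*(F)$. The paper uses the map $E_k\to\Sing(k)$ rather than your $\Sing(k)\to E_k$, but since $*\to E(1)\to *$ is a retraction this is immaterial, and the paper dispatches both the cellular/proper check and the derived-adjunction identity in a single sentence where you are (reasonably) more cautious.
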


\begin{proof}
The idea of this proof is to transport the complete Segal model structure to our new setting via the linearisation functor. Indeed, we can construct this by using a left Bousfield localization on the dg-Segal model structure by the class $C=\{\Phi:E_k\to \Sing(k)\}=\{\Phi:j_!(E(1))\to j_!(*)\}$. We won't write down the details about whether this gives us an actual localization, as they are the exact same as in the construction of the dg-Segal model structure.

The only thing left to do is to see that the $C$-local objects are really as we have defined them above. Let $F$ be a $C$-local object. We have that $F$ is a dg-Segal space that is fibrant for the projective structure, and also that the following morphism
$$\Map(\Sing(k),F)\to\Map(E_k,F) $$
is a weak equivalence. By the definition of an adjunction, we have that 
$$\Map(E_k,F)=\Map(j_!(E(1)),F)\simeq \Map(E(1),j^*(F))\text{ and }\Map(\Sing(k),F)=\Map(j_!(*),F)\simeq \Map(*,j^*(F)).$$
So the condition is equivalent to asking for $\Map(*,j^*(F))\to \Map(E(1),j^*(F))$ to be a weak equivalence. And that is exactly the definition of $j^*(F)$ being a complete Segal space. So if $F$ is a $C$-local object, then $F$ is a dg-Segal space such that $j^*(F)_0=F(k)\to j^*(F)_{hoequiv}$ is a weak equivalence. By the construction of a Bousfield localization, a fibrant object for the complete dg-Segal model structure is such an $F$ which is also fibrant for the projective model structure.

We have all the conditions we needed and our proof is complete.
\end{proof}

\begin{rem}It is important to remember that $j^*(F)_{hoequiv}$ is the subset of $j^*(F)_1$ whose 0-simplexes are homotopy equivalences. Considering we've already seen that $j^*(F)_1=F(\D^1)$, we can rewrite $j^*(F)_{hocolim}$ as the subset of $F(\D^1)$ whose 0- simplexes are homotopy equivalences, with no mentions of the delinearisation morphism.
\end{rem}

\begin{nota}Let $F$ be a dg-Segal space. We denote the subset of  $F(\D^1)$ whose 0- simplexes are homotopy equivalences by $F_{hoequiv}$.
\end{nota}

\begin{defin}\label{Def: complete dg-Segal space}Let $F$ be an object in $\Fun^\S(\FreeS\op,\sS)$. We say that $F$ is \textbf{a complete dg-Segal space} if it is a dg-Segal space and the morphism
$$F(k)\to F_{hoequiv} $$
is a weak equivalence. 
\end{defin}

\begin{defin}We denote the full subcategory of $\dgS$ of complete dg-Segal spaces by $\dgSc$.
\end{defin}

As we did with the dg-Segal spaces, we will prove now that every element of $Sing$ is actually a \textit{complete} dg-Segal space.

\begin{prop}\label{complete dg-Segal in Sing}Let $T\in\dg$ be a dg-category. Then $\Sing(T)$ is a complete dg-Segal space.
\end{prop}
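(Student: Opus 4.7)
Proposition \ref{Prop: Sing is dg-Segal} already shows that $\Sing(T)$ satisfies the three dg-Segal conditions, so the only remaining task is to verify the completeness condition, namely that the map
$$\Sing(T)(k) \longrightarrow \Sing(T)_{hoequiv}$$
induced by sending an object to its identity morphism is a weak equivalence of simplicial sets. The plan is to apply the delinearisation functor $j^*$ and reduce the statement to Rezk's classical completeness theorem for the Segal space $j^*(\Sing(T))$.

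First I would unfold the definitions on the target. By construction, $\Sing(T)(k) = \Map(k,T)$ and $\Sing(T)(\D^1) = \Map(\D^1,T) = j^*(\Sing(T))_1$, and similarly $\Sing(T)(k) = j^*(\Sing(T))_0$. The subspace $\Sing(T)_{hoequiv}\subset\Sing(T)(\D^1)$ is, by the notation introduced just before Definition \ref{Def: complete dg-Segal space}, exactly the union of components whose $0$-simplices are homotopy equivalences of $T$; but under the identification above this is the same as $j^*(\Sing(T))_{hoequiv}$ in the sense of Rezk. Hence the map to be shown to be a weak equivalence is literally the classical completeness map
$$j^*(\Sing(T))_0 \longrightarrow j^*(\Sing(T))_{hoequiv}.$$

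Next I would identify $j^*(\Sing(T))$ with a classical classifying diagram. Since $\D^n = j([n])$ is the free dg-category on the linear graph $0\to 1\to\cdots\to n$ with $k$ on every edge, the space $\Map(\D^n,T)$ is the derived space of $n$ composable morphisms in $T$. Viewing $T$ as a fibrant simplicial category $T^{\Delta}$ via the Dold--Kan truncation of its non-negative mapping complexes (so that $\Map(\D^n,T)$ recovers the mapping spaces of $T^{\Delta}$), the simplicial space $j^*(\Sing(T))$ is precisely the Rezk classifying diagram $N(T^{\Delta})$, and the notion of homotopy equivalence in $T$ coincides with the notion of homotopy equivalence of $0$-simplices of $N(T^{\Delta})_1$ used by Rezk, since both are detected by invertibility in $H^0$ (equivalently, $\pi_0$) of the appropriate mapping complex.

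With this identification, Rezk's classical theorem (\cite{ComSegalSpacesREZK}, Section~6) states that the classifying diagram of a fibrant simplicially enriched category is a complete Segal space, so in particular the map $N(T^{\Delta})_0 \to N(T^{\Delta})_{hoequiv}$ is a weak equivalence; transporting back through the identification above gives the desired completeness of $\Sing(T)$. The only genuinely delicate step is the second one: we must check that $j^*(\Sing(T))$ really is (weakly equivalent to) the Rezk classifying diagram, which amounts to ensuring that the derived mapping spaces $\Map(\D^n,T)$ decompose correctly as iterated homotopy fibre products of $\Map(\D^1,T)$ over $\Map(k,T)$ and that this decomposition matches Rezk's Segal decomposition. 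This compatibility is essentially the content of Proposition \ref{Prop: dg-Segal are Segal} applied to the dg-Segal space $\Sing(T)$, so no new combinatorial work is required beyond carefully tracking the definitions.
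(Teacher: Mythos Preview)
Your approach differs from the paper's and has a real gap. The paper does not go through the delinearisation functor or Rezk's classifying diagram at all; instead it cites a computation of To\"en (Corollary 8.7 in \cite{Toen}) which directly shows that for every dg-category $T$ the map $\Map(k,T)\to\Map(\D^1,T)$ is injective on $\pi_0$, bijective on all higher $\pi_i$, and has image exactly the isomorphisms of $[T]$. From this it follows at once that $\Sing(T)(k)\to\Sing(T)_{hoequiv}$ is a weak equivalence. No identification with a Rezk nerve is needed.

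The gap in your argument is the second step, and it is not closed by the reference you give. You claim that $j^*(\Sing(T))$ is the Rezk classifying diagram $N(T^\Delta)$ of the simplicial category obtained from $T$ by Dold--Kan truncating the mapping complexes, and that this ``follows essentially from Proposition \ref{Prop: dg-Segal are Segal}''. But that proposition only says that $j^*(\Sing(T))$ is \emph{some} Segal space; it says nothing about identifying the derived mapping spaces $\Map(\D^n,T)$ in $\dg$ with the levels of Rezk's classifying diagram of $T^\Delta$. That identification is precisely the hard part: computing $\Map_{\dg}(\D^n,T)$ is nontrivial (it involves the homotopy theory of dg-functors, not just of the underlying complexes), and showing it agrees levelwise with $N(T^\Delta)$ is essentially the content of To\"en's computation that the paper cites. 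So your route, once made rigorous, would pass through the same result the paper invokes directly; it is not an independent argument, and as written the key step is unjustified.
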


\begin{proof}
This is a direct consequence of the proof of Corollary 8.7 in \cite{Toen}. Indeed, during that proof Toën proves that for all $T\in\dg$, the morphism $\Map(k,T)=\Sing(T)(k)\to \Map(\D^1,T)=\Sing(T)(\D^1)$ induces an injection on $\pi_0$ and a bijection on $\pi_i$ for all $i>0$, and that its image in the homotopy category are the morphisms of $\l T\r$ that are isomorphisms. That means that the morphism is fully faithful, and that its essential image is $\Sing(T)_{hoequiv}$.

So for all $T$, the morphism $\Sing(T)(k)\to \Sing(T)_{hoequiv}$ is a weak equivalence, and $\Sing(T)$ is a complete dg-Segal space.
\end{proof}

Now, we have a nice definition of our $C$-local objects, but we still have the $C$-local equivalences defined in an abstract manner, and we said before that we were going to define something called the DK-equivalences that we needed to make into equivalences. Before we do that, though, we need one additional definition.

\begin{nota}We denote the full subcategory of cofibrant complexes of modules by $\Ch^c\subset \Ch$. We denote the full subcategory of perfect complexes by $\Ch^{perf}\subset \Ch$.
\end{nota}

\begin{rem}We remind our reader that the category of cochain complexes is accessible, and the perfect complexes are its compact objects. In consequence, for all $C\in\Ch$, we can write a filtered colimit $C=\colim E_i$, where $E_i\in\Ch^{perf}$ is a perfect complex for all $i$. 
\end{rem}

\begin{defin}Let $F\in\Fun^\S(\FreeS\op, \sS)$, let $x,y~\in\pi_0(F(k))$. Let $E\in\Ch^{perf}$ be a perfect complex. We define $F_{(x,y)}(E)$ to be the homotopy fiber of $F(E_{x,y})\to F(k)\times F(k)$ where $E_{x,y}$ is the free dg-category given by the graph with two objects, $x$ and $y$, and $E$ as the complex between $x$ and $y$,  
\begin{center}
	\begin{tikzcd}
		F_{(x,y)}(E)\ar[r]\ar[d]& *\ar[d]\\
		F(E_{x,y})\ar[r]& F(k)\times F(k).
	\end{tikzcd}
\end{center}
We define $F_{(x,y)}\in\Fun(\Ch^{c,op},\sS)$ to be, for all $C\in\Ch^c$ cofibrant complexes of modules, $$F_{(x,y)}(C)=\colim F_{(x,y)}(E_i),$$ where $C=\colim E_i$. 
We call $F_{(x,y)}$ \textbf{the dg-mapping space of $F$ at $x,y$}.
\end{defin}

These functors are always representable. In order to prove this, we will need a lemma.

\begin{lema}Let $F\in\dgS$ be a functor that satisfies the dg-Segal conditions, let $x,y\in\pi_0(F(k))$. Then the functor $F_{(x,y)}$ sends all homotopy colimits to homotopy limits.
\end{lema}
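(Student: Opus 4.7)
The plan is to reduce the general claim to the three conditions defining a dg-Segal space via a cellular decomposition of cofibrant complexes. Recall that the model structure on $\Ch$ is cofibrantly generated by $\{0 \to k^c[n]\}_n$ and $\{k[n] \to k^c[n]\}_n$, so every cofibrant complex is, up to a filtered (homotopy) colimit, an iterated pushout along these generators starting from $0$. Dually, every homotopy colimit in $\Ch^c$ can be rewritten as a filtered homotopy colimit of finite iterated pushouts. It therefore suffices to prove that $F_{(x,y)}$ sends (a) the initial object $0$ to a point, (b) a pushout of the form $E = C \cup_{k[n]} k^c[n]$ to a homotopy pullback, and (c) filtered colimits of perfect complexes to filtered homotopy limits in $\sS$.

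For (a), the graph $0_{x,y}$ consists of two isolated vertices with zero morphism complex, so $L(0_{x,y}) = k \coprod k$. Using dg-Segal conditions (1) and (2), $F(L(0_{x,y})) \simeq F(k) \times F(k)$, and the structure map to $F(k)^2$ is the identity; hence $F_{(x,y)}(0) \simeq *$. For (b), any such pushout arises by killing a cycle $\alpha \in Z^n(C)$, and corresponds at the graph level exactly to the construction $C_{x,y} \to C_{x,y}[\cancel{\alpha}]$ of Definition~\ref{Def: G alpha}. Applying the free dg-category functor $L$ and then $F$, dg-Segal condition (3) yields the homotopy pullback
$$F(L(E_{x,y})) \simeq F(L(C_{x,y})) \times^h_{F(\D(1,n,1))} F(\D^c(1,n,1)).$$
Since the construction $C_{x,y} \rightsquigarrow C_{x,y}[\cancel{\alpha}]$ preserves the object set $\{x,y\}$, all four spaces map compatibly to $F(k)^2$; by right properness of $\sS$, taking homotopy fibers at $(x,y)$ commutes with this pullback, giving
$$F_{(x,y)}(E) \simeq F_{(x,y)}(C) \times^h_{F_{(x,y)}(k[n])} F_{(x,y)}(k^c[n]),$$
which is the desired pullback square.

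Step (c) is essentially built into the definition of $F_{(x,y)}$ on all of $\Ch^c$ as a filtered colimit of its values on perfect complexes, combined with the fact that filtered colimits in $\sS$ commute with homotopy fibers. Combining (a), (b) and (c), any homotopy colimit in $\Ch^c$ is converted by $F_{(x,y)}$ to the corresponding homotopy limit in $\sS$. The principal obstacle will be the verification in step (b) that taking homotopy fibers over $F(k)^2$ truly commutes with the homotopy pullback of dg-Segal condition (3); this is clean conceptually but requires a careful tracking of the structure maps from each corner, and is the only non-formal step of the argument.
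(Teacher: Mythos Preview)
Your proposal is correct and follows essentially the same strategy as the paper: reduce to (i) the initial object, (ii) pushouts along the generating cofibrations $k[n]\to k^c[n]$, and (iii) filtered colimits, using the dg-Segal conditions for (i) and (ii) and the very definition of $F_{(x,y)}$ on non-perfect complexes for (iii). The paper organizes the reduction slightly differently (filtered colimits, then arbitrary pushouts reduced to generating ones, then finite coproducts reduced to the empty one), but the content is the same; your explicit verification in step~(b) that taking homotopy fibers over $F(k)^2$ commutes with the pullback from condition~(3) is in fact more detailed than what the paper writes, which simply asserts the compatibility.
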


\begin{proof}
In order to make the writing of this proof easier, we will ignore the fact that $F$ is contravariant and call this "commuting with homotopy limits".

Now, proving that a functor commutes with homotopy limits can be done by proving that it commutes with all filtered homotopy limits, all homotopy pushouts, and all finite sums. As $F$ satisfies the dg-Segal conditions, it already commutes with all filtered limits along perfect objects, which means that $F_{(x,y)}$ commutes with all filtered limits; and by the third dg-Segal condition it already commutes with homotopy pushouts along generating cofibrations. As we can write every homotopy pushout as a filtered homotopy limit over a homotopy pushout along a generating cofibration (in this case, the projective model structure), that means that $F_{(x,y)}$ commutes with all homotopy pushouts. Finally, all finite sums are homotopy pushouts except for the null sum, which means that is all we have left to prove.

Let $G$ be a graph with two objects and the null complex between them. By definition, the free dg-category associated to $G$ is the coproduct $L(G)=k\coprod k$. By the first dg-Segal condition, that means that $F(L(G))=F(k\coprod k)\simeq F(k)\times F(k)$. If we apply now the definition of $F_{(x,y)}$, we have that $F_{(x,y)}(G)$ is the homotopy fiber of $F(L(G))=F(k)\times F(k)\to F(k)\times F(k)$. As this is an isomorphism, we have that $F_{(x,y)}(G)=*$, the null object in $\sS$, and we are done. 

We have proven, then, that $F_{(x,y)}$ sends all homotopy colimits to homotopy limits.
\end{proof}

\begin{prop}Let $F\in\dgS$ be a functor that satisfies the dg-Segal conditions, let $x,y\in\pi_0(F(k))$. There exists a unique complex of modules up to weak equivalence $F(x,y)\in\Ch$ such that $F_{(x,y)}(-)\simeq \Map(-,F(x,y))$.
\end{prop}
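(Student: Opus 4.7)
The plan is to obtain $F(x,y)$ by a representability argument, using the preceding lemma to supply the needed exactness hypothesis. The statement amounts to an infinitary Yoneda lemma in the derived $\infty$-category of $k$-modules.

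First I would view $F_{(x,y)}$ as a presheaf of spaces on the derived $\infty$-category $D(k)$ underlying $\Ch$. The subcategory $\Ch^{perf}\subset\Ch^c$ is exactly the collection of compact objects in $D(k)$, and the definition of $F_{(x,y)}$ on a general cofibrant complex $C=\colim E_i$ is precisely a left Kan extension from $\Ch^{perf}$ along filtered colimits. Hence $F_{(x,y)}$ descends to a well-defined functor $F_{(x,y)}\colon D(k)\op\to\sS$. The preceding lemma asserts that this functor sends all homotopy colimits to homotopy limits: coproducts come from the first dg-Segal condition, homotopy pushouts along the generating cofibrations $\D(1,n,1)\to\D^c(1,n,1)$ from the third, and filtered colimits from the very construction. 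Since these three classes generate all colimits in $D(k)$, the functor is continuous.

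Since $D(k)$ is a compactly generated presentable stable $\infty$-category, the adjoint functor theorem — equivalently, Brown representability in this setting — produces an object $F(x,y)\in D(k)$ together with a natural weak equivalence $F_{(x,y)}(C)\simeq\Map(C,F(x,y))$ for every $C\in\Ch^c$. Uniqueness of $F(x,y)$ up to weak equivalence is then immediate from the Yoneda lemma inside $D(k)$: two objects representing the same presheaf of spaces must be canonically weakly equivalent.

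The main technical point I would expect to grind through is matching the ad hoc filtered-colimit definition of $F_{(x,y)}$ on $\Ch^c$ with the continuous extension supplied by the representability theorem; this hinges on the compactness of perfect complexes together with the fact that every cofibrant complex can be written as a filtered homotopy colimit of perfect complexes, so that the lemma's conclusion propagates unambiguously from $\Ch^{perf}$ to all of $\Ch^c$. Aside from this bookkeeping, everything is packaged by standard presentable $\infty$-category machinery once the preceding lemma is in hand.
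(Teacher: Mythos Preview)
Your proposal is correct and follows essentially the same approach as the paper: both prove uniqueness by Yoneda and existence by invoking a general representability criterion for limit-preserving presheaves on a presentable $\infty$-category, with the limit-preservation supplied by the preceding lemma. The only cosmetic difference is that the paper cites a specific reference (\cite[Proposition~1.9]{TV-Chern}) for the representability statement, whereas you phrase it as Brown representability / the adjoint functor theorem in $D(k)$; these are the same machinery.
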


\begin{proof}
The uniqueness is just a consequence of the Yoneda lemma. We assume the existence of two complexes of modules, $F_1(x,y)$ and $F_2(x,y)$, such that $F_{(x,y)}(-)\simeq \Map(-,F_1(x,y))$ and also $F_{(x,y)}(-)\simeq \Map(-,F_2(x,y))$. But the Yoneda lemma tells us that a weak equivalence of representable presheaves must come from a weak equivalence on the representing objects. Which means that we have
$$F_1(x,y)\simeq F_2(x,y). $$
We have the uniqueness.

Let us see the existence next. This is a direct consequence of Proposition 1.9 in \cite{TV-Chern}, in particular of a certain point of the proof of part 1. Indeed, we prove there that if $A$ is an $\infty$-category, then all functor $G\in\Fun(A\op,\sS)$ that commutes with homotopy colimits is representable. That result has, then, reduced our problem to proving that the functor $F_{(x,y)}$ sends all homotopy colimits to homotopy limits (we remind the reader that $F_{(x,y)}$ is a contravariant functor). Now, by the previous lemma, we know that is the case. We have that $F_{(x,y)}$ is representable, i.e. there exists a complex of modules $F(x,y)$ such that $F_{(x,y)}(-)\simeq \Map(-, F(x,y))$.

And we have finished this proof.
\end{proof}

\begin{rem}By the Yoneda lemma again, it is obvious that if the functor $F$ is of the form $\Sing(T)$ with $T\in\dg$, then we have that for all $E\in\Ch^c$,
$$F_{(x,y)}(E)=\Map(E,T(x,y)). $$
\end{rem}

\begin{defin}Let $F\in\dgS$ be a dg-Segal space. We call the \textbf{homotopy category of $F$}, and we denote it by $\l F\r$, the category whose objects are the 0-simplexes of $F(k)$ and whose morphisms are given for all $x,y\in\pi_0(F(k))$ by 
$$\l F \r(x,y)=H^0(F(x,y)) $$
i.e. the cohomology groups of degree 0 of the complex of morphisms associated to the dg-mapping space at $x$ and $y$.
\end{defin}

\begin{rem}We point the reader to the fact that we have that if $F\in\dgS$, then
$$H^0(F(x,y))=\pi_0(\map_{j^*(F)}(x,y)), $$
where $\map_{j^*(F)}$ is the mapping space in the associated Segal space, so we can use the composition law there.
\end{rem}

We are now ready to define DK-equivalences.

\begin{defin}Let $f:F\to G\in\dgS$  be a morphism between two functors satisfying the dg-Segal conditions. We say that $f$ is \textbf{a DK-equivalence} if it satisfies the following conditions:
\begin{enumerate}
	\item The induced morphism $\l f\r:\l F\r\to \l G\r$ is essentially surjective.
	\item For all objects $x,y\in\pi_0(F(k))$, the induced morphism on the dg-mapping spaces, $F_{(x,y)}\to G_{(f(x),f(y))}$, is a quasi-equivalence of functors in $\Fun(\Ch^{c,op},\sS)$.
\end{enumerate}
\end{defin}

\begin{rem}\label{Rem: pseudo 2oo3}Some readers might ask themselves why we have defined DK-equivalences exclusively on dg-Segal spaces, and not on any functor. The reason is that if we do that, DK-equivalences would not satisfy the two-out-of-three condition. Indeed, if we have three morphisms $f$, $g$ and $f\circ g$, and two out of the three are DK-equivalences over any functor, then the third has the second condition of DK-equivalences because of the two-out-of-three condition on quasi-equivalences; but the first condition is not always true. It is only true if we have $f,g$ DK-equivalences or $g$ and $f\circ g$ DK-equivalences. It is true if the spaces are all dg-Segal, though.
\end{rem}

\begin{rem}If $F, G$ are of the form $F=\Sing(T), G=\Sing(T')$, what we just defined is exactly a weak equivalence between dg-categories.
\end{rem}

\begin{hyp}\label{Hyp: Re-zk}Let $f:F\to G$ be a morphism between two functors satisfying the dg-Segal conditions. Then $f$ is a DK-equivalence if and only if it is a weak equivalence in the complete dg-Segal model structure.  
\end{hyp}

We have now everything we need in order to prove that the functor is an equivalence.

\section{Hypercovers and free dg-categories}\label{hypercovers}

We will now take a slight detour from dg-Segal spaces to delve into hypercovers. Hypercovers are a useful concept, and it has been defined in several different contexts: simplicial sets and topological spaces, among others. Through the different definitions, some traits remain, though: in all of them, a hypercover of an object $A$ is an augmented object $U_*\to A$ such that a certain set of morphisms satisfies an epimorphism condition; and in most of them we have the property that $\hocolim U_n\simeq A$. Let us give the example of simplicial sets here, as it will be used substantially later.

\begin{defin}\label{Def: defin hypercovers sSet}\cite[Definition 6.5.3.2, Corollary 7.2.1.15]{HTT} Let $X\in\sS$ be a simplicial set, and $U_*\to X$ an augmented simplicial object in $\sS$. We say that $U_*$ is  \textbf{a hypercover of $X$} if for all $n\in\N$ the functor 
$$U_n\to U_*^{\partial\Delta^n}$$
is an effective epimorphism. In other words, $U_*\to X$ is a hypercover if for all $n\in\N$
$$\pi_0(U_n)\to \pi_0(U_*^{\partial\Delta^n})$$
is an epimorphism.
\end{defin}

\begin{prop}\label{Prop: hypercovers sSet} \cite[Theorem 6.5.3.12]{HTT} Let $X\in\sS$ be a simplicial set and $U_*\to X$ a hypercover of simplicial sets. Then we have that $\hocolim U_n\simeq X$.
\end{prop}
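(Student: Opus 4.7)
The plan is to prove the proposition by a connectivity induction along the skeletal filtration of $U_*$, exploiting the fact that $\sS$ is a hypercomplete $\infty$-topos. First, I would identify $\hocolim U_n$ with the geometric realization $|U_*|$ (this is the usual presentation of the homotopy colimit of a simplicial object by the Bousfield--Kan / coend formula), and decompose it as the sequential colimit $|U_*| = \colim_n |\sk_n U_*|$. Write $X_n := |\sk_n U_*|$ with its canonical augmentation $p_n : X_n \to X$. The target is to show inductively that the homotopy fiber of $p_n$ is $n$-connective; then, passing to $n \to \infty$, the fiber of $|U_*| \to X$ is $\infty$-connective, and hypercompleteness of $\sS$ forces it to be contractible.

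For the base case, the hypercover condition at $n = 0$ (with $\partial\Delta^0 = \emptyset$, interpreting the matching against the augmentation) gives that $U_0 \to X$ is an effective epimorphism, hence $\pi_0$-surjective, so that $p_0$ has $(-1)$-connective fiber. For the inductive step, the Reedy pushout presentation writes $X_n$ as the homotopy cofiber of a map involving the latching object $L_n U_*$ and the cells indexed by the non-degenerate part $U_n / L_n U_*$; the matching dual of this is precisely the comparison $U_n \to U_*^{\partial\Delta^n}$ appearing in the hypercover definition. The effective epimorphism condition at level $n$ then provides enough $n$-simplices in $U_n$ to kill every potential class in $\pi_n$ of the fiber of $p_{n-1}$, thereby producing the required $n$-connectivity for the fiber of $p_n$.

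The principal obstacle is the identification of the matching object $U_*^{\partial\Delta^n}$ with a homotopy limit that controls the connectivity of the fiber of $p_{n-1}$ at each inductive step. Concretely, one needs to recognize $U_*^{\partial\Delta^n}$ as $\holim_{\Delta^m \to \partial\Delta^n} U_m$ (a matching object in the Reedy sense) and to check that the natural square comparing this matching object with the fiber of $p_{n-1}$ over a chosen basepoint extracts exactly the $\pi_n$-obstruction one wishes to annihilate. This Reedy-theoretic bookkeeping, together with the fact that effective epimorphisms are stable under composition and sequential colimits in the $\infty$-topos $\sS$, are the nontrivial ingredients; they are the heart of Lurie's argument in \cite[Theorem 6.5.3.12]{HTT}, and I do not see a substantially shorter route that avoids invoking hypercompleteness of $\sS$.
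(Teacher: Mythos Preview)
The paper does not give a proof of this proposition at all: it is stated with the citation \cite[Theorem 6.5.3.12]{HTT} and then used as a black box throughout Section~\ref{hypercovers} and the subsequent arguments. So there is no ``paper's own proof'' to compare your proposal against.

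Your sketch is a reasonable outline of the standard argument (and is close in spirit to what Lurie actually does in the cited reference), but for the purposes of this paper no proof is expected here; the author is simply importing the result.
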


\begin{rem}Any reader familiar with Lurie's work will know that the definition there is given in the much greater generality of $\infty$-topoi. We have decided to translate it here to the language of simplicial sets because it is easier to grasp and it is the only context in which we will use it.
\end{rem}

Unluckily for us, none of the known definitions work for our situation, which means we will be forced to write our own. In this section, then, we will give a new definition of hypercovers that depend on a certain subcategory $M_0$. The next step will be proving that for all elements of a category we can construct a hypercover made entirely of objects in $M_0$. Of course, this cannot work for a general $M_0$: we will need a few extra conditions, which we will give before we give a construction of such a hypercover. 

As the reader can imagine, we decided to use the free dg-categories of finite type expecting them to be well-behaved enough that we can work with $\Free$-hypercovers; and after fixing the objects in order to keep it a reasonable size, we will prove that it can be done. Lastly, we will tackle the other condition we said nicely-behaved hypercovers have: the fact that the homotopy colimit gives us the original object. For that, we will first fix two objects and prove that we have such a condition on the morphism complexes, and then using the fact that we have fixed objects in the construction, expand it to hypercovers of dg-categories.

Let us start with the definition of $M_0$-hypercovers over a model category.

\begin{defin}Let $M$ be a model category and $M_0$ be a subcategory of $M$. Then we define $f:T\to T'$ a morphism in $M$ to be \textbf{an $M_0$-epimorphism} if for all $X\in M_0$ the induced functor 
$$\Map(X,T)\to \Map(X,T') $$
is an effective epimorphism in $\sS$, i.e. the morphism
$$\pi_0(\Map(X,T))\to \pi_0(\Map(X,T')) $$
is surjective.
\end{defin}

\begin{defin}Let $M$ be a model category and $M_0$ be a subcategory of $M$. Let $X\in M$ be an object of $M$, and $U_*\to X$ an augmented simplicial object in $M$. We say that $U_*$ is \textbf{an $M_0$-hypercover of $X$} if for all $n\in\N$ the functor 
$$U_n\to U_*^{\partial\Delta^n}$$
is an $M_0$-epimorphism. In other words, $U_*\to X$ is an $M_0$-hypercover if 
$$U_0\to X $$
is an $M_0$-epimorphism and for all $n\geq 1$
$$U_n\to (\R\cosk_{n-1}\sk_{n-1}U)_{n} $$
is an $M_0$-epimorphism.
\end{defin}

\begin{rem}It is easy to see that this definition recovers the classic definition of a hypercover in $\sS$: indeed, the definition we gave in Definition \ref{Def: defin hypercovers sSet} is just a $*$-hypercover, where $*$ is the initial object in $\sS$.
\end{rem}

 As usual in these cases, we will construct our hypercover level by level, so we need a definition of a $n$-truncated hypercover.

\begin{defin}Let $M$ be a model category and $M_0$ be a subcategory of $M$. Let $X\in M$ be an object in $M$. We define \textbf{an $n$-truncated $M_0$-hypercover of $X$} to be an augmented $n$-truncated simplicial set $X_*\to X$ where for all $i\leq n$
$$X_i\to X_*^{\partial\Delta^i}  $$
is an $M_0$-epimorphism.
\end{defin}

\begin{teo}\label{Th: existence hypercovers}Let $M$ be a model category where every object is fibrant, and let $M_0$ be a subcategory of $M$ which is closed for finite coproducts. We assume that for every $X\in M$ there exists an object in $M_0$, $U\in M_0$, and a morphism $U\to X$ that is an $M_0$-epimorphism. Then there exists an $M_0$-hypercover of $X$ consisting of objects in $M_0$.
\end{teo}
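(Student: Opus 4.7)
The strategy is an inductive construction of $U_*\to X$, extending an $n$-truncated $M_0$-hypercover to an $(n+1)$-truncated one at each stage. The hypothesis will be applied at each step to the matching object of the partial construction, and the closure of $M_0$ under finite coproducts will be used to adjoin degenerate simplices so that every $U_n$ remains in $M_0$.

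For the base case, apply the hypothesis directly to $X$ to obtain $U_0\in M_0$ together with an $M_0$-epimorphism $U_0\to X$. For the inductive step, assume an $n$-truncated $M_0$-hypercover $U_{\leq n}\to X$ has been built with each $U_i\in M_0$. Form the matching object
\[
    M_{n+1}\;:=\;U_*^{\partial\Delta^{n+1}}
\]
in $M$; since every object of $M$ is fibrant, this strict limit also computes the homotopy matching object. Apply the hypothesis to $M_{n+1}$ to obtain $W\in M_0$ equipped with an $M_0$-epimorphism $W\to M_{n+1}$. To incorporate the required degeneracy data, set
\[
    L_{n+1}\;:=\;\coprod_{\tau\colon [n+1]\twoheadrightarrow [k],\,k\leq n}U_k,
\]
a finite coproduct indexed by the finitely many nonidentity surjections out of $[n+1]$ in $\Delta$; by closure under finite coproducts, $L_{n+1}\in M_0$, and thus
\[
    U_{n+1}\;:=\;W\;\sqcup\;L_{n+1}\;\in\;M_0.
\]
The degeneracy $s_j\colon U_n\to U_{n+1}$ is the canonical inclusion of $U_n$ into the summand of $L_{n+1}$ indexed by $s_j\colon [n+1]\twoheadrightarrow [n]$, while the face $d_i\colon U_{n+1}\to U_n$ is the composite $U_{n+1}\to M_{n+1}\xrightarrow{\mathrm{pr}_i} U_n$, using our chosen $W\to M_{n+1}$ on the $W$-summand and the canonical maps on each summand of $L_{n+1}$. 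Because $W\to M_{n+1}$ is already an $M_0$-epimorphism and factors through $U_{n+1}\to M_{n+1}$, the latter is an $M_0$-epimorphism as well, which is the required hypercover condition at level $n+1$.

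The main technical point to check is the simplicial identities. Those among pure degeneracies follow from the indexing by surjections and the composition law in $\Delta$; those among pure faces are automatic from the universal property of $M_{n+1}$. The delicate case is the mixed identities $d_i s_j\in\{\mathrm{id},\,s_{j-1}d_i,\,s_j d_{i-1}\}$: on each summand of $L_{n+1}$ indexed by a surjection $\tau$, one applies the epi-mono factorization in $\Delta$ to $d_i\circ\tau$, and the resulting factorization matches the face and degeneracy structure on $U_{\leq n}$ supplied by the inductive hypothesis. Once these combinatorial checks are completed, iterating the construction across all levels yields the sought-after $M_0$-hypercover $U_*\to X$ with every $U_n\in M_0$.
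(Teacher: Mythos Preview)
Your proposal is correct and follows essentially the same strategy as the paper: an induction on truncation level in which one applies the hypothesis to the matching object $(\cosk_n\sk_n U_*)_{n+1}$, then adjoins a finite coproduct of lower $U_i$'s to carry the degeneracies, and finally checks the $M_0$-epimorphism condition via the factorization through the chosen epimorphism. Your treatment is in fact slightly more careful than the paper's on two points: you index the degeneracy coproduct by the surjections $[n+1]\twoheadrightarrow[k]$ rather than merely by $i\le n$, and you flag the mixed simplicial identities $d_i s_j$ as the nontrivial combinatorial check, which the paper handles more informally.
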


\begin{proof}
We are going to prove this by induction. Let $X\in M$ be an object in $M$. We will prove that for all $n\in \N$ there exists an $n$-truncated hypercover of $X$.

$\bullet\ \ n=0$. This is true by hypothesis. We have assumed that there exists an object $U_0\in M_0$ with a morphism $U_0\to X$ which is $M_0$-epi. We then take the $0$-truncated simplicial set which is $U_0$ on degree $0$. It is a $0$-truncated hypercover by definition.

 $\bullet\ \ n\in\N$. By induction hypothesis there exists an $n$-truncated hypercover of $X$ consisting of objects pf $M_0$, named $V_*\to X$. We have to construct an $(n+1)$-truncated hypercover of $X$, that we will call $U_*\to X$. 

We define $V_*'=\sk_{n+1}(\cosk_n V_*)$. This simplicial object is $(n+1)$-truncated, but the term $n+1$ is not necessarily in $M_0$. By hypothesis, there exists a morphism $U'_{n+1}\to V_{n+1}'$ that is a $M_0$-epi and such that $U'_ {n+1}\in M_0$. We define the $(n+1)$-truncated $M_0$-hypercover $U_*\to X$ to be $U_i=V_i'=V_i$ for all $i\leq n$ and 
$$U_{n+1}=U'_{n+1}\coprod_{i\leq n} V_i$$
for $n+1$, where we take the colimit over $i\leq n$. As the set of elements $i\leq n$ is finite and we have assumed that $M_0$ is closed for finite colimits, we still have that $U_{n+1}\in M_0$.

We need to prove that $U_*\to X$ is an augmented simplicial set, and also that we have the $M_0$-hypercover property. Let us construct the morphisms $U_m\to U_{n+1}$ and $U_{n+1}\to U_m$ for all morphism $[n+1]\to [m]$ and $[m]\to [n+1]$, with $m\leq n+1$.

These morphisms are straightforward: in one direction we take the composite
$$U_{n+1}\to U'_{n+1}\to V'_{n+1}\to V'_{m}=U_m. $$
and in the other sense the morphism given by the colimit.

And lastly, we need to prove that for all $i\leq n+1$ the morphism $U_i\to U_*^{\partial\Delta^i} $ is a $M_0$-epimorphism.

For $i\leq n$ this is true by induction hypothesis. Indeed, we have defined $U_*$ in such a way that $U_i=V'_i$, and we have that $V'_i=(\sk_{n+1}(\cosk_n V_*))_i=V_i$ for all $i\leq n$. We need then to prove that 
$$U_i=V'_i=V_i\to (\cosk_{i-1}\sk_{i-1}U_*)_i=(\cosk_{i-1}\sk_{i-1}V_*)_i $$
 is an $M_0$-epimorphism. Since $V_*$ is an $n$-truncated $M_0$-hypercover, this condition is verified. We only need to prove this for $n+1$.

 We need to prove, then, that the morphism
$$U_ {n+1}=U'_{n+1}\coprod V_i\to (U_*)^{\partial\Delta^n}=(\cosk_n\sk_n U_*)_{n+1}=(\cosk_n V_*)_{n+1}=V'_{n+1} $$
is an $M_0$-epimorphism. Let $W\in M_0$ be an object in $M_0$, we are going to prove that 
$$\Map(W,U'_{n+1}\coprod V_i)\to \Map(W,V'_{n+1})$$
is surjective on the $\pi_0$. Every object in $M$ is fibrant by hypothesis, which means that we can write $\Map(W,U'_{n+1}\coprod V_i)$ as $\Hom(C^*(W),U'_{n+1}\coprod V_i)$, and similarly for $\Map(W,V'_{n+1})$. We are, then, going to prove that
$$\pi_0(\Map(W,U'_{n+1}\coprod V_i))=\Hom(W,U'_{n+1}\coprod V_i)/\sim\to \pi_0(\Map(W,V'_{n+1}))=\Hom(W, V'_{n+1})/\sim $$
is surjective, where $\sim$ is the homotopy equivalence relation.

We will need an auxiliary morphism for this. By definition of a simplicial set, for all $i\leq n$ there exists a morphism $V_i=V'_i\to V'_{n+1}$. Also, by the way we have constructed $U_*$, there exists a morphism $u:U'_{n+1}\to V'_{n+1}$, which is an $M_0$-epimorphism. In consequence, by definition of a coproduct, there exists a morphism $f:U'_{n+1}\coprod V_i\to V'_{n+1}$, and a factorization $g:U'_{n+1}\to U'_{n+1}\coprod V_i$ such that $f\circ g=u$.

And now let us prove the surjectivity. Let $F:W\to V'_{n+1}$ be a morphism. As $u:U'_{n+1}\to V'_{n+1}$ is an $M_0$-epimorphism, that means that it exists, up to homotopy, a morphism $F':W\to U'_{n+1}$ such that $u\circ F'=F$. But we have said that $u$ factorizes through $f$. We can then, compute a morphism 
$$g\circ F':W\to U'_{n+1}\to U'_{n+1}\coprod V_i $$
such that $f\circ g\circ F'=u\circ F'=F$ up to homotopy. We have that the morphism
$$\pi_0(\Map(W,U'_{n+1}\coprod V_i))=\Hom(W,U'_{n+1}\coprod V_i)/\sim\to \pi_0(\Map(W,V'_{n+1}))=\Hom(W, V'_{n+1})/\sim $$
is an $M_0$-epimorphism. The $(n+1)$-truncated simplicial set $U_*\to X$ is an $(n+1)$-truncated $M_0$-hypercover of $X$ and we have finished the proof.
\end{proof}

Even though it would be possible to construct $\Free$-hypercovers directly, the number of objects in it would explode quite quickly, and we don't want that. So instead of working on the free dg-categories directly, we will first fix the objects.

\begin{prop}\label{Prop: free hyper} Let $X$ be a dg-category, and let $\O$ be its set of objects. Let $M$ be $\dg_\O$ the model category of dg-categories with $\O$ as a set of objects,  and $M_0=\Free_{\O}$ be the full subcategory of free dg-categories with $\O$ as a set of objects. There exists an $M_0$-hypercover $U_*\to X$ such that $U_i\in \Free_{\O}$ for all $i\in\N$.
\end{prop}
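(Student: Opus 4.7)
The plan is to apply Theorem \ref{Th: existence hypercovers} directly, taking $M = \dg_\O$ and $M_0 = \Free_\O$. So the work reduces to verifying the three hypotheses of that theorem: every object of $\dg_\O$ is fibrant, $\Free_\O$ is closed under finite coproducts, and every $X \in \dg_\O$ admits an $\Free_\O$-epimorphism from an object of $\Free_\O$.

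For fibrancy, Corollary \ref{ch. 1:hom-cofib} tells us that every object of $\dg$ is fibrant for Tabuada's model structure, and this passes to $\dg_\O$ since fibrations in $\dg_\O$ are inherited from $\dg$ (both fibration conditions involve only the morphism complexes, and the lifting of homotopy equivalences is trivial when objects are fixed). For closure under finite coproducts, if $L(G), L(G') \in \Free_\O$ then the coproduct in $\dg_\O$ is $L(G \sqcup_\O G')$, where $(G \sqcup_\O G')(x,y) = G(x,y) \oplus G'(x,y)$; since $L$ is a left adjoint it preserves coproducts, so this is again in $\Free_\O$.

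The heart of the argument is constructing, for each $X \in \dg_\O$, a free dg-category $U_0 \in \Free_\O$ together with an $\Free_\O$-epimorphism $U_0 \to X$. For this I use the adjunction $L : Gr(\Ch)_\O \rightleftharpoons \dg_\O : U$ restricted to the fixed object set $\O$, and set $U_0 := L(U(X))$ with its counit $\epsilon : L(U(X)) \to X$. To check the epimorphism condition, let $W = L(H) \in \Free_\O$ and let $f : W \to X$ be any dg-functor. By adjunction, $f$ corresponds to a graph morphism $\tilde{f} : H \to U(X)$; composing with the unit $\eta_{U(X)} : U(X) \to U(L(U(X)))$ and transposing back across the adjunction produces a dg-functor $\bar{f} : W \to L(U(X))$ with $\epsilon \circ \bar{f} = f$ on the nose, by the triangle identity. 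This gives an actual lift (much stronger than lifting up to homotopy), so the induced map $\pi_0(\Map(W, L(U(X)))) \to \pi_0(\Map(W, X))$ is surjective.

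The main technical obstacle I expect is bookkeeping around the fixed-objects category rather than any deep difficulty: one must check that $\dg_\O$ really inherits a model structure from $\dg$ (with weak equivalences the quasi-fully faithful morphisms fixing $\O$), that the free-forgetful adjunction $L \dashv U$ restricts properly to the $\O$-graded setting, and that the counit $\epsilon$ lands in $\dg_\O$ (which it does, since $L$ and $U$ both preserve object sets). Once these verifications are in place, the three hypotheses of Theorem \ref{Th: existence hypercovers} are satisfied and the theorem produces the desired $\Free_\O$-hypercover $U_* \to X$ with $U_i \in \Free_\O$ for all $i$.
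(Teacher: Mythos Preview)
Your proof is correct and follows the same approach as the paper: verify the three hypotheses of Theorem \ref{Th: existence hypercovers}, using the counit $\epsilon: LU(X)\to X$ for the $M_0$-epimorphism. Your use of the adjunction and triangle identity to produce the strict lift is in fact cleaner than the paper's explicit description, which writes the lift by hand via the $m=0$ summand of the free-category decomposition of $LU(X)$.
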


\begin{proof}
We just need to prove that the model category $M$ and the subcategory $M_0$ fulfill the conditions of Theorem \ref{Th: existence hypercovers}, i.e. that all object in $M$ is fibrant, that $M_0$ is closed for finite coproducts and that for all object $X\in M$ there exists an object $U\in M_0$ and a $M_0$-epimorphism $U\to X$. 

\begin{itemize}
	\item All dg-categories are fibrant, so this condition is fulfilled.
	\item Let $X,Y$ be two free dg-categories over $\O$. Then, by definition, there exists two graphs $X',Y'\in Gr(\Ch)$ such that $L(X')=X$ and $L(Y')=Y$. But $L$ is a left adjoint, and a finite coproduct is a special case of a colimit: as such, we know that $L$ commutes with finite coproducts. That gives us that $X\coprod Y=L(X')\coprod L(Y')=L(X'\coprod Y')$, and we have that $X\coprod Y$ is a free dg-category, coming from the coproduct of graphs $X'\coprod Y'$. We have that $\Free$ is closed for finite coproducts and we have finished.
	\item Let $X$ be a dg-category. We are going to prove that the morphism $LU(X)\to X$ is the $M_0$-epimorphism we need. First of all, by definition of a free dg-category, $LU(X)$ has the same objects as $X$ and it is an object in $\Free_{\O}$. Let $A$ be a free dg-category of finite type over $\O$. We need to prove, then, that 
	$$\Map(A,LU(X))\to \Map(A,X) $$
	is surjective on the $\pi_0$. But this morphism is already surjective. If we take $f:C^*(A)\to X$, we can define a morphism $f':C^*(A)\to LU(X)$ that gives us the right result. Indeed, $X$ and $LU(X)$ have the same objects, so that won't change. We only need to define the morphisms of complexes.
	
	Let $x,y\in C^*(A)$. Then we have a morphism $\phi:C^*(A)(x,y)\to X(f(x),f(y))$. Now, the morphisms in $L(U(X))$ are defined as follows:
$$L(U(X))(f(x),f(y))=\bigoplus_{m\in\N}\bigoplus_{x_1,\ldots, x_m\in\mathcal{O}}(U(X)(f(x),f(x_1))\otimes \cdots \otimes U(X)(f(x_m),f(y))). $$	
	 We define the morphism $\phi':C^*(A)(x,y)\to LU(X)(f(x),f(y)) $ such that for all $g\in C^*(A)(x,y)$, the image by $\phi'$ is the direct sum with $\phi(g)$ in the zero component and zero everywhere else.
\end{itemize} 
We have all three conditions for the existence of an $M_0$-hypercover. For all $X$ there exists a $\Free$-hypercover of $X$ composed of objects in $\Free$.
\end{proof}

There are a couple of things that we need to highlight from this construction. Let us give a definition first.

\begin{defin} We define a \textbf{split hypercover} of $E$ in $\Ch$ to be an augmented simplicial complex $E_*\to E$ such that for all $n\in\N$ the morphism $E_n\to E_*^{\partial\Delta^n}$ is a split epimorphism.
\end{defin}

\begin{rem} If we have a $\FreeS$-hypercover $U_* \to X$ defined like that then for all $x,y\in \Ob(X)$ and for all $n\in\N$ we have that $U_n(x,y)\to U_*(x,y)^{\partial\Delta^n}$ is a split epimorphism. But that is not all.
\end{rem}

\begin{coro}\label{Coro: split hyper} If we have an augmented simplicial object $T_*\to T$ in $\dg_O$ such that for all $x,y\in\O$ the associated augmented simplicial complex $T_*(x,y)\to T(x,y)$ is a split hypercover in $\Ch$, then $T_*\to T$ is a $\Free$-hypercover in $\dg_\O$.
\end{coro}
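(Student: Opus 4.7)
The plan is to verify the $\Free_\O$-epimorphism condition levelwise in $\dg_\O$. Fix $n\in\N$; I would show that $T_n\to T_*^{\partial\Delta^n}$ is a $\Free_\O$-epimorphism, i.e.\ that for every $A\in\Free_\O$ the induced map $\pi_0\Map(A,T_n)\to\pi_0\Map(A,T_*^{\partial\Delta^n})$ is surjective. The crucial preliminary observation is that since the object set is fixed, limits in $\dg_\O$ are computed pointwise on morphism complexes, so $(T_*^{\partial\Delta^n})(x,y)=T_*(x,y)^{\partial\Delta^n}$ for every $x,y\in\O$. The split hypercover hypothesis therefore provides, for each pair $(x,y)$, a chosen section $s_{x,y}:T_*(x,y)^{\partial\Delta^n}\to T_n(x,y)$ in $\Ch$ of the structural map.

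Next, I would perform the lift at the strict level using the adjunction $L\dashv U$. Writing $A=L(G)$ for some $G\in Gr(\Ch)_\O$, a strict dg-functor $L(G)\to T'$ in $\dg_\O$ amounts to a collection $\{\phi_{x,y}:G(x,y)\to T'(x,y)\}_{x,y\in\O}$ of complex morphisms, with no coherence with composition required, since freeness means there are no relations to respect. Given a strict morphism $f:L(G)\to T_*^{\partial\Delta^n}$ encoded by such $\phi_{x,y}$, the components $s_{x,y}\circ\phi_{x,y}$ assemble, by the adjunction, into a strict dg-functor $\tilde{f}:L(G)\to T_n$, which is an honest lift of $f$. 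Note that the sections $s_{x,y}$ are not required to assemble into a dg-functor themselves, which would be an unreasonable demand, precisely because we only apply them componentwise to the image of a free source.

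The last step is to promote strict lifting to the $\pi_0\Map$-surjection required by the definition. Every object of $\dg_\O$ is fibrant, since fibrations can be tested levelwise on morphism complexes and all cochain complexes are fibrant, so $\pi_0\Map(L(G),-)$ is computed by taking a cofibrant replacement of $L(G)$ and passing to homotopy classes. Any homotopy class in $\pi_0\Map(L(G),T_*^{\partial\Delta^n})$ then has a representative which the strict argument above lifts to a representative in $\pi_0\Map(L(G),T_n)$, following the same pattern of reasoning employed in the proof of Proposition \ref{Prop: free hyper}.

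The only genuinely delicate point, and where I expect the main obstacle to lie, is this final passage from strict to derived mapping space: one must be careful that the cofibrant replacement used to compute $\Map$ still admits the freeness lifting property invoked in the previous paragraph. All the structural content of the proof is otherwise a formal consequence of the adjunction $L\dashv U$ combined with the split-epimorphism hypothesis on $T_n(x,y)\to T_*(x,y)^{\partial\Delta^n}$ in $\Ch$.
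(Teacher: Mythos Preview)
The paper offers no proof for this corollary; it sits immediately after the converse remark and is evidently meant to follow by the same mechanism as Proposition~\ref{Prop: free hyper}. Your argument is that mechanism exactly: identify strict dg-functors out of $L(G)$ with families of chain maps via $L\dashv U$, observe that $(T_*^{\partial\Delta^n})(x,y)=T_*(x,y)^{\partial\Delta^n}$ since limits in $\dg_\O$ are computed on hom-complexes, and lift componentwise along the chosen sections. Your point that the $s_{x,y}$ need not glue to a dg-functor, and that freeness of the source is precisely what makes this irrelevant, is the heart of the matter.

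The subtlety you flag in passing from strict $\Hom$ to derived $\Map$ is genuine, and you are in fact more scrupulous here than the paper itself: the analogous step in the proof of Proposition~\ref{Prop: free hyper} defines a lift $C^*(A)\to LU(X)$ only on hom-complexes without checking compatibility with composition. One clean way to close the gap is to restrict to cofibrant $G$, for which $L(G)$ is already cofibrant (as $L$ is left Quillen) and the derived adjunction gives $\Map_{\dg_\O}(L(G),T')\simeq\prod_{x,y}\Map_{\Ch}(G(x,y),T'(x,y))$; the $\pi_0$-surjectivity is then immediate from the split hypothesis. Since every downstream use of the corollary in the paper (Lemma~\ref{Lem: hyper complexes}, Lemma~\ref{Lem: hocolim DK}, the proof of Theorem~\ref{ch. 2: essential surjectivity}) actually invokes the hom-complex split-hypercover property directly rather than the abstract $\Free$-epimorphism condition, the residual question for non-cofibrant test graphs is harmless for the paper's purposes.
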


 When we described hypercovers, we gave one important property that those constructions tended to have: their homotopy colimits being quasi-equivalent to the original object. In order for that to happen in this context, we will need an auxiliary result beforehand.

\begin{lema}\label{Lem: hyper complexes} Let $M=\dg$ and $M_0=\Free$. Let $T$ be an object in $\dg$ and $T_*\to T$ a $\Free$-hypercover constructed using Theorem \ref{Th: existence hypercovers} and Proposition \ref{Prop: free hyper}. Then for all $x,y\in \Ob(T)$ we have that $\hocolim_{\Delta\op}(T_i(x,y))\simeq T(x,y)$ in $\Ch$.
\end{lema}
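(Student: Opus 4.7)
The plan is to reduce the statement to a fact about split hypercovers in $\Ch$ and then invoke that fact. More precisely, I will first show that for every pair $x,y\in\Ob(T)$, the augmented simplicial complex $T_*(x,y)\to T(x,y)$ obtained by restriction is a \emph{split} hypercover in $\Ch$ in the sense defined just before Corollary \ref{Coro: split hyper}; then I will prove (or appeal to the classical fact) that any split augmented simplicial complex in $\Ch$ has its homotopy colimit over $\Delta\op$ canonically quasi-isomorphic to its augmentation.

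For the first step, I would trace through the inductive construction of $T_*$ provided by Theorem \ref{Th: existence hypercovers} applied via Proposition \ref{Prop: free hyper}. At each stage one sets $U_{n+1}=U'_{n+1}\coprod_{i\leq n}V_i$, where $U'_{n+1}\to V'_{n+1}=(\cosk_n V_*)_{n+1}$ is the $\Free_\O$-epimorphism obtained from the construction $U'_{n+1}=LU(V'_{n+1})$. Unwinding the formula for $L(U(Z))(x,y)=\bigoplus_{m,(x_i)}U(Z)(x,x_1)\otimes\cdots\otimes U(Z)(x_m,y)$, the $m=0$ summand is exactly $Z(x,y)$, and this inclusion is a canonical section of the counit $LU(Z)(x,y)\to Z(x,y)$ in $\Ch$. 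Hence $U'_{n+1}(x,y)\to V'_{n+1}(x,y)$ is split in $\Ch$. Because this splitting lives entirely in the $U'_{n+1}$ factor, composing with the structural map $U'_{n+1}\hookrightarrow U'_{n+1}\coprod_{i\leq n}V_i$ yields a splitting of $U_{n+1}(x,y)\to V'_{n+1}(x,y)=T_*^{\partial\Delta^{n+1}}(x,y)$, as required.

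For the second step, recall that $\hocolim_{\Delta\op} E_*$ in $\Ch$ is computed by the totalization of the bicomplex with $E_n$ in horizontal degree $n$, internal differential $d_{E_n}$, and horizontal differential $\sum_i(-1)^i\partial_i$. Given an augmentation $\varepsilon:E_*\to E$, the splittings $s_n$ of $E_n\to E_*^{\partial\Delta^n}$ in a split hypercover can be assembled into an extra degeneracy (in the sense of May), which produces an explicit contracting chain homotopy on the cone of $\mathrm{Tot}(E_*)\to E$. Thus $\hocolim_{\Delta\op}E_*\simeq E$ in $\Ch$, and applying this to $E_*=T_*(x,y)$ and $E=T(x,y)$ yields the lemma.

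The main obstacle is the bookkeeping in the first step: coproducts of free dg-categories in $\dg_\O$ restricted to a fixed pair of objects are not merely direct sums of morphism complexes but sums over all composable paths, so one must be careful to check that the $m=0$ splitting genuinely survives the coproduct $U'_{n+1}\coprod_{i\leq n}V_i$ and interacts correctly with the face and degeneracy structure induced by the $V_i$. Once this is pinned down, the proof of step two is essentially formal and amounts to the standard extra-degeneracy contracting-homotopy argument transposed to the cochain setting.
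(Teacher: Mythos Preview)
Your reduction in Step 1 is the same as the paper's: the paper also observes (in the remark preceding Corollary \ref{Coro: split hyper}) that the restriction $T_*(x,y)\to T(x,y)$ is a split hypercover in $\Ch$, and you give a more explicit reason via the $m=0$ summand of $LU(-)(x,y)$.

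Step 2 is where you genuinely diverge. The paper proves that a split hypercover $E_*\to E$ in $\Ch$ has $\hocolim E_*\simeq E$ by first treating the connective case via Dold--Kan and the forgetful functor to $\sS$, invoking Lurie's result (Proposition \ref{Prop: hypercovers sSet}) on simplicial-set hypercovers, and then reducing the unbounded case to the connective one through a na\"ive truncation argument supported by a sublemma showing $H^i(E_*^K)\simeq H^i(E_*)^K$ for finite $K$. Your route is more direct and entirely internal to $\Ch$: build a contracting homotopy on the total complex from the matching-map splittings. This avoids both the detour through $\sS$ and the truncation/sublemma machinery, which is a real simplification. The trade-off is that your key assertion---that the sections $s_n$ of $E_n\to E_*^{\partial\Delta^n}$ ``assemble into an extra degeneracy''---is not immediate and you do not justify it. What one actually needs are maps $h_n:E_n\to E_{n+1}$ (for $n\geq -1$) with $d_0 h_n=\mathrm{id}$ and $d_i h_n=h_{n-1}d_{i-1}$ for $i\geq 1$; these are constructed inductively by checking that the tuple $(\mathrm{id},h_{n-1}d_0,\ldots,h_{n-1}d_n)$ satisfies the simplicial identities and hence lands in $E_*^{\partial\Delta^{n+1}}$, after which one composes with the given section. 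That verification is short but should be written out---without it your Step 2 is an assertion rather than an argument.
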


\begin{proof}

As we said in the last remark, if $T_*\to T$ is a $\Free$-hypercover constructed using the aforementioned theorem, then for all $x,y\in\O$, $T_*(x,y)\to T(x,y)$ is a split epimorphism. We will then prove that if $E_*\to E$ is a split hypercover of complexes, then $\hocolim_{\Delta\op}E_i \simeq E$.

We start with the connective case. Let us assume that $E_*, E\in\Ch^{\leq 0}$, and $E_*\to E$ is a split hypercover in $\Ch^{\leq 0}$. We can use the Dold-Kan equivalence, $DK:\sAb\rightleftharpoons \Ch^{\leq 0}:DK^{-1}$ combined with the forgetful functor $\mathcal{U}:\sAb\to \sS$. The morphism $\mathcal{U}DK^{-1}(E_*)\to \mathcal{U}DK^{-1}(E)$ is surjective over $\pi_0$ in $\sS$, i.e. it is a hypercover of simplicial sets. By Proposition \ref{Prop: hypercovers sSet} we have that $\hocolim_{\Delta\op}\mathcal{U}DK^{-1}(E_i)\simeq \mathcal{U}DK^{-1}(E)$ in $\sS$. The forgetful functor commutes with homotopy colimits over $\Delta\op$, so we have that $\hocolim_ {\Delta\op}DK^{-1}(E_i)\to DK^{-1}(E)$. But the Dold-Kan equivalence isn't just an equivalence: it is a model equivalence, meaning that it is an equivalence of categories that also preserves the model structure (\cite[Chapter III, Section 2]{JG}). In particular, if we apply $DK$ we have that $\hocolim_{\Delta\op}E_i\simeq E$ and we have finished.

We have then that the connective case is true. We will now reduce the general case to the connective case, using the naïve truncation $\tau:\Ch\to \Ch^{\leq 0}$ such that if $E\in\Ch$, $H^i(\tau(E))=H^i(E)$ for all $i\leq 0$ and  $H^i(\tau(E))=0$ for all $i>0$. But knowing that $E_*\to E$ is a split hypercover in $\Ch$ doesn't guarantee that $\tau(E_*)\to \tau(E)$ will also be a split hypercover in $\Ch^{\leq 0}$. To prove that, we need the following result.

\begin{slema}\label{SLem: h puissance} Let $E_*\to E$ be a split hypercover in $\Ch$ and $K$ a finite simplicial object. Then  we have the equivalence
$$H^i(E_*^K)\simeq H^i(E_*)^K. $$ 
\end{slema}
\begin{proof}
We are going to prove this by induction over the dimension of $K$. Let us prove that for all $n\in\N$, if $\dim K=n$, then $H^i(E_*^K)\simeq H^i(E_*)^K$.
\begin{itemize}
	\item $n=0$. This is almost immediate. Indeed, if $\dim K=0$, then $K=\coprod_p *$, and $E_*^K=E_0^p$. And as cohomology commutes with products, we have that $H^i(E_*^K)\simeq H^i(E_*)^K$. 
	\item $n\geq 1$. We assume that for every $K'$ with $\dim K'< n$ we have $H^i(E_*^{K'})\simeq H^i(E_*)^{K'}$. Let $K$ be a simplicial objet of dimension $n$. Then we have the following homotopy coproduct
\begin{center}
	\begin{tikzcd}
		\coprod\partial\Delta^n\ar[r]\ar[d]\arrow[dr,phantom, "\lrcorner^h", very near end]&\coprod \Delta^n\ar[d]\\
		K_{\leq n-1}\ar[r]&K
	\end{tikzcd}
\end{center}
where $K_{\leq n-1}$ is the simplicial subobject of $K$ of dimension $n-1$. In turn, that square gives us the following homotopy products 
\begin{center}
	\begin{tikzcd}
		E_*^K\ar[r]\ar[d]\arrow[dr,phantom, "\ulcorner^h", very near start]&E_*^{K_{\leq n-1}}\ar[d]\\
		\prod E_*^{\Delta^n}\ar[r]&\prod E_*^{\partial\Delta^n}
	\end{tikzcd}
%\end{center}
%\begin{center}
	\begin{tikzcd}
		H^i(E_*)^K\ar[r]\ar[d]\arrow[dr,phantom, "\ulcorner^h", very near start]&H^i(E_*)^{K_{\leq n-1}}\ar[d]\\
		\prod H^i(E_*)^{\Delta^n}\ar[r]&\prod H^i(E_*)^{\partial\Delta^n}.
	\end{tikzcd}
\end{center}
Now, we would like to have that if we take the $H^i$ on the first square we still have a homotopy product. That is not true in general. But we know something extra about this square: indeed, as $E_*\to E$ is a split hypercover, we know that $\prod E_*^{\Delta^n}\to\prod E_*^{\partial\Delta^n}$ is a split epimorphism, and in that case we do have that the square 
\begin{center}
	\begin{tikzcd}
		H^i(E_*^K)\ar[r]\ar[d]\arrow[dr,phantom, "\ulcorner^h", very near start]&H^i(E_*^{K_{\leq n-1}})\ar[d]\\
		\prod H^i(E_*^{\Delta^n})\ar[r]&\prod H^i(E_*^{\partial\Delta^n})
	\end{tikzcd}
\end{center}
is a homotopy product. By induction, we have $H^i(E_*)^{K_{\leq n-1}}\simeq H^i(E_*^{K_{\leq n-1})}$ and $\prod H^i(E_*)^{\partial\Delta^n}\simeq \prod H^i(E_*^{\partial\Delta^n})$ (because $\dim K_{\leq n-1}=\dim \partial\Delta^n=n-1$); and by definition we have that $H^i(E_*)^{\Delta^n}\simeq H^i(E_n)\simeq H^i(E_*^{\Delta^n})$. We have then the following cubic diagram, where both the front and the back square are homotopy products and three out of four front-to-back arrows are equivalences.
\begin{center}
	\begin{tikzcd}[row sep=scriptsize,column sep=scriptsize]
		& H^i(E_*^K)\arrow[dl]\arrow[rr]\arrow[dd] & & H^i(E_*^{K_{\leq n-1}})\arrow[dl, "\sim"]\arrow[dd] \\
		H^i(E_*)^K\arrow[rr,crossing over]\arrow[dd] & & H^i(E_*)^{K_{\leq n-1}} \\
		& \prod H^i(E_*^{\Delta^n})\arrow[dl, "\sim"]\arrow[rr] &  & \prod H^i(E_*^{\partial\Delta^n})\arrow[dl, "\sim"] \\
		\prod H^i(E_*)^{\Delta^n}\arrow[rr] & & \prod H^i(E_*)^{\partial\Delta^n}\arrow[from=uu,crossing over]\\
	\end{tikzcd}
\end{center}
We have that the fourth arrow is also an equivalence, $H^i(E_*^K)\simeq H^i(E_*)^K$, and we have finished. 
\end{itemize}
\end{proof}
Now that we have that,  we can prove that if $E_*\to E$ is a split hypercover, then $\tau(E_*)\to \tau(E)$ is too. Indeed,  by Sublemma \ref{SLem: h puissance}, we have that for all $i\leq 0$,
$$H^i(\tau(E_*)^{\partial\Delta^n})\simeq H^i(\tau(E_*))^{\partial\Delta^n}\simeq H^i(E_*)^{\partial\Delta^n}\simeq H^i(\tau(E_*^{\partial\Delta^n})) $$
and $\tau(E_*)^{\partial\Delta^n}\simeq \tau(E_*^{\partial\Delta^n})$. We then have that for all $n\in\N,$ $\tau(E)_n\to \tau(E_*)^{\partial\Delta^n}\simeq \tau (E_*^{\partial\Delta^n})$ is a split epimorphism, and $\tau(E_*)\to \tau(E)$ is a split epimorphism.

Now, if $E_*\to E$ is a split hypercover, we have that for all $i\in\N$, $E_*[-i]\to E[-i]$ is also a split hypercover and $\tau(E_*[-i])\to \tau(E[-i])$ is too. By the connective case, we have that $\hocolim \tau(E_n[-i])\simeq \tau(E[-i])$. As this is true for all $i\in\N$, we have that $\hocolim E_n\simeq E$.

In conclusion, if $T_*\to T$ is a hypercover constructed using Theorem \ref{Th: existence hypercovers}, we have that $\hocolim(T_i(x,y))\simeq T(x,y)$ for all $x,y\in\Ob(T)$ in $\Ch$ and we have finished. 
 \end{proof}

We're almost ready to prove that if $T_*\to T$ is a $\Free$-hypercover constructed as instructed, then $\hocolim T_i\to T$ is a weak equivalence. We have proven that the complexes of morphisms have that condition. But does it transfer well from complexes to dg-categories and vice-versa?

\begin{lema}\label{Lem: forget commutes} The forgetful functor $U:\dg_\mathcal{O}\to Gr(\Ch)_\mathcal{O}$ commutes with homotopy colimits over $\Delta\op$.
\end{lema}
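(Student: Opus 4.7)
The plan is to exploit the monadic structure of the adjunction $L : Gr(\Ch)_\mathcal{O} \rightleftharpoons \dg_\mathcal{O} : U$. The forgetful functor $U$, being right adjoint to $L$, realizes $\dg_\mathcal{O}$ as monadic over $Gr(\Ch)_\mathcal{O}$ with monad $T = UL$ given explicitly, on a graph $X$ with objects $\mathcal{O}$, by the tensor-algebra formula
$$T(X)(x,y) = \bigoplus_{m \in \N} \bigoplus_{(x_1,\ldots,x_m) \in \mathcal{O}^m} X(x,x_1) \otimes \cdots \otimes X(x_m,y)$$
from Definition \ref{Def: def free}. The strategy is to show that $T$ preserves $\Delta\op$-indexed homotopy colimits, and then conclude via the general principle that such forgetful functors preserve (in fact create) the homotopy colimits preserved by the associated monad.

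First, I would verify that $T$ preserves homotopy colimits over $\Delta\op$. The monad is assembled from two operations: arbitrary direct sums, which trivially preserve all colimits, and iterated tensor products in $\Ch$. The only nontrivial point is therefore the interchange
$$\hocolim_{\Delta\op}(A_n \otimes B_n) \simeq (\hocolim_{\Delta\op} A_n) \otimes (\hocolim_{\Delta\op} B_n)$$
for simplicial objects $A_*, B_*$ in $\Ch$, which extends to $m$-fold tensor products by iteration. This rests on two standard facts: the tensor product is a left Quillen bifunctor on $\Ch$, so it derives correctly and preserves homotopy colimits separately in each variable; and $\Delta\op$ is a sifted category, so the diagonal $\Delta\op \to \Delta\op \times \Delta\op$ is homotopy cofinal, which converts separate preservation into joint preservation along the diagonal.

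Second, I would apply the derived monadic principle: if a monad $T$ on a suitable model category preserves a class of homotopy colimits, then the forgetful functor from $T$-algebras preserves (and creates) them. In our setting this gives, for any simplicial object $T_* \in \dg_\mathcal{O}^{\Delta\op}$, that $U(\hocolim_{\Delta\op} T_*) \simeq \hocolim_{\Delta\op} U(T_*)$, computed levelwise on morphism complexes, with composition induced by transporting the composition maps $T_n(x,y) \otimes T_n(y,z) \to T_n(x,z)$ through the interchange equivalence above.

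The principal obstacle is handling the sifted-colimit/tensor-product interchange at the homotopy level without inappropriate cofibrancy hypotheses. I would address this by working with Reedy cofibrant replacements in $\Ch^{\Delta\op}$ so that the required tensor products are automatically homotopy invariant; in the applications within this section (in particular to $\Free$-hypercovers constructed via Proposition \ref{Prop: free hyper}), the simplicial objects arise with cofibrant hom-complexes by Corollary \ref{ch. 1:hom-cofib}, so no additional replacement is needed and the computation is strictly compatible.
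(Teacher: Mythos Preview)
Your proposal is correct and follows essentially the same route as the paper: both arguments rest on the fact that $\Delta\op$ is sifted and that the free-algebra monad (equivalently, the tensor operations building it) preserves sifted homotopy colimits, so the forgetful functor from algebras creates them.

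The only difference is packaging. The paper identifies a monoidal structure on $Gr(\Ch)_\mathcal{O}$, namely $(G\otimes G')(x,y)=\bigoplus_z G(x,z)\otimes G'(z,y)$, notes that $\dg_\mathcal{O}=\Alg(Gr(\Ch)_\mathcal{O})$, and then invokes \cite[Lemma~4.1.8.13]{LurieHA} as a black box to conclude. Your version instead unpacks the content of that lemma by hand: you verify directly that the monad $T=UL$ preserves $\Delta\op$-colimits via the sifted/tensor interchange, and then appeal to the ``derived monadic principle'' that $U$ creates the colimits $T$ preserves. That principle is exactly what Lurie's lemma supplies, so your argument is not more elementary---it still needs the same input, just stated abstractly rather than cited. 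If you want your proof to be self-contained, you should either give a reference for that monadic creation statement (e.g.\ Lurie, \emph{Higher Algebra}, \S3.2.3 or \S4.1.8) or sketch why a bar-resolution argument gives it.
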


\begin{proof}
This is a direct consequence of \cite[Lemma 4.1.8.13.]{LurieHA}, in particular of its proof. Indeed, in that result we say that if we have a combinatorial monoidal model category $A$ and a small category $C$ such that its nerve $N(C)$ is sifted, and we have on one hand that $A$'s monoidal structure is symmetric and satisfies the monoid axiom, and on the other hand that $A$ is left proper and its cofibrations are generated by cofibrations between cofibrant objects; then, the forgetful functor $\Alg(A)\to A$ commutes with homotopy colimits over $C$.

Now, we take the monoidal structure on the graphs with fixed objects to be as follows: let $G,G'\in Gr(\Ch)_\O$ and $x,y\in\O$, we define
$$(G\otimes G')(x,y)=\oplus_z G(x,z)\otimes G'(z,y). $$
The category of dg-categories with fixed objects is the category of algebras over $Gr(\Ch)_\O$ with this monoidal structure, $\Alg (Gr(\Ch)_\O)$. As the category of graphs with this monoidal structure satisfies the above conditions and $N(\Delta)$ is sifted, we have that the forgetful functor $\dg_\O\to Gr(\Ch)_\O$ commutes with homotopy colimits and we have finished our proof.
\end{proof}

We are now ready to prove our result.

\begin{prop}\label{Prop: free colimitant} Let $T$ be a dg-category with fixed objects, and $T_*\to T$ a $\Free$-hypercover in $\dg_\O$, the category of dg-categories with fixed objects $\O=\Ob(T)$. Then we have that $\hocolim T_i\simeq T$ in $\dg$.
\end{prop}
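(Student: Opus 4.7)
The plan is to reduce the statement to one about Hom-complexes via the forgetful functor $U:\dg_{\mathcal{O}}\to Gr(\Ch)_{\mathcal{O}}$, and then to invoke the two preparatory lemmas just proved.

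First I would consider the natural comparison morphism $\hocolim_{\Delta\op}T_i\to T$ in $\dg_{\mathcal{O}}$ and apply $U$ to it. By Lemma \ref{Lem: forget commutes} the forgetful functor commutes with homotopy colimits over $\Delta\op$, so the resulting morphism in $Gr(\Ch)_{\mathcal{O}}$ is canonically identified with $\hocolim_{\Delta\op}U(T_i)\to U(T)$.

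Next I would observe that the category $Gr(\Ch)_{\mathcal{O}}$ of $\Ch$-enriched graphs on a fixed vertex set $\mathcal{O}$ is equivalent to the product category $\prod_{(x,y)\in\mathcal{O}^2}\Ch$, so that homotopy colimits, weak equivalences, etc.\ are all computed componentwise. In particular, for each pair $(x,y)$, the $(x,y)$-component of the above morphism is exactly $\hocolim_{\Delta\op}T_i(x,y)\to T(x,y)$ in $\Ch$. Applying Lemma \ref{Lem: hyper complexes} pair by pair, each such component is a quasi-isomorphism, so $\hocolim_{\Delta\op}U(T_i)\to U(T)$ is a weak equivalence in $Gr(\Ch)_{\mathcal{O}}$. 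To finish, I would invoke conservativity: by definition of Tabuada's model structure restricted to a fixed set of objects, a morphism in $\dg_{\mathcal{O}}$ is a weak equivalence if and only if its underlying morphism of graphs is. Hence $\hocolim_{\Delta\op}T_i\to T$ is a weak equivalence in $\dg_{\mathcal{O}}$, and a fortiori a quasi-equivalence in $\dg$.

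The subtle point, which I would address carefully, is the applicability of Lemma \ref{Lem: hyper complexes}: as stated, it is proved for hypercovers produced by the explicit recipe of Theorem \ref{Th: existence hypercovers} and Proposition \ref{Prop: free hyper}, where the Hom-complexes form \emph{split} hypercovers in $\Ch$. For a general $\Free$-hypercover one has to verify that the $\Free$-epimorphism condition in $\dg_{\mathcal{O}}$ forces the same condition on each Hom-complex. This follows by testing the $\Free$-epimorphism condition against the free dg-categories of the form $L(E)_{x,y}$ (with $E$ ranging over perfect complexes and $(x,y)$ ranging over $\mathcal{O}^2$), and using that every cofibrant complex is a filtered homotopy colimit of perfect ones, exactly as in the proof of Proposition \ref{Prop: free hyper}. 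Alternatively, one can argue that any $\Free$-hypercover of $T$ admits a refinement by a hypercover constructed via Proposition \ref{Prop: free hyper} and that $\hocolim T_i\to T$ is invariant under such refinement; either route yields the splitness needed to apply Lemma \ref{Lem: hyper complexes}, after which the above three-step argument concludes the proof.
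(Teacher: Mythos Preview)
Your reduction to Hom-complexes via Lemmas \ref{Lem: forget commutes} and \ref{Lem: hyper complexes} is exactly how the paper handles the first half of the argument, and your paragraph on the applicability of Lemma \ref{Lem: hyper complexes} to an arbitrary $\Free$-hypercover is in fact more careful than what the paper writes there.

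There is, however, a genuine gap hidden in the phrase ``and a fortiori a quasi-equivalence in $\dg$''. What you have established is that the homotopy colimit \emph{computed in $\dg_{\mathcal{O}}$} maps to $T$ by a weak equivalence. The proposition, and its use in Theorem \ref{Th: fully faithfulness}, concerns the homotopy colimit \emph{computed in $\dg$}. The forgetful functor $\Phi:\dg_{\mathcal{O}}\to\dg$ preserves weak equivalences, so your argument yields $\Phi(\hocolim^{\dg_{\mathcal{O}}}T_i)\simeq T$; what is missing is the comparison $\hocolim^{\dg}\Phi(T_i)\simeq\Phi(\hocolim^{\dg_{\mathcal{O}}}T_i)$, i.e.\ that $\Phi$ commutes with $\Delta\op$-indexed homotopy colimits. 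This is not automatic: $\dg_{\mathcal{O}}$ is not merely an under-category of $\dg$ (its morphisms are required to be the identity on objects), so the standard ``connected colimits in an under-category'' argument does not apply directly.

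The paper devotes roughly half of its proof to precisely this point. It factors $\Phi$ through $\Xi:\dg_{\mathcal{O}}\to\coprod_{\mathcal{O}}k/\dg$, constructs an explicit right adjoint $\Gamma$ with $\Gamma\Xi=\Id$ so that $\Xi$ is fully faithful, and then runs a fiber-sequence comparison of mapping spaces (against an arbitrary $T'\in\dg$) to deduce that the comparison map between the two homotopy colimits is an equivalence. You should add this step; without it the proof does not reach the conclusion as stated.
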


\begin{proof}
Let $\phi:\hocolim T_*\to T$ be the morphism from the homotopy colimit to $T$. We need to prove that this morphism is a weak equivalence in $\dg_\O$. But as we have the same objects, the quasi-essential surjectivity is automatic. We only need to prove that $\phi$ is quasi-fully faithful. By definition, that means that for all $x,y\in\O$, we need to prove that $(\hocolim T_i)(x,y)\simeq T(x,y)$. But as we know from Lemma \ref{Lem: forget commutes} that the forgetful functor commutes with $\hocolim$, that is equivalent to asking that for all $x,y\in\O$, we have $\hocolim(T_i(x,y))\simeq T(x,y)$. By Lemma \ref{Lem: hyper complexes}, that is true. 

We have that $\phi:\hocolim T_i\simeq T$ in $\dg_\O$. If we can prove that the functor $\Phi:\dg_\O\to \dg$ commutes with homotopy colimits over $\Delta\op$, we have finished the proof. Let $X=\hocolim_{\Delta\op} T_i$ in $\dg_\O$, and let us call the forgetful morphisms $\Phi:\dg_\O\to \dg$ and $\Xi:\dg_\O\to \coprod_\O k/ \dg$, where for all $X\in \dg_\O$ its image $\Xi(X)$ gives us a morphism from $\coprod_\O k$ to $\Phi(X)$.

 For all $T'\in\dg$ we then have a morphism of the form
$$\Map(\Phi(X), T')\to \Map(\coprod k,T')\simeq \prod\Map(k,T'). $$
By getting the fiber of this morphism and doing the same thing with $\hocolim T_i$, we get the following diagram:
\begin{center}
	\begin{tikzcd}
		\Map(X,T')\ar[r]\ar[d,"\sim"]&\Map(\Xi(X),T')\ar[r]\ar[d, "f"]&\Map(\Phi(X),T')\ar[r]\ar[d]&\prod\Map(k,T')\ar[d,"="]\\
		\holim\Map(T_i,T')\ar[r]&\holim\Map(\Xi(T_i),T')\ar[r]&\holim\Map(\Phi(T_i),T')\ar[r]&\prod\Map(k,T').
	\end{tikzcd}
\end{center}

If we can prove that $f$ is a weak equivalence, we have finished. For that, we are going to use the far-left square of this diagram. Indeed, if the morphisms $\Map(\Xi,T')$ and $\holim \Map(\Xi,T')$ are weak equivalences, by 2-out-of-3 then $f$ will be a weak equivalence too. If we can prove that $\Xi$ is fully faithful, we will have everything we need. 

We have an adjunction $\Xi:\dg_\O\rightleftharpoons \coprod k/\dg:\Gamma$ where the right adjoint $\Gamma$ is such that for all $(K,\{x_\alpha\}_{\alpha\in\O}, x_\alpha\in\Ob(K))\in\coprod k/\dg$, the objects of $\Gamma(K)$ are $\O$ and for every $\alpha,\beta\in\O$, $\Gamma(K)(\alpha,\beta)=K(x_\alpha, x_\beta)$. It is easy to see that $\Gamma\Xi=\Id_{\dg_\O}$. We have then that $\Xi$ is fully faithful and then that $\Map(\Xi,T')$ is a weak equivalence.

As such, we have that 
\begin{center}
	\begin{tikzcd}
		\Map(X,T')\ar[r, "\sim"]\ar[d,"\sim"]&\Map(\Xi(X),T')\ar[d, "f"]\\
		\holim\Map(T_i,T')\ar[r,"\sim"]&\holim\Map(\Xi(T_i),T')
	\end{tikzcd}
\end{center}
and $f$ is a weak equivalence. So $\Phi$ commutes with colimits, and since $\hocolim T_i\simeq T$ in $\dg_\O$, we have $\hocolim T_i\simeq T$ in $\dg$ and we have finished this proof.
\end{proof}

We can now go back to our functor $\Sing$.

\section{Proving the equivalence}

We ended Section \ref{sec: section 1} by stating the following Hypothesis: 

\begin{hyp}\label{Hyp: Re-zk}Let $f:F\to G$ be a morphism between two functors satisfying the dg-Segal conditions. Then $f$ is a DK-equivalence if and only if it is a weak equivalence in the complete dg-Segal model structure.  
\end{hyp}

This is an important hypothesis. Indeed, even though everything we have stated up until now works without the need of the hypothesis, we cannot prove the equivalence of categories without it. Even with the hypothesis hanging, though, there is much we can do. In this section, we will first prove the fully faithfulness (up to a certain DK-equivalence being a weak equivalence) with the help of the hypercovers on dg-categories we defined in the last section. Then, we will prove the essential surjectivity by defining a concept of dg-Segal hypercover, and using it to define for all dg-Segal space $F$ a dg-category whose image by $\Sing$ is DK-equivalent to $F$.

\subsection{The functor $Sing$ is fully faithful}

Let us prove that $\Sing$ is, in fact, fully faithful. Most of the heavy lifting in this section will be done by the hypercovers in the last section. Indeed, we will prove that the restriction to $c\FreeS$ injects fully-faithfully, and then in a way "chop" every dg-category into free parts using the hypercover construction, as we can write every dg-category as a certain colimit of elements in $\Free$. We will finally prove that $\Sing$ commutes with those colimits up to DK-equivalence, and use the hypothesis to conclude. 

\begin{prop}\label{Prop: fully faithful for free}With the construction from Theorem \ref{Th: construction Sing_w}, the functor
$$\Sing^{c\Free}:\Ho(c\Free)\to \Ho(\Fun^\S(\FreeS\op,\sS)) $$
is fully faithful.
\end{prop}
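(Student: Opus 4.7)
My approach is via the enriched (derived) Yoneda lemma in the projective model structure on $\Fun^\S(\FreeS\op,\sS)$. The first thing to do is to identify, for every $L\in c\Free$, the simplicial presheaf $\Sing(L)$ with the representable presheaf $\underline{h}_L$ at $L$. Indeed, unwinding the construction of $\Sing$ in the proof of Theorem \ref{Th: construction Sing_w}, for any $L'\in c\Free\subset \FreeS$ one has
$$\Sing(L)(L')\;=\;(j^*\circ\Sing')(L)(L')\;=\;\R\Map_{\dg}(L',L),$$
and by the very definition of $\FreeS$ as the full simplicial subcategory of $L_W\dg$ on the objects of $c\Free$, this mapping space coincides with $\Map_{\FreeS}(L',L) = \underline{h}_L(L')$. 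Thus $\Sing(L)$ agrees with $\underline{h}_L$ levelwise in $\Fun^\S(\FreeS\op,\sS)$, and in particular the identification is compatible with the simplicial enrichment.

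\textbf{Main step.} Once this identification is secured, I would apply the enriched Yoneda lemma: for any projectively fibrant $F\in\Fun^\S(\FreeS\op,\sS)$,
$$\R\Map(\underline{h}_L,F)\;\simeq\;F(L).$$
Applied to $F=\Sing(L')$ — which is projectively fibrant because each value $\Sing(L')(M)=\R\Map_{\dg}(M,L')$ is a Kan complex — this yields
$$\R\Map_{\Fun^\S(\FreeS\op,\sS)}\bigl(\Sing(L),\Sing(L')\bigr)\;\simeq\;\Sing(L')(L)\;=\;\R\Map_{\dg}(L,L').$$
Taking $\pi_0$ then gives the desired bijection on $\Ho$-level hom-sets, and the full derived mapping spaces themselves agree, which is exactly the statement that $\Sing^{c\Free}$ is fully faithful on $\Ho(c\Free)$.

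\textbf{Expected obstacle.} The delicate point will be checking that this equivalence is really induced by the functor $\Sing^{c\Free}$ itself, rather than being a mere abstract isomorphism. This is a naturality/coherence statement asserting that the restriction of $\Sing$ to $c\Free$ factors, up to natural equivalence, through the (derived) Yoneda embedding $c\Free\hookrightarrow \FreeS\hookrightarrow \Fun^\S(\FreeS\op,\sS)$. One way to handle it is to directly track the zigzag of Quillen adjunctions $\Re\dashv\Sing$ from Theorem \ref{Th: construction Sing_w} and observe that none of the intermediate stages (the passages through $l^*$ and $j^*$) affects the representability of the image of an object of $c\Free$, since $c\Free$ already sits inside $\FreeS$ and inside $L_W\dg$. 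An alternative and slightly cleaner route is to invoke Theorem \ref{Th: fully-faith} applied to $C=\dg$ — which gives fully faithfulness of $\R\Sing$ into $\Ho(\sS^{\dg,W})$ — transport this along the chain of Quillen equivalences of Theorem \ref{Th: equivalence for functor categories} to $\Ho(\Fun^\S(L_W\dg\op,\sS))$, and finally observe that further restriction to $\FreeS$ is harmless on objects of $c\Free$ by the same Yoneda identification.
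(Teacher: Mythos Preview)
Your argument is correct and rests on the same core idea as the paper's proof --- the derived Yoneda embedding --- but the packaging differs. You work directly in $\Fun^\S(\FreeS\op,\sS)$: identify $\Sing(L)$ with the representable $\underline{h}_L$ and then apply the enriched Yoneda lemma. The paper instead invokes the ready-made machinery from \cite{HAG1}, factoring $\Sing^{c\Free}$ as
\[
\Ho(c\Free)\;\longrightarrow\;\Ho(\sS^{c\Free,W})\;\longrightarrow\;\Ho(\Fun^\S(\FreeS\op,\sS)),
\]
where the first arrow is fully faithful by Theorem~\ref{Th: fully-faith} (applied to $c\Free$ as a pseudo-model category) and the second is an equivalence by Theorem~\ref{Th: equivalence for functor categories}. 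Your ``alternative route'' at the end is close in spirit to this, though the paper applies Theorem~\ref{Th: fully-faith} to $c\Free$ itself rather than to all of $\dg$. Your direct approach has the advantage of avoiding the intermediate category $\sS^{c\Free,W}$ and the somewhat delicate identification of $L_W(c\Free)$ with $\FreeS$ (which the paper itself warns are not the same); the paper's approach has the advantage of citing black-box results and thereby sidestepping the explicit naturality check you flag as an obstacle.
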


\begin{proof}
For this, we use Theorems \ref{Th: equivalence for functor categories} and \ref{Th: fully-faith}, taking the model category to be $\FreeS$ and $W$ to be the weak equivalences in the full subcategory of $\dg$ of free categories of finite type. Indeed, if we go down to the homotopy category, we can factorize $\Sing^{c\Free}$ as
$$\Ho(c\Free)\to \Ho(\sS^{c\Free,W})\to \Ho(\Fun^\S(L_W\FreeS\op, \sS))=\Ho(\Fun^\S(\FreeS\op,\sS)). $$
But we know from Theorem \ref{Th: fully-faith} that the first morphism here is fully faithful, and from Theorem \ref{Th: equivalence for functor categories} that the second one is an equivalence. So the composition of the two is fully faithful, and we have finished our proof.
\end{proof}

We have finally everything needed in order to use our free hypercovers. Let us prove that we have the necessary DK-equivalences. For that, one important object will be the homotopy colimit of the image of a hypercover by $\Sing$. Although the image of a dg-category by the $\Sing$ functor is a complete dg-Segal space, we haven't proven that that property is closed under homotopy colimits. We will have, then, to start by proving that here it is in fact the case.

\begin{lema}\label{ch.2 representable colimit}Let $T_*\to T$ be a $\Free$-hypercover of $T$ constructed as in Theorem \ref{Th: existence hypercovers} and Proposition \ref{Prop: free hyper}. Let $x,y\in\pi_0(\Sing(T)(k))$ be two objects. Then the homotopy colimit of the dg-mapping space on $x$ and $y$ of its image by $\Sing$, $\hocolim(\Sing(T_i)_{(x,y)})$, is a representable functor.
\end{lema}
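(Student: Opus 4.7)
The plan is to identify the representing object explicitly: I expect $\hocolim_i \Sing(T_i)_{(x,y)}$ to be represented by $T(x,y)$. The argument proceeds by combining the Yoneda-type description of $\Sing(T_i)_{(x,y)}$ available on perfect complexes with the complex-level hypercover result of Lemma \ref{Lem: hyper complexes}.

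First I would use the remark following the representability proposition in Section \ref{sec: section 1}: for each $i$ and every $E \in \Ch^{perf}$,
$$\Sing(T_i)_{(x,y)}(E) \simeq \Map(E, T_i(x,y)),$$
so each $\Sing(T_i)_{(x,y)}$ is representable on $\Ch^{perf}$ by $T_i(x,y)$, and its extension to $\Ch^c$ is obtained from this by passing to filtered colimits in the argument $E = \colim E_j$. Thus to compute $\hocolim_i \Sing(T_i)_{(x,y)}$ it suffices, by the pointwise formula for homotopy colimits of simplicial presheaves, to evaluate on $\Ch^{perf}$ and then extend.

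Next, for any perfect complex $E$, the object $E$ is compact, so $\Map(E, -)$ commutes with homotopy colimits in $\Ch$. Combining this with Lemma \ref{Lem: hyper complexes}, which guarantees $\hocolim_i T_i(x,y) \simeq T(x,y)$ in $\Ch$, one gets
$$\hocolim_i \Sing(T_i)_{(x,y)}(E) \;\simeq\; \hocolim_i \Map(E, T_i(x,y)) \;\simeq\; \Map\!\left(E,\, \hocolim_i T_i(x,y)\right) \;\simeq\; \Map(E, T(x,y)).$$
Hence, on $\Ch^{perf}$, the functor $\hocolim_i \Sing(T_i)_{(x,y)}$ is represented by $T(x,y)$. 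Since the extension from $\Ch^{perf}$ to $\Ch^c$ prescribed in the definition of the dg-mapping space is functorial in the presheaf and compatible with the same extension applied to $\Sing(T)_{(x,y)}$, one concludes $\hocolim_i \Sing(T_i)_{(x,y)} \simeq \Sing(T)_{(x,y)}$ on all of $\Ch^c$, which is representable by the proposition preceding this lemma.

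The main obstacle I anticipate is the compatibility at the last step: one must check that taking the homotopy colimit over the simplicial index $\Delta\op$ commutes with the filtered-colimit extension from $\Ch^{perf}$ to $\Ch^c$. This should follow from the fact that filtered colimits and sifted (in particular $\Delta\op$-indexed) homotopy colimits commute in $\sS$, but it is the only non-formal point in the argument. Everything else is the combination of Yoneda for $\Sing$, compactness of perfect complexes, and the already established hypercover statement on morphism complexes (Lemma \ref{Lem: hyper complexes}).
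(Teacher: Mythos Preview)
There is a genuine gap at the step where you invoke compactness. Saying that $E$ is perfect only buys you that $\Map(E,-):\Ch\to\sS$ commutes with \emph{filtered} homotopy colimits; it does not commute with $\Delta\op$-indexed colimits in general. Indeed, if $\Map(E,-)$ preserved geometric realizations it would also preserve homotopy pushouts (any pushout is the realization of a two-sided bar construction), and this already fails for $E=k$: the coproduct $k\oplus k=k\coprod_0 k$ in $\Ch$ has $\pi_0\Map(k,k\oplus k)=k\oplus k$, while the pushout $\Map(k,k)\coprod_{*}\Map(k,k)$ in $\sS$ has $\pi_0$ equal to the disjoint union $k\sqcup k$. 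So the line ``$E$ is compact, so $\Map(E,-)$ commutes with homotopy colimits in $\Ch$'' is not correct, and the chain of equivalences you write after it is unjustified. Your own final paragraph flags a commutation issue, but the real obstruction is earlier and is not repaired by the filtered/sifted interchange you propose.

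The paper's argument sidesteps this completely by exploiting the \emph{split} nature of the hypercover rather than compactness of the test object. Since each $T_n(x,y)\to T_*(x,y)^{\partial\Delta^n}$ is a split epimorphism in $\Ch$, applying $\Map(E,-)$ (for \emph{any} $E$, perfect or not) still yields a split epimorphism, hence a $\pi_0$-surjection; together with $\Map(E,-)$ preserving the finite limit $(-)^{\partial\Delta^n}$, this shows that $\Map(E,T_*(x,y))\to\Map(E,T(x,y))$ is a hypercover in $\sS$. Proposition~\ref{Prop: hypercovers sSet} then gives $\hocolim_i\Map(E,T_i(x,y))\simeq\Map(E,T(x,y))$ directly. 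Note that this makes both your detour through Lemma~\ref{Lem: hyper complexes} and the extension step from $\Ch^{perf}$ to $\Ch^c$ unnecessary: the argument works uniformly in $E$.
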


\begin{proof}
By the uniqueness of the Yoneda Lemma, asking for $\hocolim(\Sing(T_i)_{(x,y)})$ to be representable is equivalent to asking that there exists a complex of morphisms $T(x,y)\in\Ch$ such that 

$$\hocolim(\Map(-,T_i(x,y))\simeq \Map(-,T(x,y)). $$

But we know that $T_*\to T$ is a $\Free$-hypercover, which in particular means that for all $x,y\in T$, $T_n(x,y)\to T_*(x,y)^{\partial\Delta^n}$ is a split epimorphism. In consequence, we have that for all $E\in\Ch$,
$$\pi_0(\Map(E,T_n(x,y)))\to \pi_0(\Map(E,T_*(x,y)^{\partial\Delta^n}))\simeq \pi_0(\Map(E,T_*(x,y))^{\partial\Delta^n}) $$
is an epimorphism. But we have seen that this is exactly the definition of a hypercover in $\sS$. So we have that $\Map(E,T_*(x,y))$ is a hypercover of $\Map(E,T(x,y))$ in the simplicial sets, and by Proposition \ref{Prop: hypercovers sSet} that means that for all $E$ we have $\hocolim\Map(E,T_*(x,y))\simeq \Map(E,T(x,y))$. That is, by definition, the same as saying that 
$$\hocolim \Sing(T_i)_{(x,y)}=\hocolim(\Map(-,T_i(x,y))\to \Map(-,T(x,y)) $$
is a weak equivalence. The functor is representable and we have finished our proof.
\end{proof}

\begin{lema}\label{ch.2 hocolim dg-Segal}Let $T_*\to T$ be a $\Free$-hypercover of $T\in\dg$ constructed as in Theorem \ref{Th: existence hypercovers} and Proposition \ref{Prop: free hyper}. Then the homotopy colimit of its image by $\Sing$, $\hocolim(\Sing(T_i))$, satisfies the dg-Segal conditions.
\end{lema}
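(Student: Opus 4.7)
The plan is to verify that $F := \hocolim_{\Delta\op} \Sing(T_i)$ satisfies each of the three dg-Segal conditions of Definition \ref{Def: dg-Segal spaces}, taking as input Proposition \ref{Prop: Sing is dg-Segal} (each $\Sing(T_i)$ is already a dg-Segal space) and the fact that homotopy colimits in the projective model structure on $\Fun^\S(\FreeS\op,\sS)$ are computed objectwise in $\sS$. Conditions (1) and (2) are essentially formal: one has $F(\emptyset) = \hocolim_i \Sing(T_i)(\emptyset) \simeq \hocolim_i * \simeq *$, while for (1) the indexing category $\Delta\op$ is sifted, so $\hocolim_{\Delta\op}$ in $\sS$ commutes with finite products; combining this with condition (1) for each $\Sing(T_i)$ yields $F(L \coprod K) \simeq (\hocolim_i \Sing(T_i)(L)) \times (\hocolim_i \Sing(T_i)(K)) = F(L) \times F(K)$.

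The real work is in condition (3): I must show that $\hocolim_{\Delta\op}$ preserves the homotopy pullback witnessing condition (3) for each $\Sing(T_i)$. The key structural input is that $\D(1,n,1) \to \D^c(1,n,1)$ is a generating cofibration of $\dg$ (Theorem \ref{ch. 1:Tab-model}), so by adjointness $\Sing(T_i)(\D^c(1,n,1)) \to \Sing(T_i)(\D(1,n,1))$ is a Kan fibration in $\sS$ for every $i$, as already exploited in the proof of Proposition \ref{Prop: Sing is dg-Segal}. Applied levelwise this produces a pullback square of simplicial objects in $\sS$ whose right leg is a levelwise Kan fibration. To conclude I would invoke a realization-fibration theorem (in the spirit of Bousfield--Friedlander or Rezk's realization lemma): provided the base simplicial space $\Sing(T_\bullet)(\D(1,n,1))$ satisfies the $\pi_*$-Kan (or Reedy fibrancy) condition, the functor $\hocolim_{\Delta\op}$ preserves such a pullback, and condition (3) for $F$ follows.

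The main obstacle is therefore verifying this $\pi_*$-Kan / Reedy fibrancy hypothesis for $\Sing(T_\bullet)(\D(1,n,1))$. I would derive it from the hypercover structure: Corollary \ref{Coro: split hyper} and the construction behind Proposition \ref{Prop: free hyper} supply split epimorphisms at the level of morphism complexes of $T_*$, and the argument used in Lemma \ref{ch.2 representable colimit} shows that these split epimorphisms transport through $\Map(-,T_\bullet)$ to yield hypercovers of simplicial sets, a property that should be strong enough to imply the required Reedy-type condition. Should that approach prove intractable, an alternative is to extend Lemma \ref{ch.2 representable colimit} from individual morphism complexes to all $L(G)$ with $G$ of finite type, establishing $\hocolim_i \Sing(T_i)(L(G)) \simeq \Sing(T)(L(G))$ on the relevant building blocks; condition (3) for $F$ would then follow directly from condition (3) for $\Sing(T)$ via Proposition \ref{Prop: Sing is dg-Segal}.
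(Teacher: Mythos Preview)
Your treatment of conditions (1) and (2) matches the paper's exactly. For condition (3), however, the paper takes a genuinely different route. Rather than attempting to show directly that $\hocolim_{\Delta\op}$ preserves the homotopy pullback square (via Bousfield--Friedlander or a realization-fibration argument), the paper isolates a structural Sublemma: any functor satisfying conditions (1), (2), the milder pullback condition
\[
F\bigl(L\textstyle\coprod_{k\coprod k}K\bigr)\xrightarrow{\ \sim\ } F(L)\times_{F(k)^2}F(K)
\]
(call it condition (4)), and representability of every $F_{(x,y)}$ (condition (5)), automatically satisfies condition (3). The point is that condition (3) is then deduced \emph{fiberwise}: over a fixed pair $(x,y)$ the square becomes $\Map(-,F(x,y))$ applied to the generating cofibration $k[n]\to k^c[n]$, which is a homotopy pullback for formal reasons. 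Condition (5) is exactly Lemma \ref{ch.2 representable colimit}; condition (4) only requires commuting $\hocolim_{\Delta\op}$ with a pullback whose base is $\Sing(T_i)(k)^2$, and since the hypercover has fixed object set $\O$ this base has constant $\pi_0$, making that commutation considerably easier than the one you propose.

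Your direct route is not wrong in spirit, but the step ``hypercover of simplicial sets $\Rightarrow$ $\pi_*$-Kan condition'' is where the argument is thin: the hypercover condition controls surjectivity on $\pi_0$ of matching maps, whereas $\pi_*$-Kan asks for a fibration-type condition on all homotopy groups, and these are not the same. The paper's reduction sidesteps this entirely by trading the complicated base $\Sing(T_\bullet)(\D(1,n,1))$ for the much simpler $\Sing(T_\bullet)(k)^2$ and pushing the remaining content into the representability already established. Your alternative plan---extending Lemma \ref{ch.2 representable colimit} to all $L(G)$---would amount to proving $\hocolim\Sing(T_i)\simeq\Sing(T)$ objectwise, which is strictly stronger than what is needed here and is in fact not what the paper establishes (only a DK-equivalence is proven later, in Lemma \ref{Lem: hocolim DK}).
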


\begin{proof}
In order to prove this result, we need to prove that the homotopy colimit fulfills the three conditions of the definition of dg-Segal space. The first two are straightforward, and hinge on the fact that the homotopy colimit in this case commutes with finite products and the fact that all elements in the image of $\Sing$ satisfy the dg-Segal conditions.

1. Let $L,K$ be two free dg-categories of finite type. By the first dg-Segal condition on the image of $\Sing$, 
$$\hocolim(\Sing(T_i))(K\coprod L)=\hocolim(\Sing(T_i)(K\coprod L))\simeq \hocolim(\Sing(T_i)(K)\times\Sing(T_i)(L)).$$
And as the homotopy colimits over $\Delta\op$ commute with finite products, 
$$\hocolim(\Sing(T_i))(K\coprod L)\simeq \hocolim(\Sing(T_i)(K))\times\hocolim(\Sing(T_i)(L))$$
and we have finished.

2. The second property is trivial. Indeed,
$$\hocolim(\Sing(T_i))(\emptyset)\simeq\hocolim(\Sing(T_i)(\emptyset))\simeq\hocolim(*)\simeq *. $$
We arrive now to the third condition. For an issue of generality and also in order to lighten our notation, we will do the computations in a slightly larger context, with the help of this sublemma.

\begin{slema}\label{SLem: equivalent condition on dg-Segal}Let $F\in\Fun^\S(\FreeS^{c,op},\sS)$ be a functor that satisfies conditions 1 and 2 of the dg-Segal conditions, and also two additional conditions as follows:
\begin{enumerate}
 \setcounter{enumi}{3}
	\item For all $L, K\in\FreeS^c$, $F(L\coprod_{k\coprod k} K)\to F(L)\times_{F(k)\times F(k)} F(K)$ is a weak equivalence. 
	\item For all pairs of objects $x,y\in\pi_0(F(k))$, the associated dg-mapping space on $x$ and $y$ $F_{(x,y)}$ is representable.
\end{enumerate}
Then we have that $F$ satisfies condition 3 of the dg-Segal conditions. In other words, for all $G\in Gr(\Ch)$ a graph of finite type, $x,y\in \Ob(G)$ two objects in $G$, and $\alpha\in Z^n(G(x,y))$ a cycle in $G(x,y)$, the following diagram is a homotopy pullback
	\begin{center}
		\begin{tikzcd}
			F(L(G[\cancel{\alpha}]))\ar[r]\ar[d]\arrow[dr,phantom, "\ulcorner^h", very near start]& F(L(G))\ar[d]\\
			F(\D^c(1,n,1))\ar[r]& F(\D(1,n,1)).
		\end{tikzcd}
	\end{center}	
\end{slema}

\begin{proof}
In order to make this proof easier to follow, we start with the case where $G$ has two objects and move up from there. Assume $G$ is a free dg-category with two objects. There is a morphism $\gamma: F(G)\to \Map(\O_G,\O)$, where $\O_G=\Ob(G)$ is the set of objects of $G$ and $\O=\pi_0(F(k))$. We know that $\pi_0(\Map(\O_G,\O))$ is the set of morphisms from $\O_G$ to $\O$. Then, if we take $f:\O_G\to \O$, the fiber of $\gamma$ is $F_{(f(x),f(y))}(G)$.  By conflating the two points on $G$ with the two points on $\D^c(1,s,1)$ and $\D(1,s,1)$, we can reduce the problem to asking that for all $f:\O_G\to \O$ the following diagram is a homotopy pullback
	\begin{center}
		\begin{tikzcd}
			F_{(f(x), f(y))}(G[\cancel{\alpha}])\ar[r]\ar[d]& F_{(f(x),f(y))}(G)\ar[d]\\
			F_{(f(x),f(y))}(\D^c(1,n,1))\ar[r]& F_{(f(x),f(y))}(\D(1,n,1))
		\end{tikzcd}
	\end{center}
	 But by condition 5 the functor $F_{(x',y')}$ is representable for all $x',y'\in\O$, so $F_{(f(x),f(y))}(E_{x,y})\simeq \Map(E,F(f(x),f(y)))$, where $E_{x,y}$ is the free category given by two points and $E$ as the morphism of complexes from $x$ to $y$. So the question ends up being whether
	  	\begin{center}
		\begin{tikzcd}
			\Map(G[\cancel{\alpha}],F(f(x),f(y)))\ar[r]\ar[d]& \Map(G,F(f(x),f(y)))\ar[d]\\
			\Map(k^c[n],F(f(x),f(y)))\ar[r]& \Map(k[n],F(f(x),f(y)))
		\end{tikzcd}
	\end{center}
	is a homotopy pullback. Now, this diagram is in $\sS$, which is a proper model category, meaning that if one of the morphisms of this diagram is a fibration, we have finished. And indeed, the morphism $k[n]\to k^c[n]$ is a generating cofibration of $\Ch$, which means that the morphism $\Map(k^c[n],F(f(x),f(y)))\to \Map(k[n],F(f(x),f(y)))$ is a fibration. The diagram is a homotopy pullback and we have finished. If $G$ has two objects, $F$ satisfies the third dg-Segal condition on it.
	
	For the passage from two elements to more, it is a question of noticing that condition 3 is merely a local condition: intuitively, the only change being made in it is in relation to the morphisms between $x$ and $y$. Outside of that, it changes nothing whether the original free dg-category $G$ had two objects or three thousand. Following that logic, we will decompose $G$ in two sections: one that changes and one that does not.
	
	Let $G$ be a free dg-category of finite type, and let $x,y$ be two objects in $G$. It is easy to see that we can write it as $G=G^0\coprod_{k\coprod k}G_{x,y}$, where $G_{x,y}$ is a free dg-category with two objects and $G(x,y)$ as the morphism complex between those objects; and $G^0$ is a free dg-category such that $G^0$ has the same objects as $G$ and $G^0(x',y')=G(x',y')$ if $(x',y')\neq (x,y)$ and $G^0(x,y)=0$. As $G_{x,y}$ is a free dg-category with two objects, we have that $F$ satisfies the third dg-Segal condition on $G_{x,y}$. We can then construct a tower of homotopy pullbacks of the following form:
	\begin{center}
		\begin{tikzcd}
			F(G^0)\times_{F(k)\times F(k)}	F(G_{x,y}[\cancel{\alpha}])\ar[r]\ar[d]\arrow[dr,phantom, "\ulcorner^h", very near start]& F(G^0)\times_{F(k)\times F(k)}	F(G_{x,y})\ar[d]\\
			F(G_{x,y}[\cancel{\alpha}])\ar[r]\ar[d]\arrow[dr,phantom, "\ulcorner^h", very near start]& F(G_{x,y})\ar[d]\\
			F(\D^c(1,n,1))\ar[r]& F(\D(1,n,1))
		\end{tikzcd}
	\end{center}	
and the outside square is a homotopy pullback. By condition 4, we have that 
$$F(G^0)\times_{F(k)\times F(k)}	F(G_{x,y}[\cancel{\alpha}])\simeq F(G^0\coprod_{k\coprod k}G_{x,y}[\cancel{\alpha}])\simeq F(G[\cancel{\alpha}])$$ 
and 
$$F(G^0)\times_{F(k)\times F(k)}	F(G_{x,y})\simeq F(G^0\coprod_{k\coprod k}G_{x,y})\simeq F(G),$$ 
 so we have our condition 3 for all $G$ free dg-categories of finite type and we have finished our proof. 
\end{proof}
We now want to apply this sublemma to our homotopy colimit. We have already proven that $\hocolim(\Sing(T_i))$ satisfies conditions 1 and 2 of the dg-Segal conditions; we just need to prove that it also satisfies conditions 4 and 5 of the sublemma. Condition 5 is given by Lemma \ref{ch.2 representable colimit}, so let us check condition 4. Let $K, L$ be two free dg-categories of finite type. In a similar way than the proof of Proposition \ref{Prop: Sing is dg-Segal}, we have that 
$$\Sing(T_i)(K\coprod_{k\coprod k} L)\simeq \Sing(T_i)(K)\times_{\Sing(T_i)(k)^2}\Sing(T_i)(L).$$
But as long as the base of the finite product has a finite $\pi_0$, we have that the homotopy colimit commutes with it. Which means that
$$\hocolim(\Sing(T_i))(K\coprod_{k\coprod k} L)\simeq \hocolim(\Sing(T_i)(K)\times_{\Sing(T_i)(k)^2}\Sing(T_i)(L))\simeq$$
$$\simeq \hocolim(\Sing(T_i)(K))\times_{\hocolim(\Sing(k))^2}\hocolim(\Sing(T_i)(L)).$$
Hence the functor $\hocolim(\Sing(T_i))$ satisfies the condition 4 of the sublemma, and in consequence it satisfies the third condition of a dg-Segal space.

The functor $\hocolim(\Sing(T_i))$ satisfies all three dg-Segal conditions, and we have finished our proof.
\end{proof}

\begin{rem}We have proven here that if we have a functor with a representable dg-mapping space that satisfies the first two dg-Segal conditions and also that $F(L\coprod_{k\coprod k} K)\to F(L)\times_{F(k)\times F(k)} F(K)$, we have a dg-Segal space; and also we proved in Proposition \ref{Prop: dg-Segal are Segal} that if we have a dg-Segal space, then at the very least $F(\D^n)\simeq F(\D^1)\times_{F(k)}\cdots\times_{F(k)}F(\D^1)$. This resembles strongly the definition of a Segal space, but not precisely; indeed, we believe it is not possible to take out the representability condition on the last sublemma. As for the other implication, work done in this direction points to it not being possible to deduce condition 4 strictly from the dg-Segal conditions.
\end{rem}

Now that we know that $\hocolim(\Sing(T_i))$ is a dg-Segal space, we can see whether the induced morphism $\hocolim(\Sing(T_i))\to \Sing(T)$ is a DK-equivalence in $\dgS$.

\begin{lema}\label{Lem: hocolim DK}Let $T_*\to T$ be a $\Free$-hypercover of $T$ constructed as in Theorem \ref{Th: existence hypercovers} and Proposition \ref{Prop: free hyper}. Then the morphism $\hocolim(\Sing(T_i))\to \Sing(T)$ is a DK-equivalence.
\end{lema}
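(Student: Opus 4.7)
The plan is to verify the two defining conditions of a DK-equivalence for the morphism $f:\hocolim(\Sing(T_i))\to\Sing(T)$. The target $\Sing(T)$ is a complete dg-Segal space by Proposition \ref{complete dg-Segal in Sing}, and the source is a dg-Segal space by Lemma \ref{ch.2 hocolim dg-Segal}, so both sides have well-defined homotopy categories and dg-mapping spaces.

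For essential surjectivity of $\lfloor f\rfloor$, I would exploit the fact that the hypercover $T_*\to T$ constructed in Proposition \ref{Prop: free hyper} lives in $\dg_{\mathcal{O}}$ with $\mathcal{O}=\Ob(T)$. Thus every $T_i$ has exactly the same set of objects as $T$, and the augmentation induces a morphism $\Map(k,T_0)\to\Map(k,T)$ that is surjective on $\pi_0$ (every element of $\mathcal{O}$ already appears as a $0$-simplex in the source). The composition with $\Map(k,T_0)\to\hocolim\Map(k,T_i)$ then shows that $\pi_0(\hocolim(\Sing(T_i))(k))\to\pi_0(\Sing(T)(k))$ is surjective, which gives essential surjectivity of $\lfloor f\rfloor$ on objects (it is automatic that homotopy inverses in the target lift, since essential surjectivity in the DK sense only requires surjectivity up to equivalence in $\lfloor G\rfloor$).

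For the dg-mapping space condition, fix $x,y\in\pi_0(\hocolim(\Sing(T_i))(k))$; I need to prove that the induced map of functors $\Ch^{c,op}\to\sS$
$$\hocolim(\Sing(T_i))_{(x,y)}\longrightarrow \Sing(T)_{(x,y)}$$
is a weak equivalence. The right-hand side is representable by $T(x,y)$, and by Lemma \ref{ch.2 representable colimit} the left-hand side is also representable. Granting the interchange $\hocolim(\Sing(T_i))_{(x,y)}\simeq\hocolim\bigl(\Sing(T_i)_{(x,y)}\bigr)$ (see below), its representing object is identified with $\hocolim T_i(x,y)$, which by Lemma \ref{Lem: hyper complexes} is weakly equivalent in $\Ch$ to $T(x,y)$. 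The Yoneda lemma for representable presheaves then promotes this to a weak equivalence of the associated mapping functors, and in particular a quasi-equivalence in the sense required by the definition of DK-equivalence.

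The one technical point deserving care, and the main obstacle to making the argument watertight, is the interchange of $\hocolim_{\Delta\op}$ with the homotopy fiber defining $F_{(x,y)}$. To justify this, I would observe that for any perfect complex $E$, the space $\Map(E_{x,y},T_i)$ decomposes over $\pi_0(\Map(k,T_i))^2$, and the fiber over the component $(x,y)$ is precisely $\Map(E,T_i(x,y))$. Because $\Ob(T_i)=\mathcal{O}$ is constant in $i$, the indexing set of connected components is locally constant along the simplicial diagram, so the homotopy fiber over the fixed component $(x,y)$ commutes with $\hocolim_{\Delta\op}$. Extending from perfect complexes to arbitrary $C\in\Ch^c$ via the filtered colimit presentation used in the definition of $F_{(x,y)}$, and invoking the computation from the proof of Lemma \ref{ch.2 representable colimit}, concludes the proof.
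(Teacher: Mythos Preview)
Your proposal is correct and follows essentially the same two-step verification as the paper: essential surjectivity from the fixed-objects construction of the hypercover, and the dg-mapping space condition via Lemma~\ref{ch.2 representable colimit}. Two small remarks: first, the paper's proof of Lemma~\ref{ch.2 representable colimit} already identifies the representing object of $\hocolim(\Sing(T_i)_{(x,y)})$ as $T(x,y)$ directly (via the simplicial-set hypercover argument), so your detour through Lemma~\ref{Lem: hyper complexes} is redundant though not incorrect; second, your explicit discussion of the interchange $\hocolim(\Sing(T_i))_{(x,y)}\simeq\hocolim(\Sing(T_i)_{(x,y)})$ is a point the paper simply elides, and your justification via the constancy of $\mathcal{O}$ is a welcome addition.
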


\begin{proof}
In order to prove this, we need to make sure this morphism satisfies both conditions of the definition. The first is easy: indeed, we have made sure by the construction of our hypercover that all terms of the hypercover have the same elements. So $\l \hocolim(\Sing(T_i))\r\to \l \Sing(T)\r$ is essentially surjective.

As for the condition on the morphisms, we have done all the work already. We know that for all $(x,y)\in\pi_0(\Sing(T)(k))$, $\Sing(T)_{(x,y)}(E)=\Map(E,T(x,y))$. So we can rewrite this condition as being whether
$$\hocolim(\Map(-,T_i(x,y))\simeq \Map(-,T(x,y)). $$

But this condition is whether $\hocolim(\Sing(T_i)_{(x,y)})$ is a representable functor. By Lemma \ref{ch.2 representable colimit}, that is true.

The second condition is fulfilled and the morphism $\hocolim(\Sing(T_i))\to \Sing(T)$ is a DK-equivalence. 
%But by Lemma \ref{ch.2: hyper complexes}, we have that
%$$ \hocolim(\Map(-,T_i(x,y))\simeq \Map(-, \hocolim(T_i(x,y)))\simeq \Map(-, T(x,y)).$$
%So the second condition is fulfilled and the morphism $\hocolim(\Sing(T_i))\to \Sing(T)$ is a DK-equivalence. 
\end{proof}

But, the reader might be saying, we constructed our hypercovers over free dg-categories; and the results we have gotten about $\Sing$ being fully faithful only affect free dg-categories of finite type! And the reader would be right. But that is not a big issue: indeed, we can write every free dg-category as a filtered colimit of free categories of finite type.

\begin{lema}Let $G\in Gr(\Ch)$ be a graph on cochain complexes. Then there exists a filtered diagram $\phi: I\to Gr(\Ch)$ such that the image of $\phi$ is contained in $Gr(\Ch)^{tf}$ the graphs of finite type and such that $\colim_I \phi\simeq G$.
\end{lema}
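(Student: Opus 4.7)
The plan is to combine two well-known filtered decompositions: writing the vertex set as the filtered union of its finite subsets, and writing each edge complex as a filtered colimit of perfect complexes mapping into it. We will then glue these into a single filtered diagram of graphs of finite type whose colimit is $G$.

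First, I would recall that any set is the filtered colimit of its finite subsets under inclusion, and that for each pair $(x,y) \in \Ob(G)^2$ the complex $G(x,y)$ is a filtered colimit $G(x,y) \simeq \colim_{j \in J_{xy}} E^j_{xy}$ of perfect complexes, because $\Ch$ is accessible and the perfect complexes are precisely the compact objects (as recalled just before the definition of the dg-mapping space). I would then define $I$ to be the category whose objects are pairs $(S, \{E_{xy} \to G(x,y)\}_{x,y \in S})$, where $S \subset \Ob(G)$ is a finite subset and, for each pair $(x,y) \in S^2$, the map $E_{xy} \to G(x,y)$ is chosen in the filtered diagram $J_{xy}$. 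A morphism $(S, \{E_{xy}\}) \to (S', \{E'_{x'y'}\})$ is an inclusion $S \hookrightarrow S'$ together with, for each $(x,y) \in S^2$, a morphism $E_{xy} \to E'_{xy}$ in $J_{xy}$ over $G(x,y)$.

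Next I would check that $I$ is filtered. Given two objects $(S_1, \{E^1\})$ and $(S_2, \{E^2\})$, one takes $S = S_1 \cup S_2$ (still finite) and, for each pair $(x,y) \in S^2$, chooses any common refinement of the perfect sub-complexes indexed by $(x,y)$ in $J_{xy}$, which exists because $J_{xy}$ is filtered; pairs $(x,y)$ not in the support of either object are handled by choosing any perfect complex mapping to $G(x,y)$. The parallel-arrows condition is checked similarly. One then defines $\phi \colon I \to Gr(\Ch)^{tf}$ by sending $(S, \{E_{xy}\})$ to the graph with vertex set $S$ and edge complexes $E_{xy}$; each value is of finite type because $S$ is finite and the $E_{xy}$ are perfect.

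Finally, I would verify that $\colim_I \phi \simeq G$ in $Gr(\Ch)$. Filtered colimits in $Gr(\Ch)$ are computed by taking the filtered colimit of the vertex sets, and for each pair of vertices in the colimit, the filtered colimit of the corresponding edges over the cofinal subcategory of indices in which those vertices appear. On vertices this produces $\colim_{S} S = \Ob(G)$, and on edges, for fixed $(x,y) \in \Ob(G)^2$, the full subcategory of $I$ consisting of objects with $x, y \in S$ is cofinal and its projection to $J_{xy}$ is also cofinal, giving $\colim E_{xy} \simeq G(x,y)$. The main subtlety is precisely this cofinality check in $Gr(\Ch)$, where colimits are not simply pointwise on the edges (since the vertex set varies), but this is exactly where the condition that each pair of vertices eventually lies in every sufficiently large $S$ is used, and it follows directly from the filteredness of the finite subsets of $\Ob(G)$.
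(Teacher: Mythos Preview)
Your proposal is correct and follows essentially the same approach as the paper: build the indexing category out of pairs (finite subset of objects, choice of perfect complex for each edge) and take the evident colimit. In fact your version is more careful than the paper's, which simply declares the filteredness of $I$ and the identification $\colim_I G_\beta \simeq G$ to be ``straightforward'' without spelling out the cofinality argument you give.
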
 

\begin{proof}
First of all, as perfect complexes are the compact objects in the category of cochain complexes, we have that for all $x,y\in \Ob(G)$, there exists a filtered colimit as follows
$$\Hom_G(x,y)\simeq \colim \Hom(x,y)_\alpha, $$
where $\Hom(x,y)_\alpha$ are all perfect complexes.\par
Now, let us construct the filtered category $I$ as follows.
\begin{itemize}
	\item The objects of $I$ are finite graphs $G_\beta$ of the following form:
	\begin{itemize}
		\item[$\bullet$] Its objects $\Ob(G_\beta)$ are finite subsets of $\Ob(G)$.
		\item[$\bullet$] Let $x,y\in\Ob(G_\beta)$ be two objects of $G_\beta$. Then we define the edge between  $x$ and $y$ to be $\Hom(x,y)_\alpha$ for some $\alpha$ in the filtered colimit.
	\end{itemize}	 
	\item Let $G_{\beta_1}$ and $G_{\beta_2}$ be two objects in $I$. Then a morphism $f:G_{\beta_1}\to G_{\beta_2}$ is defined as follows:
	\begin{itemize}
		\item[$\bullet$] It is the set inclusion on objects.
		\item[$\bullet$] Let $x,y\in\Ob(G_{\beta_1})$. Then the morphism on the edge $x,y$ is given by the map on the filtered category $\Hom(x,y)_{\alpha,\beta_1}\to\Hom(x,y)_{\alpha,\beta_2}$. 
	\end{itemize}
\end{itemize}
It is straightfoward to prove that this filtered category defines a filtered diagram $\phi:I\to Gr(\Ch)$ such that $G\simeq \colim_I G_\beta$, where all $G_\beta$ are graphs of finite type. We have finished our proof.
\end{proof}

Finally, we have enough information to prove the full faithfullness of our functor $\Sing$.

\begin{teo}\label{Th: fully faithfulness} Assuming Hypothesis \ref{Hyp: Re-zk} to be true, for all $T\in\dg$ we have $\Re(\Sing)(T)\simeq T$, and the functor $\Sing$ is fully faithful.
\end{teo}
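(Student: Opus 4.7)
The plan is to prove the theorem by establishing that the counit $\epsilon_T\colon\Re\Sing(T)\to T$ is a weak equivalence in $\dg$ for every $T\in\dg$; the full faithfulness of $\Sing$ then follows formally from the adjunction and the Yoneda lemma, since one obtains
\begin{equation*}
\Hom_{\Ho(\dgSc)}(\Sing(T),\Sing(T'))\simeq\Hom_{\Ho(\dg)}(\Re\Sing(T),T')\simeq\Hom_{\Ho(\dg)}(T,T').
\end{equation*}
I would structure the argument as three reductions of increasing generality, peeling off one source of complexity at a time.

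\emph{Base case: finite type free dg-categories.} Proposition \ref{Prop: fully faithful for free} gives the full faithfulness of $\Sing^{c\Free}\colon\Ho(c\Free)\to\Ho(\Fun^\S(\FreeS\op,\sS))$; via the adjunction and the Yoneda lemma in $\Ho(\dg)$, this is equivalent to the counit $\Re\Sing(K)\to K$ being a weak equivalence for every $K\in c\Free$.

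\emph{Arbitrary free dg-categories.} Given a general free dg-category $L=\mathcal{L}(G)$, the preceding lemma writes the graph $G$ as a filtered colimit of graphs of finite type, so $L\simeq\colim_j L_j$ with each $L_j\in c\Free$ (since $\mathcal{L}$ is a left adjoint). The plan is then to argue, using the compactness of finite type free dg-categories, that $\Sing$ sends this filtered colimit to a filtered homotopy colimit in $\dgSc$. Since $\Re$ preserves homotopy colimits as a left adjoint, one concludes
\begin{equation*}
\Re\Sing(L)\simeq\hocolim_j\Re\Sing(L_j)\simeq\hocolim_j L_j\simeq L,
\end{equation*}
where the middle step invokes the base case for each $L_j$.

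\emph{General dg-categories via hypercovers.} Finally, for arbitrary $T\in\dg$, Proposition \ref{Prop: free hyper} supplies a $\Free$-hypercover $T_*\to T$ with $T_i\in\Free_\O$, and Proposition \ref{Prop: free colimitant} gives $\hocolim_i T_i\simeq T$ in $\dg$. Lemma \ref{ch.2 hocolim dg-Segal} ensures $\hocolim_i\Sing(T_i)$ is a dg-Segal space, and Lemma \ref{Lem: hocolim DK} that the canonical map $\hocolim_i\Sing(T_i)\to\Sing(T)$ is a DK-equivalence; invoking Hypothesis \ref{Hyp: Re-zk} upgrades it to a weak equivalence in $\dgSc$. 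Since $\Re$ preserves homotopy colimits, one then gets
\begin{equation*}
\Re\Sing(T)\simeq\Re(\hocolim_i\Sing(T_i))\simeq\hocolim_i\Re\Sing(T_i)\simeq\hocolim_i T_i\simeq T,
\end{equation*}
where the third identification uses the previous step applied to each $T_i\in\Free_\O$.

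\emph{Main obstacle.} The most delicate point is the middle reduction: although $\Sing$ is a right adjoint and therefore does not automatically preserve filtered colimits, one must verify that it does so up to weak equivalence in $\dgSc$ when evaluated on a filtered system of finite type free dg-categories. This hinges on the homotopical compactness of objects in $c\Free$ together with the compatibility of the Bousfield localization defining $\dgSc$ with filtered homotopy colimits. Once this technical compatibility is in place, the three-step reduction assembles directly into the desired weak equivalence $\Re\Sing(T)\simeq T$.
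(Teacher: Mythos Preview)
Your proposal is correct and follows essentially the same three-step structure as the paper's proof: the finite type case via Proposition \ref{Prop: fully faithful for free}, the extension to arbitrary free dg-categories via filtered colimits, and the general case via $\Free$-hypercovers and Hypothesis \ref{Hyp: Re-zk}. The paper packages all of this into a single commutative diagram rather than separating the reductions, but the ingredients and logical flow are identical.

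One remark worth making: the ``main obstacle'' you identify --- that $\Sing$ should send the filtered colimit $T_i\simeq\colim_\beta L(G_\beta)$ to a filtered homotopy colimit --- is precisely the step the paper invokes without further comment when it asserts that $\Re(\Sing(\colim L(G_\beta)))\simeq\colim L(G_\beta)$ follows from $\Re\Sing(L(G_\beta))\simeq L(G_\beta)$. So your caution is well placed; the paper simply takes this compactness property for granted (justifiably, since objects of $c\Free$ are homotopically finitely presented in $\dg$), whereas you have been explicit about it.
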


\begin{proof}
Let $T$ be a dg-category. We construct, with the aforementioned methods, a $\Free$-hypercover of $T$, $T_i\to T$. By Lemma \ref{Lem: hocolim DK}, we know that the image by $\Sing$ of this morphism is a DK-equivalence. We then have the following diagram
	\begin{center}
		\begin{tikzcd}
			\Re(\hocolim(\Sing(T_i))\ar[rr,"f"]\ar[d,"g"]&& \Re(\Sing(T))\\
			\hocolim(\Re(\Sing(T_i)))\ar[d,"\simeq"]&& \\
			\hocolim(\Re(\Sing(\colim(L(G_\beta))))\ar[d,"\phi"]&&\\
			\hocolim(\colim(L(G_\beta)))\simeq\hocolim(T_i)\ar[uuurr, "\zeta"]& &
		\end{tikzcd}
	\end{center}	
Assuming Hypothesis \ref{Hyp: Re-zk} to be true, we know that the morphism $\hocolim(\Sing(T_i))\to \Sing(T)$ is a weak equivalence for the complete dg-Segal model structure. So $f$ is a weak equivalence on the homotopy categories. On the other hand, because $\Re$ is a left hand adjoint, we know it commutes with the homotopy colimit, so $g$ is also a weak equivalence. Lastly, we have proven in Proposition \ref{Prop: fully faithful for free} that $\Sing$ is fully faithful over free dg-categories of finite type, and as all $L(G_\beta)$ are free dg-categories of finite type by construction, we have that $\Re(\Sing(L(G_\beta)))\simeq L(G_\beta)$ for all $L(G_\beta)$ and the morphism $\phi$ is also a weak equivalence. By the two out of three condition on model categories, that means that the morphism $\zeta$ is also a weak equivalence.

That gives us the following diagram:
	\begin{center}
		\begin{tikzcd}
			\hocolim(T_i)\ar[r,"\zeta"]\ar[d,"~"]&\Re(\Sing(T_i))\ar[dl]\\
			T
		\end{tikzcd}
	\end{center}	
The vertical morphism is a weak equivalence by Proposition \ref{Prop: free colimitant}, and $\zeta$ is a weak equivalence too. By the two out of three condition, we have that $\Re(\Sing(T))\to T$ is a weak equivalence. The functor $\Sing$ is fully faithful and we have finished out proof.
\end{proof}

We finally know the functor is fully faithful! All we have left is to prove that it is essentially surjective.

\subsection{The functor $Sing$ is essentially surjective}

Let us prove that every functor that satisfies the dg-Segal conditions is isomorphic to an object of the form $\Sing(T)$. In order to do that, we are going to use hypercovers again: for all functor $F$ that satisfies the dg-Segal conditions, we will define a concept of hypercovers of a dg-Segal space. Even though the definitions are different, the constructions on hypercovers are very similar to the ones in Section 4. Once we have that, we will construct for all $F$ a hypercover made of functors of the form $\Sing(T_i)$ with $T_i$ a free dg-category. Lastly, we will construct the homotopy colimit $T$ of those free dg-categories, and prove that there always exists a DK-equivalence between the image by $\Sing$ of $T$ and $F$. Once that is done, the hypothesis gives us the essential surjectivity.

\begin{defin}Let $F, G\in\dgS$. We say that a morphism $f:F\to G$ is a \textbf{dg-Segal epimorphism} if we have the following two conditions:
\begin{enumerate}
	\item The morphism $f$ is an isomorphism on the objects, $\pi_0(F(k))\to \pi_0(G(k))$.
	\item For all $x,y,\in\pi_0(F(k))$, the induced morphism $F_{(x,y)}\to G_{(x,y)}$ is a split epimorphism.
\end{enumerate}
\end{defin}

Now that we have our definition of what epimorphism we want, we can define our hypercover. It isn't complicated: we will essentially use the same definition we used last time, utilizing dg-Segal epimorphisms instead of $M_0$-epimorphisms.

\begin{defin}Let $F\in\dgS$ a functor that satisfies the dg-Segal conditions, and $F_*\to F$ an augmented simplicial object in $\dgS$. We say that $F_*$ is \textbf{a dg-Segal-hypercover of $F$} if for all $n\in\N$ the functor 
$$F_n\to F_*^{\partial\Delta^n}$$
is a dg-Segal epimorphism. In other words, $F_*\to F$ is a dg-Segal-hypercover if 
$$F_0\to F $$
is a dg-Segal epimorphism and for all $n\geq 1$
$$F_n\to (\R\cosk_{n-1}\sk_{n-1}F)_{n} $$
is a dg-Segal epimorphism.
\end{defin}

As in section 4, we could prove that for every $F\in\dgS$, there exists a free dg-category $T$ such that $\Sing(T)\to F$ is a dg-Segal epimorphism. While that result is undoubtedly true, utilizing it would later make the objects in our hypercover explode, and we do not want that. So we are going to fix the objects first.

\begin{nota}We denote the functor $\Sing(k)\in\dgS$ by $\underline{k}$. 
\end{nota}

\begin{defin}Let $\O$ be a set. We define \textbf{the category of dg-Segal spaces with fixed objects over $\O$} to be the full subcategory of $\coprod_\O\underline{k}/\dgS$ of $F$ dg-Segal spaces such that the morphism $\O\to \pi_0(F(k))$ is an isomorphism, and we denote it by $\dgS_\O$.
\end{defin}

\begin{defin}Let $\Phi:\dgS_\O\to \dgS$ be the forgetful functor. We define a dg-Segal epimorphism on $\dgS_\O$ to be a morphism $f$ in $\dgS_\O$ such that $\Phi(f)$ is a dg-Segal epimorphism.
\end{defin}

\begin{rem}It is easy to see that the first condition of the dg-Segal epimorphism is always true in $\dgS_\O$. Consequently, we won't have to check that condition as long as we are working on fixed objects.
\end{rem}

\begin{lema}\label{Lem: existence dg-epi} Let $F\in\dgS$ be a functor that satisfies the Segal conditions. We fix a set $\O=\pi_0(F(k))$. There exists a free dg-category with fixed objects $T\in\dg_\O$ such that $\Sing(T)\to F$ is a dg-Segal epimorphism in $\dgS_\O$. 
\end{lema}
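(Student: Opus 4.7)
The plan is to construct $T$ as a free dg-category on a graph $G$ whose edges are precisely the representing complexes of the dg-mapping spaces of $F$. Using the representability result proved just above the lemma, for each pair $(x,y) \in \O^2$ I get a cochain complex $F(x,y) \in \Ch$ with $F_{(x,y)}(-) \simeq \Map_{\Ch}(-, F(x,y))$. I then define the graph $G \in Gr(\Ch)_\O$ by $G(x,y) := F(x,y)$ and take $T := L(G) \in \Free_\O$. Note that $G$ need not be of finite type, but the lemma only requires a free dg-category in $\dg_\O$, so this is allowed.

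To produce the morphism $\alpha\colon\Sing(T) \to F$, I would use the simplicial Yoneda lemma combined with the dg-Segal conditions. Writing $G = \colim_i G_i$ as a filtered colimit of finite-type subgraphs (as in the lemma preceding Theorem \ref{Th: fully faithfulness}), one has $\Sing(T) \simeq \colim_i \Sing(L(G_i))$ in $\Fun^\S(\FreeS^{op}, \sS)$, and hence $\Map(\Sing(T), F) \simeq \lim_i F(L(G_i))$. Iterating the dg-Segal conditions (in particular the pushout condition and condition 4 of Sublemma \ref{SLem: equivalent condition on dg-Segal}) and using the representability of $F_{(x,y)}$, one identifies the $\O$-fiber of each $F(L(G_i))$ with $\prod_{(x,y)} \Map_{\Ch}(G_i(x,y), F(x,y))$. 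Passing to the limit yields $\prod_{(x,y)} \Map_{\Ch}(F(x,y), F(x,y))$, in which the tuple of identity maps gives the canonical element defining $\alpha$.

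To verify that $\alpha$ is a dg-Segal epimorphism, the object-level isomorphism is automatic in $\dgS_\O$. For the splitting on dg-mapping spaces, the induced morphism $\Sing(T)_{(x,y)} \to F_{(x,y)}$ is a map of representable functors in $\Fun(\Ch^{c,op}, \sS)$, so by Yoneda it is determined by a map $L(G)(x,y) \to F(x,y)$ in $\Ch$. Unwinding the construction, this is the natural projection of
$$L(G)(x,y) = \bigoplus_{m\in\N}\bigoplus_{(x_1,\ldots,x_m)} G(x,x_1)\otimes\cdots\otimes G(x_m,y)$$
onto its length-one summand $G(x,y) = F(x,y)$. The corresponding direct summand inclusion $F(x,y) = G(x,y) \hookrightarrow L(G)(x,y)$ is an explicit section, and by Yoneda it induces a section of $\Sing(T)_{(x,y)} \to F_{(x,y)}$ as functors, so this morphism is split epi.

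The main obstacle is the Segal-type decomposition $F(L(G_i))_\O \simeq \prod_{(x,y)} \Map_{\Ch}(G_i(x,y), F(x,y))$ for $G_i$ a finite-type graph on $\O$. This must be obtained by induction on the number of edges of $G_i$: in the base case, a graph with a single edge $E$ between $x$ and $y$ gives $F(L(E_{x,y}))_{(x,y)} \simeq F_{(x,y)}(E) \simeq \Map_{\Ch}(E, F(x,y))$ by definition of the dg-mapping space and representability; the inductive step uses the pushout dg-Segal condition to add an edge while condition 4 of Sublemma \ref{SLem: equivalent condition on dg-Segal} handles adding an object. Carrying out this induction carefully, and identifying the resulting canonical map with the direct-summand projection, is where the technical bookkeeping lies.
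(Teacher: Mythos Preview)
Your approach is essentially identical to the paper's: the same graph $G$ with $G(x,y)=F(x,y)$, the same free dg-category $T=L(G)$, and the same splitting argument via the direct-summand inclusion $F(x,y)=G(x,y)\hookrightarrow L(G)(x,y)$. The paper's proof is in fact terser than yours: it simply asserts the existence of the morphism $\Sing(T)\to F$ without constructing it, whereas you take care to build $\alpha$ explicitly via the filtered-colimit presentation and the Segal decomposition of $F(L(G_i))$. Your extra bookkeeping is sound and fills a gap the paper leaves implicit.
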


\begin{proof}

Most of the work for the construction of this dg-category has already been done, and we only have to put it together. We define a graph $G$ as follows:
\begin{itemize}
	\item $\Ob(G)=\O$.
	\item For all $x,y\in\O$, we have that $G(x,y)=F(x,y)$, the representing object of $F_{(x,y)}$.
\end{itemize}

We define then $T=L(G)$ as being the free category constructed from $G$.

Let us now prove that the morphism $\Sing(T)\to F$ is a dg-Segal epimorphism in $\dgS_\O$, or equivalently, that its projection on $\dgS$ is a dg-Segal epimorphism.

By the construction of $T=L(G)$, we have that for all $x,y\in\O$, 
$$L(G)(x,y)=\bigoplus_{m\in\N}\bigoplus_{(x_1,x_2, \ldots, x_m)\in\O^m}(G(x,x_1)\otimes\cdots \otimes G(x_m,y)), $$
and in particular, $G(x,y)=F(x,y)$ is a factor in $T(x,y)$. That means that for all $E\in\Ch$ there exists an inclusion $\Map(E,F(x,y))=F_{(x,y)}(E)\to\Map(E,T(x,y))$. It is easy to see, using the definition of homotopy fiber, that for all dg-category $T'$, $\Sing(T')_{(x,y)}=\Map(-,T'(x,y))$. Putting it all together, we have then that the morphism $\Sing(T)_{(x,y)}=\Map(-,T(x,y))\to F_{(x,y)}=\Map(-, F(x,y))$ is a split epimorphism.

The morphism $\Sing(T)\to F$ is a dg-Segal epimorphism and we have finished.
\end{proof}

Let us tackle now the hypercover result. 

\begin{teo}\label{Th: dg-Segal hypercover}Let $F\in\dgS$ be a functor that satisfies the dg-Segal conditions and let $\O=\pi_0(F(k))$. Then there exists a simplicial object $T_*$ in $\dg_\O$ such that $F_*=\Sing(T_*)$ and a morphism $F_*\to F$ such that $F_*\to F$ is a dg-Segal hypercover in $\dgS_\O$.  
\end{teo}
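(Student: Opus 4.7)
The plan is to adapt the inductive construction of Theorem \ref{Th: existence hypercovers} and Proposition \ref{Prop: free hyper} to the setting of $\dgS_\O$, with dg-Segal epimorphisms playing the role of $M_0$-epimorphisms, and the full subcategory of $F \simeq \Sing(T)$ for $T \in \dg_\O$ a free dg-category with objects $\O$ playing the role of $\Free_\O$.

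First I would establish three preparatory facts that are the analogues of the three hypotheses of Theorem \ref{Th: existence hypercovers}. (a) The full subcategory of $\dgS_\O$ consisting of images $\Sing(T)$ with $T \in \dg_\O$ free is closed under finite coproducts in $\dgS_\O$: free dg-categories in $\dg_\O$ are closed under the relative coproduct over $\coprod_\O k$ (since they are generated by coproducts of graphs), and Proposition \ref{Prop: Sing is dg-Segal} together with the first and third dg-Segal conditions forces $\Sing(T_1)\coprod_{\underline{k}_\O}\Sing(T_2)\simeq\Sing(T_1\coprod_{\coprod_\O k}T_2)$ in $\dgS_\O$. (b) For every $G\in\dgS_\O$ there exists a free $T\in\dg_\O$ together with a dg-Segal epimorphism $\Sing(T)\to G$: this is Lemma \ref{Lem: existence dg-epi}. (c) Matching objects of simplicial diagrams whose terms are dg-Segal spaces are again dg-Segal spaces, because the dg-Segal conditions are themselves homotopy limit conditions (products, the point, and homotopy pullbacks for each $G[\cancel{\alpha}]$), and homotopy limits commute with homotopy limits.

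The construction itself proceeds by induction on $n$, exactly along the lines of Theorem \ref{Th: existence hypercovers}. For $n=0$, I apply Lemma \ref{Lem: existence dg-epi} to $F$ itself to obtain a free $T_0\in\dg_\O$ and a dg-Segal epimorphism $\Sing(T_0)\to F$. For the inductive step, given an $n$-truncated dg-Segal hypercover $F_*^{\leq n}\to F$ with $F_i\simeq\Sing(T_i)$ and $T_i\in\dg_\O$ free, I form $V_*'=\sk_{n+1}(\cosk_n F_*^{\leq n})$; by preparation (c) the matching object $V'_{n+1}$ is a dg-Segal space, so preparation (b) produces a free $S_{n+1}\in\dg_\O$ and a dg-Segal epimorphism $\Sing(S_{n+1})\to V'_{n+1}$. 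I then set
$$F_{n+1}=\Sing(S_{n+1})\coprod_{\underline{k}_\O}\coprod_{i\leq n}F_i,$$
which by preparation (a) is of the form $\Sing(T_{n+1})$ for an explicit free $T_{n+1}\in\dg_\O$. The face and degeneracy maps are built as in the proof of Theorem \ref{Th: existence hypercovers}, and the verification that $F_{n+1}\to V'_{n+1}$ is a dg-Segal epimorphism follows verbatim from the argument there: the splitting of $\Sing(S_{n+1})\to V'_{n+1}$ and the universal property of the coproduct give, for each $W\in\dgS_\O$ of the form $\Sing(T)$ with $T$ free, a lift on $\pi_0$ at the level of the dg-mapping spaces.

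The main obstacle will be preparation (c): verifying that matching objects of dg-Segal spaces remain dg-Segal spaces. Conditions (1) and (2) of Definition \ref{Def: dg-Segal spaces} transfer across matching objects almost immediately, but condition (3) requires recognising the matching object as a homotopy limit in $\sS$ valued argumentwise and commuting this with the homotopy pullbacks defining the third dg-Segal condition. A secondary technical point is verifying preparation (a), that is, that the coproduct of $\Sing$-images of free dg-categories of objects $\O$ in $\dgS_\O$ is again the $\Sing$-image of the relative coproduct in $\dg_\O$; this follows by checking on generating graphs and exploiting that $\Sing(T)$ satisfies the dg-Segal conditions strictly (Proposition \ref{Prop: Sing is dg-Segal}).
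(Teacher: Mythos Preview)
Your approach is essentially the one the paper takes: an induction in the style of Theorem \ref{Th: existence hypercovers}, using Lemma \ref{Lem: existence dg-epi} as the source of dg-Segal epimorphisms at each stage. You are actually more careful than the paper on one point: your preparation (c), that the matching object $V'_{n+1}$ is again a dg-Segal space, is needed for Lemma \ref{Lem: existence dg-epi} to apply, and the paper simply asserts this implicitly.

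There is, however, a genuine omission. The statement asks for a simplicial object $T_*$ in $\dg_\O$ with $F_*=\Sing(T_*)$, and your construction only produces the individual objects $T_i$ together with a simplicial structure on the $F_i=\Sing(T_i)$ in $\dgS_\O$. The face maps $F_{n+1}\to F_m$ that you build ``as in Theorem \ref{Th: existence hypercovers}'' pass through the matching object $V'_{n+1}$, which is not of the form $\Sing(-)$; so a priori there is no reason the composite $\Sing(T_{n+1})\to V'_{n+1}\to \Sing(T_m)$ arises from a dg-functor $T_{n+1}\to T_m$. The paper closes this gap explicitly at the end of its proof by invoking Theorem \ref{Th: fully faithfulness}: since $\Sing$ is fully faithful, every morphism in the simplicial object $F_*$ lifts to $\dg_\O$, yielding the simplicial object $T_*$. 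You should add this step.

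A smaller comment on your preparation (a): the claim that $\Sing(T_1)\coprod_{\underline{k}_\O}\Sing(T_2)\simeq\Sing(T_1\coprod_{\coprod_\O k}T_2)$ does not follow just from the dg-Segal conditions, since $\Sing$ is a right adjoint and need not preserve pushouts. The paper asserts the same thing (``$\Sing$ commutes with finite coproducts'') without more justification; in practice one only needs that $\Sing(T_{n+1})\to V'_{n+1}$ is a dg-Segal epimorphism, which can be checked directly on dg-mapping spaces using that $A(x,y)$ is a direct summand of $T_{n+1}(x,y)=(A\coprod_{\coprod_\O k} T_i)(x,y)$.
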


\begin{proof}
Again, this construction is almost identical to that of Theorem \ref{Th: existence hypercovers}, and as such we won't be going into much detail. We will construct our hypercover by induction, by proving that for all $n\in\N$ there exists an $n$-truncated dg-Segal hypercover with fixed objects of $F$ where every level is in the image of $\Sing$. 
\begin{itemize}
	\item $n=0$ is true by Lemma \ref{Lem: existence dg-epi}. There exists a dg-category with fixed objects $T\in\dg_\O$ such that $\Sing(T)\to F$ is a dg-Segal epimorphism, and that creates a 0-truncated dg-Segal hypercover.
	\item $n\in\N$. By induction hypothesis there exists an $n$-truncated dg-Segal hypercover of $F$, named $\Sing(T_*)\to F$. Let us construct an $(n+1)$-truncated hypercover of $F$, that we will call $\Sing(T_*)\to F$ too. As the construction doesn't change the first $n$ terms, there is no ambiguity in the notation. 

We define $V_*=\sk_{n+1}(\cosk_n \Sing(T_*))$. This simplicial set is $(n+1)$-truncated, but the term $n+1$ is not necessarily in the image of $\Sing$. By Lemma \ref{Lem: existence dg-epi} again, there exists a free dg-category $A$ with objects $\O$ and a morphism $\Sing(A)\to V_{n+1}$ that is a dg-Segal epimorphism. We define the $(n+1)$-truncated dg-Segal hypercover to be $\Sing(T_i)$ for all $i\leq n$ and 
$$\Sing(T_{n+1})=\Sing(A)\coprod \Sing(T_i)=\Sing(A\coprod T_i) $$
for $n+1$. This is possible because $\Sing$ commutes with finite coproducts.

With the same morphisms as in Theorem \ref{Th: existence hypercovers}, we have a simplicial object in $\dgS_\O$. We now just have to prove that for all $i\leq n+1$, $\Sing(T_i)\to \Sing(T_*)^{\partial\Delta^i}$ is a dg-Segal epimorphism, which by construction gets instantly reduced to proving that 
$$ \Sing(T_{n+1})=\Sing(A)\coprod \Sing(T_i)\to V_{n+1}$$
is a dg-Segal epimorphism in $\dgS_\O$, or equivalently, that for all $x,y\in\O$, $\Sing(T_{n+1})_{(x,y)}=\Map(-, T_{n+1}(x,y))\to V_{n+1 (x,y)}$ is a split epimorphism.

\end{itemize}
We have created $F_*\to F$ a dg-Segal hypercover with fixed objects such that for all $n\in\N$, there exists a free dg-category $T_n$ such that $\Sing(T_n)=F_n$. Now, we have proven in Theorem \ref{Th: fully faithfulness} that $\Sing$ is a fully faithful functor. In consequence, every morphism in the simplicial object $F_*$ comes from a morphism in $\dg_\O$, and there exists a simplicial object $T_*\in\dg_\O$ such that $\Sing(T_*)=F_*$. We have finished our proof.
\end{proof}

Now that we have constructed our hypercover $F_*\to F$, the last thing we need is to prove that for all $F\in\dgS$ there exists a dg-category $T$ such that $F$ is DK-equivalent to $\Sing(T)$, and we have our perfect candidate to do so. 

\begin{nota}Let $F\in\dgS$ be a functor satisfying the dg-Segal conditions. Let $\Sing(T_*)=F_*\to F$ be a dg-Segal hypercover with fixed objects as constructed in Theorem \ref{Th: dg-Segal hypercover}. We denote by $T$ the homotopy colimit of the simplicial object $T_*$. In other words, we define
$$T=\hocolim(T_i). $$
\end{nota}

We need now to prove that $\Sing(T)\to F$ is a DK-equivalence. In order to do that, we will use two DK-equivalences that are easier to prove: $\hocolim(\Sing(T_i))\to \Sing(\hocolim(T_i))=\Sing(T)$ and $\hocolim(\Sing(T_i))\to F$.

\begin{lema}\label{Lem: dg-Segal to complex hyper}Let $F_*\to F$ be a dg-Segal hypercover in $\dgS_\O$ with $\O=\pi_0(F(k))$ constructed as before. Then, for all $x,y\in\O$, the augmented simplicial complex $T_*(x,y)\to F(x,y)$ is a split hypercover in $\Ch$.
\end{lema}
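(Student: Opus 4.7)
The plan is to unpack the dg-Segal hypercover condition level by level and then use the Yoneda lemma to transfer split epimorphisms of representable simplicial presheaves on $\Ch^{perf}$ to split epimorphisms of their representing complexes.

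First I would fix $x,y\in\O$ and observe that, since each $F_n=\Sing(T_n)$ for a free dg-category $T_n\in\dg_\O$, the dg-mapping space $(F_n)_{(x,y)}$ is representable by $T_n(x,y)$; concretely $(F_n)_{(x,y)}(E)\simeq\Map(E,T_n(x,y))$, as already noted in the remark following the representability proposition. Next, because $F\mapsto F_{(x,y)}$ is defined via a homotopy fibre and therefore commutes with all (homotopy) limits, it commutes in particular with the matching object $(-)^{\partial\Delta^n}$. Combined with Yoneda again on the target, this gives
\[(F_*^{\partial\Delta^n})_{(x,y)}(E)\simeq \Map(E,T_*(x,y)^{\partial\Delta^n}),\]
where now the matching object on the right is computed in $\Ch$.

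Second, by definition of a dg-Segal hypercover, the morphism $F_n\to F_*^{\partial\Delta^n}$ is a dg-Segal epimorphism, so the induced natural transformation $(F_n)_{(x,y)}\to (F_*^{\partial\Delta^n})_{(x,y)}$ is a split epimorphism of simplicial presheaves on $\Ch^{perf}$. Under the identifications of the previous step this becomes a split epi $\Map(-,T_n(x,y))\to\Map(-,T_*(x,y)^{\partial\Delta^n})$; evaluating a chosen section on the object $T_*(x,y)^{\partial\Delta^n}$ and tracking the identity, the Yoneda lemma produces a morphism $T_*(x,y)^{\partial\Delta^n}\to T_n(x,y)$ which splits the canonical map. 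Hence $T_n(x,y)\to T_*(x,y)^{\partial\Delta^n}$ is a split epi in $\Ch$. The augmentation $T_0(x,y)\to F(x,y)$ is treated identically, using the $n=0$ case of the hypercover condition together with the representability of $F_{(x,y)}$ established earlier. Assembling these statements for all $n\in\N$ yields precisely the split hypercover condition on $T_*(x,y)\to F(x,y)$.

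The main obstacle I expect is the bookkeeping in the first paragraph: one must justify that the construction $F\mapsto F_{(x,y)}$ commutes with the (homotopy) matching object $(-)^{\partial\Delta^n}$ applied to the simplicial dg-Segal space $F_*$. This amounts to checking that the defining homotopy fibre of $F_{(x,y)}$, being a finite homotopy limit over a fixed diagram, commutes with the limit computing $F_*^{\partial\Delta^n}$, and that the identification $F_{(x,y)}(E)\simeq\Map(E,T(x,y))$ for $F=\Sing(T)$ is natural enough in $T$ to be compatible with the matching object in $\dg_\O$. Once this compatibility is in hand, the rest is a direct application of the Yoneda lemma and the definition of a dg-Segal epimorphism.
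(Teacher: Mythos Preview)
Your approach is correct and is precisely the unpacking of what the paper means: its entire proof reads ``This follows from the definition of dg-Segal hypercover.'' You have supplied the details behind that sentence---identifying $(F_n)_{(x,y)}$ with $\Map(-,T_n(x,y))$, commuting the fibre-at-$(x,y)$ construction with the matching object, and using Yoneda to pass from a split epimorphism of representable presheaves to a split epimorphism of representing complexes---and you have correctly flagged the commutation with $(-)^{\partial\Delta^n}$ as the only point requiring care.

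One small remark: your Yoneda step, taken abstractly, produces a section only up to homotopy, since the representability $F_{(x,y)}\simeq\Map(-,F(x,y))$ is itself only up to weak equivalence. In the paper's specific construction (Lemma~\ref{Lem: existence dg-epi} and Theorem~\ref{Th: dg-Segal hypercover}) the splits are strict, because at each stage the graph underlying $T_n$ is built with $G(x,y)$ equal to the representing complex of the target, so that complex sits as a literal direct summand of $T_n(x,y)$. This is presumably why the lemma is stated for the hypercover ``constructed as before''. For the applications (Lemma~\ref{Lem: hyper complexes} and Proposition~\ref{Prop: fixed objects dg-Segal}) a split up to homotopy would in fact suffice, so your argument is adequate either way.
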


\begin{proof}
This follows from the definition of dg-Segal hypercover.
\end{proof}

\begin{prop}\label{Prop: fixed objects dg-Segal}Let $F_*\to F$ be a dg-Segal hypercover constructed as in Theorem \ref{Th: dg-Segal hypercover}. Then the homotopy colimit of $F_*$ is DK-equivalent to $F$ in $\dgS_\O$.
\end{prop}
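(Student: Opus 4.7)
The plan is to mirror the strategy used to establish Lemma \ref{Lem: hocolim DK}, replacing the $\Free$-hypercover input with the dg-Segal hypercover constructed in Theorem \ref{Th: dg-Segal hypercover}. Since both $\hocolim F_*$ and $F$ live in $\dgS_\O$, the essential surjectivity of $\l \hocolim F_* \r \to \l F \r$ is automatic (the object sets are fixed at $\O$), so the entire content reduces to verifying condition 2 of the definition of a DK-equivalence: for every $x, y \in \O$, the induced morphism on dg-mapping spaces $(\hocolim F_*)_{(x,y)} \to F_{(x,y)}$ must be a quasi-equivalence in $\Fun(\Ch^{c, op}, \sS)$.

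First I would verify that $\hocolim F_*$ is itself a dg-Segal space, which is required for its dg-mapping space to be defined and representable. This is an exact replay of the argument in Lemma \ref{ch.2 hocolim dg-Segal}: each $F_n = \Sing(T_n)$ with $T_n \in \dg_\O$ a free dg-category, $\hocolim_{\Delta\op}$ commutes with finite products and with the initial object in $\sS$, and conditions 4 and 5 of Sublemma \ref{SLem: equivalent condition on dg-Segal} go through unchanged. Crucially, the representability input for condition 5 is available because Lemma \ref{Lem: dg-Segal to complex hyper} states precisely that $T_*(x,y) \to F(x,y)$ is a split hypercover in $\Ch$, which is exactly the property used in the proof of Lemma \ref{ch.2 representable colimit} to identify $\hocolim_n \Map(E, T_n(x,y)) \simeq \Map(E, F(x,y))$ for every $E \in \Ch^c$.

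The remaining step is to identify $(\hocolim F_*)_{(x,y)}$ with $\hocolim_n (F_n)_{(x,y)}$. For each $n$ one has $\Sing(T_n)(E_{x,y}) = \Map(E_{x,y}, T_n) = \coprod_{x', y' \in \O} \Map(E, T_n(x', y'))$, and the projection to $\Sing(T_n)(k)^2$ simply reads off the indexing pair. Because we work in $\dg_\O$, this decomposition over $\O \times \O$ is strictly preserved by every face and degeneracy of $T_*$, so extracting the $(x,y)$-component commutes with $\hocolim_{\Delta\op}$. Combining with the identification from the previous step yields $(\hocolim F_*)_{(x,y)}(E) \simeq \Map(E, F(x,y)) = F_{(x,y)}(E)$ as functors on $\Ch^c$, which is the required quasi-equivalence (in fact an equivalence). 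The main obstacle is precisely this commutation of the homotopy fiber over $(x,y)$ with the simplicial homotopy colimit; the entire fixed-object framework of $\dgS_\O$ has been set up so that the indexing set of the coproduct decomposition does not vary across simplicial degree, reducing the issue to a statement about compatible split epimorphisms already handled in the $\Free$-hypercover case.
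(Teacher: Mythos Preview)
Your proposal is correct and follows essentially the same approach as the paper: reduce to the dg-mapping space condition using fixed objects, invoke Lemma~\ref{Lem: dg-Segal to complex hyper} to get a split hypercover $T_*(x,y)\to F(x,y)$ in $\Ch$, and conclude via Proposition~\ref{Prop: hypercovers sSet} that $\hocolim_n \Map(-,T_n(x,y))\simeq \Map(-,F(x,y))$. You are in fact more careful than the paper on two points it leaves implicit---verifying that $\hocolim F_*$ is dg-Segal (so the dg-mapping space is defined and representable) and justifying the commutation $(\hocolim F_*)_{(x,y)}\simeq \hocolim_n (F_n)_{(x,y)}$ via the constant $\O\times\O$-indexed decomposition---both of which the paper's proof silently assumes.
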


\begin{proof}
The proof of this follows very closely the one in Lemma \ref{Lem: hocolim DK}. Indeed, in order to prove something is a DK-equivalence, we have two conditions: firstly, that the morphism
$$\l\hocolim(F_i) \r\to \l F\r $$
is an essentially surjective. But the hypercover has been constructed to have fixed objects $\pi_0(F(k))$, so this condition is verified by construction.

That leaves us with the second condition. We fix $x,y\in \O$. Do we have that 
$$(\hocolim F_{i})_{(x,y)}\to F_{(x,y)} $$
is a quasi-equivalence? By definition, $F_{i,(x,y)}=\Map(-,T_i(x,y))$ and $F_{(x,y)}=\Map(-,F(x,y))$, so we can rewrite this condition as wondering whether 
$$\hocolim \Map(-,T_i(x,y))\to \Map(-,F(x,y))$$
is a quasi-equivalence. By Lemma \ref{Lem: dg-Segal to complex hyper}, we know that $T_*(x,y)\to F(x,y)$ is a split hypercover of complexes, so for all $n\in\N$, $T_n(x,y)\to T_*^{\partial\Delta^n}(x,y)$ is a split epimorphism, and in consequence for all $E\in\Ch$ the morphism
$$\pi_0(\Map(E,T_n(x,y)))\to \pi_0(\Map(E,T_*(x,y)^{\partial\Delta^n}))\simeq \pi_0(\Map(E,T_*(x,y))^{\partial\Delta^n})  $$
is an epimorphism. That means that $\Map(E,T_*(x,y))\to \Map(E,F(x,y))$ is a hypercover of simplicial sets, and by Proposition \ref{Prop: hypercovers sSet}, we have a quasi-equivalence
$$\hocolim\Map(-,T_i(x,y))\simeq \Map(-,F(x,y)). $$
So the morphism $\hocolim(F_i)\to F$ is a DK-equivalence and we have finished our proof.
\end{proof}

Like in the case of the dg-categories, we have proved that the homotopy colimit is DK-equivalent to $F$, but only with fixed objects. But if we want to use it, we need to see that it is indeed true in $\dgS$.

\begin{prop}Let $F$ be a dg-Segal space and $F_*\to F$ a dg-Segal hypercover constructed as in Theorem \ref{Th: dg-Segal hypercover}. Then the homotopy colimit of $F_*$ is DK-equivalent to $F$ in $\dgS$. 
\end{prop}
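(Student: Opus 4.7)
The plan is to reduce to Proposition \ref{Prop: fixed objects dg-Segal} by a strategy parallel to the one used in Proposition \ref{Prop: free colimitant}, which lifted the analogous colimit statement from $\dg_\O$ to $\dg$. The first observation is that the two defining conditions of a DK-equivalence depend only on $\pi_0$ of the value at $k$ and on the dg-mapping spaces $F_{(x,y)}$, both of which are intrinsic data preserved by the forgetful functor $\Phi \colon \dgS_\O \to \dgS$. Hence a DK-equivalence computed in $\dgS_\O$ automatically remains one after forgetting, and the entire problem reduces to verifying that $\Phi$ commutes with the homotopy colimit $\hocolim_{\Delta\op} F_i$.

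To establish this commutation I would imitate the adjunction argument from the end of Proposition \ref{Prop: free colimitant}. Factor $\Phi$ as $\Phi = U \circ \Xi$, where $\Xi \colon \dgS_\O \to \coprod_\O \underline{k}/\dgS$ remembers the structural map $\coprod_\O \underline{k} \to F$ and $U$ is the evident forgetful functor; then construct a right adjoint $\Gamma$ to $\Xi$ by sending a pointed dg-Segal space $(G, \coprod_\O \underline{k} \to G)$ to the dg-Segal space with fixed object set $\O$ whose complex of morphisms from $\alpha$ to $\beta$ is the representing object $G(\alpha,\beta)$ of $G_{(\alpha,\beta)}$ (which exists by the representability proposition established earlier in the section). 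A direct check gives $\Gamma \Xi = \Id_{\dgS_\O}$, so $\Xi$ is fully faithful.

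Given the full faithfulness of $\Xi$, the four-term diagram chase of Proposition \ref{Prop: free colimitant} transfers verbatim: for any test $F' \in \dgS$, the square
\begin{center}
\begin{tikzcd}
\Map(\hocolim F_i, F')\ar[r,"\sim"]\ar[d,"\sim"]&\Map(\Xi(\hocolim F_i), F')\ar[d, "f"]\\
\holim\Map(F_i, F')\ar[r,"\sim"]&\holim\Map(\Xi(F_i), F')
\end{tikzcd}
\end{center}
has three sides that are weak equivalences (by full faithfulness of $\Xi$ for the horizontal arrows and by the universal property of the homotopy colimit in $\dgS_\O$ for the left vertical arrow), forcing $f$ to be a weak equivalence. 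Unravelling via the adjunction $U \dashv (\text{free})$, this means precisely $\Phi(\hocolim F_i) \simeq \hocolim \Phi(F_i)$ in $\dgS$. Combining with Proposition \ref{Prop: fixed objects dg-Segal} then yields $\hocolim \Phi(F_i) \simeq F$ in $\dgS$, and the DK-equivalence statement transports as explained above.

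The main technical obstacle is checking that the prescription of $\Gamma$ really lands in $\dgS_\O$ (i.e.\ that the assembled collection of representing complexes defines a dg-Segal space over $\O$) and is natural in its argument; this follows from the representability results of the previous subsection together with the fact that limits in $\dgS_\O$ are computed pointwise from those in $\dgS$ once the object set is fixed. Once $\Gamma$ is in hand, the rest of the argument is a formal consequence of the Yoneda lemma and two-out-of-three.
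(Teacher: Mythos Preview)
Your overall strategy matches the paper's exactly: reduce to Proposition \ref{Prop: fixed objects dg-Segal} by showing that the forgetful functor $\Phi\colon\dgS_\O\to\dgS$ commutes with the relevant homotopy colimit, and establish this via the full faithfulness of $\Xi\colon\dgS_\O\to\coprod_\O\underline{k}/\dgS$ together with the same four-term diagram chase as in Proposition \ref{Prop: free colimitant}.

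The one substantive divergence is your construction of the right adjoint $\Gamma$. You propose to send $(G,\coprod_\O\underline{k}\to G)$ to the dg-Segal space over $\O$ whose morphism complex from $\alpha$ to $\beta$ is the representing object $G(\alpha,\beta)$. The paper instead defines $\Gamma$ \emph{pointwise}: for each $L\in\FreeS$ with object set $\O_L$, one sets $\Gamma(G)(L)$ to be the homotopy pullback of $G(L)\to\coprod_{\O_L}\O$ along $\coprod_{\O_L}G(k)\to\coprod_{\O_L}\O$. This is manifestly a simplicial functor on $\FreeS^{op}$ under $\coprod_\O\underline{k}$, and the identity $\Gamma\Xi=\Id_{\dgS_\O}$ is immediate from the definition.

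Your version has a genuine gap at exactly the point you flag as ``the main technical obstacle.'' The representability results produce complexes $G(\alpha,\beta)$ but no composition law, and there is no evident way to assemble the bare family $\{G(\alpha,\beta)\}_{\alpha,\beta\in\O}$ into a simplicial functor on $\FreeS^{op}$ without essentially reconstructing a dg-category and applying $\Sing$---which would be neither obviously functorial in $G$ nor obviously right adjoint to $\Xi$. Your appeal to ``limits in $\dgS_\O$ are computed pointwise'' does not close this gap: that fact is precisely what the paper's pullback definition exploits directly, whereas your prescription is not a limit construction and so does not benefit from it. Replacing your $\Gamma$ with the paper's pointwise pullback removes the obstacle entirely and the rest of your argument goes through unchanged.
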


\begin{proof}
This proof is almost identical to the one in Proposition \ref{Prop: free colimitant}, and as such we won't spend too much time on its details. As was the case there, we have that the homotopy colimit of the hypercover is DK-equivalent to $F$, but only with fixed objects. So we're going to prove that the forgetful functor $\Phi:\dgS_\O\to \dgS$ commutes with those homotopy colimits. Let us denote the obvious forgetful functor by $\Xi: \dgS_\O\to \coprod \k/\dgS$. 

For all $G\in\dgS$ we then have a morphism of the form
$$\Map(\Phi(F), G)\to \Map(\coprod \k,G)\simeq \prod\Map(\k,G). $$
By getting the fiber of this morphism and doing the same with $\hocolim F_i$, we get the following diagram:
\begin{center}
	\begin{tikzcd}
		\Map(F,G)\ar[r]\ar[d,"\sim"]&\Map(\Xi(F),G)\ar[r]\ar[d, "f"]&\Map(\Phi(F),G)\ar[r]\ar[d]&\prod\Map(\k,G)\ar[d,"="]\\
		\holim\Map(F_i,G)\ar[r]&\holim\Map(\Xi(F_i),G)\ar[r]&\holim\Map(\Phi(F_i),G)\ar[r]&\prod\Map(\k,G)
	\end{tikzcd}
\end{center}

If we can prove that $f$ is a weak equivalence, we have finished. For that, let us prove that $\Xi$ is fully faithful. 

We have an adjunction $\Xi:\Ho(\dgS_\O)\rightleftharpoons \Ho(\coprod \k/\dgS):\Gamma$ where the right adjoint $\Gamma$ is such that for all $H\in\coprod \k/\dgS$ and $L\in\FreeS$, $\Gamma(H)(L)$ is the following homotopy pullback
\begin{center}
	\begin{tikzcd}
		\Gamma(F)(L)\ar[d]\ar[r]\arrow[dr,phantom, "\ulcorner^h", very near start]&F(L)\ar[d]\\
		\coprod_{\O_L} F(k)\ar[r]&\coprod_{\O_L}\O 
	\end{tikzcd}
\end{center}
where $\O_L=\Ob(L)$ is the set of objects of $L$. It is trivial that with such a construction, $\Gamma\Xi=\Id_{\dgS_\O}$. We have then that $\Xi$ is fully faithful and in consequence that $\Map(\Xi,G)$ is a weak equivalence. By the 2-out-of-3 condition, we have that $f$ is a weak equivalence, and so we have that $\Phi$ commutes with colimits. By Proposition \ref{Prop: fixed objects dg-Segal}, we know that $\hocolim(F_i)$ is DK-equivalent to $F$ in $\dgS_\O$, and that means that we have a DK-equivalence between $\hocolim(F_i)$ and $F$ in $\dgS$ too. We have finished the proof.
\end{proof}

Let us see now about $\Sing(T_i)\to \Sing(T)$. 

\begin{lema}Let $F_*\to F$ be a dg-Segal hypercover as constructed before, and let $T_*$ be a simplicial object in $\dg_\O$ with $\O=\pi_0(F(k))$ such that $\Sing(T_*)=F_*$. We define $T=\hocolim T_i$. Then the augmented simplicial object $T_*\to T$ is a $\Free$-hypercover of dg-categories.
\end{lema}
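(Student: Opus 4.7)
The plan is to reduce the statement to the complex-level split-hypercover criterion already established, and then apply Corollary \ref{Coro: split hyper}. Concretely, since $T_*$ takes values in the category of free dg-categories (this is a direct consequence of the inductive construction in Theorem \ref{Th: dg-Segal hypercover}, which at each stage produces a free $A$ and uses coproducts with previously constructed free terms, plus the fact, proved in Proposition \ref{Prop: free hyper}, that free dg-categories are closed under finite coproducts), it is enough to verify that for every pair $x,y\in\O$, the augmented simplicial complex $T_*(x,y)\to T(x,y)$ is a split hypercover in $\Ch$.

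First I would compute the morphism complexes of $T = \hocolim T_i$. By Lemma \ref{Lem: forget commutes}, the forgetful functor $\dg_\O\to Gr(\Ch)_\O$ commutes with $\hocolim$ over $\Delta\op$, so $T(x,y)\simeq\hocolim_i T_i(x,y)$ in $\Ch$. Next I would identify this homotopy colimit with $F(x,y)$. From Lemma \ref{Lem: dg-Segal to complex hyper}, the augmented simplicial complex $T_*(x,y)\to F(x,y)$ is already a split hypercover. Running the argument in the proof of Proposition \ref{Prop: fixed objects dg-Segal} (which uses that split epimorphisms of complexes yield, after $\Map(E,-)$, hypercovers of simplicial sets, and then Proposition \ref{Prop: hypercovers sSet}) shows that $\hocolim_i\Map(E,T_i(x,y))\simeq\Map(E,F(x,y))$ for every $E\in\Ch$. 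By the Yoneda lemma this gives $\hocolim_i T_i(x,y)\simeq F(x,y)$, hence $T(x,y)\simeq F(x,y)$.

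Combining these two steps, the canonical map $T_*(x,y)\to T(x,y)$ is weakly equivalent to the split hypercover $T_*(x,y)\to F(x,y)$ provided by Lemma \ref{Lem: dg-Segal to complex hyper}, so it is itself a split hypercover in $\Ch$. Applying Corollary \ref{Coro: split hyper} for every pair $x,y\in\O$ then yields that $T_*\to T$ is a $\Free$-hypercover in $\dg_\O$, as required.

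The only genuinely delicate point is the interchange step $T(x,y)\simeq\hocolim_i T_i(x,y)$: one needs $\hocolim$ over $\Delta\op$ to commute with evaluation on morphism complexes inside $\dg_\O$, which is precisely what Lemma \ref{Lem: forget commutes} furnishes. Everything else is a direct assembly of results from Section \ref{hypercovers} with the construction of the dg-Segal hypercover from Theorem \ref{Th: dg-Segal hypercover}.
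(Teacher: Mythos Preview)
Your overall strategy coincides with the paper's: reduce via Corollary \ref{Coro: split hyper} to checking that each $T_*(x,y)\to T(x,y)$ is a split hypercover in $\Ch$, invoke Lemma \ref{Lem: dg-Segal to complex hyper} to get the split hypercover $T_*(x,y)\to F(x,y)$, and then identify $T(x,y)\simeq F(x,y)$. You are also more careful than the paper in making the interchange $T(x,y)\simeq\hocolim_i T_i(x,y)$ explicit via Lemma \ref{Lem: forget commutes}; the paper simply writes this as an equality.

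There is, however, a genuine gap in your identification $\hocolim_i T_i(x,y)\simeq F(x,y)$. From the simplicial-set hypercover argument you obtain
\[
\hocolim_i\Map(E,T_i(x,y))\simeq\Map(E,F(x,y))
\]
for every $E$, but this is \emph{not} a Yoneda statement: the left-hand side is a colimit of representable functors, not $\Map(E,\hocolim_i T_i(x,y))$. The canonical comparison map $\hocolim_i\Map(E,T_i(x,y))\to\Map(E,\hocolim_i T_i(x,y))$ is not an equivalence in general, so you cannot conclude $\hocolim_i T_i(x,y)\simeq F(x,y)$ from this alone. The paper avoids this issue by appealing directly to Lemma \ref{Lem: hyper complexes}, whose proof (via Dold-Kan and truncation) establishes in $\Ch$ itself that a split hypercover $E_*\to E$ satisfies $\hocolim_i E_i\simeq E$. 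Replacing your Yoneda step by a citation of Lemma \ref{Lem: hyper complexes} closes the gap and brings your argument in line with the paper's.
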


\begin{proof}
As we have fixed objects on this, by Corollary \ref{Coro: split hyper} if for all $x,y\in\O$, the augmented object $T_*(x,y)\to T(x,y)$ is a split hypercover of complexes, then $T_*\to T$ is a $\Free$-hypercover of dg-categories and we have finished.

Let us fix $x,y\in\O$. By Lemma \ref{Lem: dg-Segal to complex hyper}, $T_*(x,y)\to F(x,y)$ is a split hypercover. But as it is a split hypercover, by Lemma \ref{Lem: hyper complexes} we know that $T(x,y)=\hocolim(T_i(x,y))\simeq F(x,y)$. So the augmented object $T_*(x,y)\to T(x,y)$ is a split hypercover on $\Ch$, and in consequence $T_*\to T$ is a $\Free$-hypercover on $\dg$ and we have finished our proof.
\end{proof}

Now we just need to put everything together.

\begin{teo}\label{ch. 2: essential surjectivity}Let $F$ be a functor that satisfies the dg-Segal conditions. Then there exists a dg-category $T$ such that the morphism $\Sing(T)\to F$ is a DK-equivalence.
\end{teo}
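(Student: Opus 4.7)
The plan is to assemble the two hypercover results established in this section into one statement. Given $F\in\dgS$, set $\O=\pi_0(F(k))$. First I would apply Theorem \ref{Th: dg-Segal hypercover} to obtain a dg-Segal hypercover $F_*\to F$ in $\dgS_\O$ together with a simplicial object $T_*\in\dg_\O$ satisfying $\Sing(T_*)=F_*$. Then I would take $T:=\hocolim_{\Delta\op}T_i$, computed in $\dg$, as the candidate dg-category.

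The strategy is to compare $\Sing(T)$ with $F$ through the common intermediate $\hocolim\Sing(T_i)$. On one side, the lemma immediately preceding this theorem shows that $T_*\to T$ is a $\Free$-hypercover of dg-categories, so Lemma \ref{Lem: hocolim DK} supplies a DK-equivalence $\hocolim\Sing(T_i)\to\Sing(T)$. On the other side, the preceding proposition yields a DK-equivalence $\hocolim F_i\to F$, which, since $F_i=\Sing(T_i)$, is just the map $\hocolim\Sing(T_i)\to F$.

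All three functors are dg-Segal spaces: $F$ by hypothesis, $\Sing(T)$ by Proposition \ref{Prop: Sing is dg-Segal}, and $\hocolim\Sing(T_i)$ by Lemma \ref{ch.2 hocolim dg-Segal}. Hence Remark \ref{Rem: pseudo 2oo3} applies, so composing the right-hand map with a homotopy inverse of the left-hand DK-equivalence produces a morphism $\Sing(T)\to F$ which is itself a DK-equivalence. Since the spaces in the zigzag are fibrant (images of $\Sing$ being fibrant by adjointness, and $\hocolim\Sing(T_i)$ being replaced by a fibrant representative if necessary), such a homotopy inverse may be realized as a genuine morphism, so the zigzag descends to an actual arrow.

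The conceptually substantial content is already packaged in the hypercover machinery developed earlier in this section, and the only delicate step is this final assembly. The point I would guard most carefully is the invocation of two-out-of-three for DK-equivalences: it requires all three vertices of the span to satisfy the dg-Segal conditions, which is exactly what Lemma \ref{ch.2 hocolim dg-Segal} was engineered to supply. Without that lemma one could not move from a span of DK-equivalences to a single DK-equivalence, so that is where I expect the genuine technical weight of the argument to lie, with the rest being a matter of stringing together previously proved facts.
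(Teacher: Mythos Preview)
Your proposal is correct and follows essentially the same route as the paper: build the dg-Segal hypercover $\Sing(T_*)\to F$, set $T=\hocolim T_i$, and use the span $\Sing(T)\xleftarrow{\phi}\hocolim\Sing(T_i)\xrightarrow{\zeta}F$ together with Remark~\ref{Rem: pseudo 2oo3} to conclude. You are in fact slightly more explicit than the paper about why a genuine morphism $\Sing(T)\to F$ exists (via a homotopy inverse of $\phi$ between fibrant objects) and about why Lemma~\ref{ch.2 hocolim dg-Segal} is needed to apply the two-out-of-three property for DK-equivalences; the paper simply draws the triangle and invokes the remark.
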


\begin{proof}
Let $\Sing(T_*)=F_*\to F$ be a dg-Segal hypercover constructed following Theorem \ref{Th: dg-Segal hypercover}. We have then a diagram of this form:
	\begin{center}
		\begin{tikzcd}
			\Sing(\hocolim(T_i))=\Sing(T)\ar[rr]&&F\\
			\hocolim(\Sing(T_i))=\hocolim(F_i)\ar[u,"\phi"]\ar[rru,"\zeta"]
		\end{tikzcd}
	\end{center}	
We have proven that the corresponding augmented object $T_*\to T$ is a $\Free$-hypercover of dg-categories, and by Lemma \ref{Lem: hocolim DK}, the morphism $\phi$ is a DK-equivalence. We have also proven that $\hocolim(F_i)\to F$ is also a DK-equivalence, so by Remark \ref{Rem: pseudo 2oo3}, that means that our morphism $\Sing(T)\to F$ is also a DK-equivalence and we have finished.
\end{proof}

And we are done.

\begin{teo}\label{Th: equivalence}Assume Hypothesis \ref{Hyp: Re-zk} is true. Then, if we take the complete dg-Segal model structure on $\Fun^\S(\FreeS\op,\sS)$, the functor $\Sing: \dg\to \dgSc$ is essentially surjective. Hence there exists an equivalence of categories of the form
$$\Ho(\dg)\to \Ho(\dgSc). $$
\end{teo}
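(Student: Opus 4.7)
The plan is to read this theorem as the natural packaging of the two main technical results already established, namely Theorem \ref{Th: fully faithfulness} (fully faithfulness) and Theorem \ref{ch. 2: essential surjectivity} (construction of a DK-equivalence from a dg-category), with Hypothesis \ref{Hyp: Re-zk} serving as the bridge that converts DK-equivalences into honest weak equivalences in the complete dg-Segal model structure. Since Proposition \ref{complete dg-Segal in Sing} guarantees that $\Sing$ lands in $\dgSc$, the functor $\Sing : \dg \to \dgSc$ is well defined, and I only need to establish essential surjectivity and then observe that fully faithfulness is immediate from what has been proved.

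For essential surjectivity, I would start with an arbitrary complete dg-Segal space $F \in \dgSc$. Since a complete dg-Segal space is in particular a dg-Segal space, Theorem \ref{ch. 2: essential surjectivity} produces a dg-category $T$ together with a morphism $\Sing(T) \to F$ which is a DK-equivalence. By Proposition \ref{complete dg-Segal in Sing}, the source $\Sing(T)$ is itself a complete dg-Segal space, so this morphism compares two objects of $\dgSc$ satisfying the dg-Segal conditions. Invoking Hypothesis \ref{Hyp: Re-zk}, this DK-equivalence is a weak equivalence in the complete dg-Segal model structure, and therefore $\Sing(T) \simeq F$ in $\Ho(\dgSc)$. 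This yields the essential surjectivity of $\Sing : \dg \to \dgSc$, and after passing to homotopy categories, the essential surjectivity of $\Sing : \Ho(\dg) \to \Ho(\dgSc)$.

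For the equivalence of homotopy categories, I invoke Theorem \ref{Th: fully faithfulness}, which under the same hypothesis provides the equivalence $\Re \circ \Sing (T) \simeq T$ for every dg-category $T$. Using the derived adjunction $\Re \dashv \Sing$ from Theorem \ref{Th: construction Sing_w}, this immediately gives, for all $T,T' \in \dg$, a chain of natural isomorphisms
\[
[T,T']_{\Ho(\dg)} \simeq [\Re\Sing(T),T']_{\Ho(\dg)} \simeq [\Sing(T),\Sing(T')]_{\Ho(\dgSc)},
\]
so $\Sing$ is fully faithful. Combining this with essential surjectivity gives an equivalence of categories $\Ho(\dg) \to \Ho(\dgSc)$.

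The only real obstacle is Hypothesis \ref{Hyp: Re-zk} itself, which the paper explicitly leaves unresolved; once it is granted, the present theorem is essentially a formal consequence of the previous sections. A minor point to double-check is that the mapping spaces in $\Ho(\dgSc)$ really agree with those in $\Ho(\Fun^\S(\FreeS\op,\sS))$ for pairs $(\Sing(T),\Sing(T'))$, but this follows because the complete dg-Segal model structure is a left Bousfield localization (so cofibrations are unchanged), both objects $\Sing(T),\Sing(T')$ are fibrant for the complete dg-Segal structure by Proposition \ref{complete dg-Segal in Sing}, and mapping spaces with fibrant target in a left Bousfield localization coincide with those in the underlying model structure.
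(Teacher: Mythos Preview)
Your proposal is correct and follows exactly the approach the paper intends: the paper does not even write out a proof for this theorem (it simply states it after ``And we are done''), treating it as the immediate packaging of Theorem \ref{Th: fully faithfulness}, Theorem \ref{ch. 2: essential surjectivity}, and Hypothesis \ref{Hyp: Re-zk}. Your explicit verification that the homotopy-category mapping spaces agree under the Bousfield localization, and your derivation of fully faithfulness from the counit identity via the adjunction, are the right details to fill in and match the paper's implicit reasoning.
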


\section{Epilogue}

\subsection{Dealing with the hypothesis}\label{sec: hypothesis-solving}

We have thus seen that we can define  complete dg-Segal spaces, give their ambient functor category a model structure such that the complete dg-Segal spaces are its fibrant objects, and define a class of morphisms that would be the weak equivalences for such a model structure. We have also proven that assuming the DK-equivalences to actually be those weak equivalences, we have that complete dg-Segal spaces are a model for dg-categories. 

Now, the proof of that hypothesis is proving to be trickier than we expected. In \cite{ComSegalSpacesREZK}, Rezk gets around this issue by defining a completion functor that takes Segal spaces to complete Segal spaces, and such that the morphism from a Segal space to its completion is always a Dwyer-Kan equivalence. Then, the problem is reduced to proving that a Dwyer-Kan equivalence between two complete Segal spaces is a weak equivalence for the complete Segal model structure, and a relatively straightforward argument gives us the proof.

Unfortunately for us, the completion functor from Rezk's case doesn't work for us. Indeed, one integral part of its definition is the use of exponentials of simplicial sets, for which he uses the cartesian product. We cannot do that, as the lack of compatibility between monoidal and model structure in dg-categories means that the exponential is not well-defined in our case.

There are several possible ways to solve this issue. The best option, of course, would be to define a monoidal structure on the functor category that is compatible with the model structure. Indeed, doing this would not only solve our hypothesis, but also open the door to writing linear versions to classical category theory theorems that cannot be expressed yet. Unfortunately, the obvious answer, the convolution product, wouldn't work here. Indeed, although we can write our desired 
$$\Sing(T)\otimes\Sing(T')=\Sing(T\otimes T'), $$
it is easy to see that $T\otimes T'$ is not a free dg-category. This is, of course, not the only way to define a monoidal structure on a category, and other avenues are being explored. Another less ambitious option would be to abandon the idea of finding a monoidal model structure for the moment, and concentrate on defining a simplicial Hom.

\subsection{Refining the definition of complete dg-Segal spaces}\label{sec: Delta_k}

Although free dg-categories of finite type are enough for our definitions, they are probably not the minimal structure necessary in order to define complete dg-Segal spaces. Indeed, during our time working on this we have considered another, much smaller category, that we have called the linear simplicial category and denoted $\Delta_k$. This category $\D$ would be the full subcategory of $\dg$ consisting of dg-categories with a finite number of objects and such that for every two "consecutive" objects $i$ and $i+1$ the complex between them is concentrated on a single degree, where it is $k^n$ for a certain $n\in\N$. Such a category has several advantages over $\Free$: for one, all its objects are cofibrant for Tabuada's model structure. On top of that, we have that we can find an explicit description of the Hom from one object in $\D$ to a general dg-category, and also a description of weak equivalences in dg-categories via $\D$. Lastly, it seems to be a good candidate for our complete dg-Segal spaces: the image of the linearisation functor is entirely contained in $\D$.

Lastly, although this is still a work in progress, the author is convinced that managing to write complete dg-Segal spaces in terms of $\D$ would give us a relatively easy way to compare complete dg-Segal spaces to Mertens' own model for dg-categories in \cite{Arne-dg-quasi-cat}.

\subsection{Automorphisms of dg-categories}

As we said in the introduction, one of the potential uses of a construction such as the one of complete dg-Segal spaces would be to compute functors from dg-categories to other categories, or, in particular, the automorphisms of dg-categories. In his 2005 article, Toën proves that the group of automorphisms of $\infty$-categories is $\Z/2\Z$, consisting only of the identity and the dualisation functor. We expect the automorphism group of dg-categories to contain $\Z/2\Z$, of course, but not exclusively: at the very least, the group of automorphisms of the ring $k$ will have to be there, and also probably the Brauer group. 

It would also be interesting to take a look at higher homotopy groups of $Aut(\dg)$: we expect the Hochschild homology to appear at some point.

\backmatter

\bibliographystyle{smfalpha}
\bibliography{bibliographie} 

\end{document}